% SIAM Article Template
%\documentclass[onefignum,onetabnum]{siamart171218}
\documentclass[onefignum,onetabnum]{siamonline190516}

% Information that is shared between the article and the supplement
% (title and author information, macros, packages, etc.) goes into
% ex_shared.tex. If there is no supplement, this file can be included
% directly.

%\input{ex_shared}
\usepackage{lipsum}
\usepackage{amsfonts}
\usepackage{graphicx}
\usepackage{epstopdf}
\usepackage{amsmath}
\usepackage{amssymb}
\usepackage[utf8]{inputenc}
\usepackage[english]{babel}
\usepackage{graphicx}
\usepackage{tcolorbox}
\usepackage{pgfplots} 
\usepackage{algorithm}
\ifpdf
  \DeclareGraphicsExtensions{.eps,.pdf,.png,.jpg}
\else
  \DeclareGraphicsExtensions{.eps}
\fi

% Add a serial/Oxford comma by default.

\newcommand{\specificthanks}[1]{\@fnsymbol{#1}}% Inserts a specific \thanks symbol

% Used for creating new theorem and remark environments
\newsiamremark{hypothesis}{Hypothesis}
\crefname{hypothesis}{Hypothesis}{Hypotheses}
\newsiamthm{claim}{Claim}
\newsiamremark{remark}{Remark}
\newsiamremark{assumption}{Assumption}

% Sets running headers as well as PDF title and authors
\headers{Second-order geometric quasilinear hyperbolic PDEs}{G. Dong, M. Hinterm\"uller, Y. Zhang}

\title{A class of second-order geometric quasilinear hyperbolic PDEs and their application  in imaging
\thanks{ Received by the editors September 10, 2020; accepted for publication (in revised form) February 1, 2021.
	\funding{The work of GD and MH is supported by the Deutsche Forschungsgemeinschaft (DFG, German Research Foundation) under Germany's Excellence Strategy – The Berlin Mathematics Research Center MATH+ (EXC-2046/1, project ID: 390685689). 
%	The work of GD is also partially supported by a NSFC project (No. 12001194).  
	The work of YZ is supported by the Guangdong fundamental and applied research fund [No. 2019A1515110971].}
} }
% Authors: full names plus addresses.
\author{Guozhi Dong\footnotemark[3]\,\;\thanks{Weierstrass Institute for Applied Analysis and Stochastics (WIAS),	Mohrenstr. 39, 10117 Berlin, Germany, \email{guozhi.dong@wias-berlin.de}; \email{michael.hintermueller@wias-berlin.de}.}%\,\;\footnotemark[2]
\and Michael Hinterm\"uller\footnotemark[2]\,\;\thanks{Institute for Mathematics, Humboldt University of Berlin, Unter den Linden 6, 10099 Berlin, Germany, \email{guozhi.dong@hu-berlin.de}; \email{hint@hu-berlin.de}.}
%\,\; \footnotemark[1]
\and Ye Zhang\thanks{School of Mathematics and Statistics, Beijing Institute of Technology, 100081 Beijing, China; Shenzhen MSU-BIT University, Longgang District, 518172 Shenzhen, China, \email{ye.zhang@smbu.edu.cn}.}}
\usepackage{amsopn}

% Optional PDF information
\ifpdf
\hypersetup{
  pdftitle={Second-order geometric quasilinear hyperbolic PDEs]{A class of second-order geometric quasilinear hyperbolic PDEs and their application in imaging},
  pdfauthor={Guozhi Dong, Michael Hinterm\"uller, Ye Zhang}
}}
\fi

% The next statement enables references to information in the
% supplement. See the xr-hyperref package for details.

%\externaldocument{ex_supplement}

%
%\newtheorem{theorem}{Theorem}[section]
%\newtheorem{corollary}{Corollary}[section]
%\newtheorem{proposition}[theorem]{Proposition}
%\newtheorem{lemma}[theorem]{Lemma}
%\newtheorem{remark}[theorem]{Remark}

\newcommand{\norm}[1]{\left\| #1 \right\|}
\newcommand{\abs}[1]{ \left\vert #1 \right\vert}
\newcommand{\set}[1]{\left\{ #1 \right\}}
\newcommand{\f}[1]{\mathbf{#1}}

\newcommand{\R}{ \mathbb{R}}

\newcommand{\N}{ \mathbb{N}}

\newcommand{\ud}{u^{d}}

\begin{document}

\maketitle

% REQUIRED
\begin{abstract}
Motivated by important applications in image processing, we study a class of second-order geometric quasilinear hyperbolic partial differential equations (PDEs).
This is inspired by the recent development of second-order damping systems associated to gradient flows for energy decaying. In numerical computations, it turns out that the second-order methods are superior to their first-order counter-parts.
We concentrate on (i) a damped second-order total variation flow for e.g., image denoising, and (ii) a damped second-order mean curvature flow for level sets of scalar functions. The latter is connected to a non-convex variational model capable of correcting displacement errors in image data (e.g. dejittering).
For the former equation, we prove the existence and uniqueness of the solution and its long time behavior, and provide an analytical solution given some simple initial datum. 
For the latter, we draw a connection between the equation and some second-order geometric PDEs evolving the hypersurfaces,
and show the existence and uniqueness of the solution for a regularized version of the equation.
Finally, some numerical comparison of the solution behavior for the new equations with first-order flows are presented.
\end{abstract}

% REQUIRED
\begin{keywords}
Second-order quasilinear hyperbolic equation, total variation flow, mean curvature flow, level set, second-order dynamics, non-smooth and non-convex variational methods
\end{keywords}

% REQUIRED
\begin{AMS}
MSC2020: 35L10, 35L70, 35L72, 35L80, 49K20, 49J52, 65M12, 94A08
\end{AMS}

\section{Introduction}
Total variation flow (TVF) and mean curvature flow for level sets of scalar functions (called level-set MCF in what follows) are important nonlinear evolutionary geometric  partial differential equations (PDEs) which have been of interest in many fields during the last three decades. In the literature, they have been intensively investigated either analytically  \cite{AmbSon96,AndBalCas01,AndCasMaz04,BalCasNov02,CheGigGot91,EvaSpr91} or from a computational viewpoint \cite{BarGarNue08,BarDieNoc18,DecDziEll05,MalSet96,OshSet88}, to name just a few.
In particular, they both find application in imaging science and geometry processing, and they are of common interest to variational and PDE methods in image processing and analysis.
This is due to the fact that an image (or more general data) can be treated as a function defined on a bounded domain in $\R^n$, or more specifically a rectangular domain  $\Omega\subset \R^2$.
This is also the particular focus of the current paper in the applications, where we consider $u:\R^2 \to \R$ as an image function, and $\ud$ as given degraded image data. In our practical context, distorted images are (i) subject to some additive noise in which case $\ud= u+ \delta$, where $u$ denotes the true image, or (ii) corrupted by displacement errors $d:\R^2 \to \R^2$ which gives $\ud=u(x+d(x))$.

The first case is a fundamental problem in image processing and has been continuously and intensively studied from many perspectives. Mathematical methods are also developed from several different points of view, and many of them are based on the well-known Rudin-Osher-Fatemi (ROF) model \cite{RudOshFat92,ChaLio97}, where total variation (TV) is used for removing additive noise from image data. It is associated with a non-smooth energy functional, and it has the beneficial property of preserving the discontinuities (edges) of an image, which are often considered important features.
Accordingly, TVF, the gradient flow of the TV functional, has been studied in this context and also beyond, see for instance \cite{AndBalCas01,AndCasMaz04,BalCasNov02,BarDieNoc18} and the references therein.

Problems with displacement errors have, mathematically, been the subject of several recent studies.
This kind of error is not linearly separable like additive noise but rather it constitutes a nonlinear phenomenon calling for new ideas for correction.
In the literature, studies are mostly focused on specific sub-classes such as, e.g. image dejittering which restricts the error $d:\R \to \R$ to occur on only one direction.
In the work of \cite{LenSch11,DonPatSchOek15,DonSch17}, it is found out that the level-set MCF and some of its variants are capable of correcting displacement errors.
An intuitive understanding is that the displacement errors interrupt the level lines of image functions, and level-set MCF is in fact a minimizing flow for the perimeter of the level lines of the functions.
By setting $u_0=\ud$, the evolution of the level-set MCF produces a regularized solution which remedies the displacement errors in $\ud$.
A proper application of the level-set MCF in this context needs, however, an appropriate stopping.
Similarly, let the initial data be a noisy image, the TVF is able to decrease the total variation of the noisy image, and thus regularize the image when it is, again, properly stopped.

To summarize, both cases mentioned above fall into the common framework:
\begin{equation}
\label{eq:GF}
\dot{w} =-\partial \Phi(w) \quad \text{ and } \quad w(0)=w_0,
\end{equation}
where $\Phi$ is a general convex functional, and $\partial$ denotes the gradient (or subgradient) operator.
Throughout this paper, we will use Newton's notation for the partial derivative with respect to time. For TVF, we can think of $w$ as the evolutionary image function, and $\Phi(w)$ denotes the total variation of $w$, and $w_0=u^d$.
In the context for the level-set MCF, we understand $w$ in \eqref{eq:GF} to be an immersion of a hypersurface representing a level set of a proper function, and $\Phi$ denotes the area functional of the hypersurface, and $w_0$ be level sets of $u^d$.

More recently, second-order dynamics of the following form
\begin{equation}
\label{eq:DF}
\ddot{w}+ \eta(t)\dot{w}=-\partial \Phi(w), \quad w(0)=w_0 \quad \text{ and }\quad \dot{w}(0)=0,
\end{equation}
have been of great interest in the field of (convex) optimization; see, e.g. \cite{SuBoyCan16}. By some of the authors and other colleagues \cite{AttChbHas18,ZhaHof18,BotDonElbSch18}, it has also been applied as regularization methods for solving inverse problems.
The damped second-order dynamics are supposed to be superior to the first-order gradient flows.
The case of $\eta(t)$ being a constant is sometimes called a Heavy-Ball-with-Friction system (HBF) in the literature, see, e.g., in~\cite{Attouch-2000}. This system is an asymptotic approximation of the equation describing the motion of a material point with positive mass, subject to remaining on the graph of $\Phi(w)$,  which moves under the action of the gravity force, and the friction force ($\eta>0$ is the associated friction parameter). The introduction of the inertial term $\ddot{w}(t)$ to the dynamic  system allows to overcome some of the drawbacks of gradient descent methods, such as the well-known zig-zag phenomenon.
However, in contrast to gradient descent methods, the HBF system is not necessarily a descent method for the potential energy $\Phi$.
Instead, it decreases the total energy (kinetic+potential). The damping parameter $\eta$ may control the kinetic part. Larger values of $\eta$ in \eqref{eq:DF} result in more rapid evolution, while smaller values yield \eqref{eq:DF} more wave-like characteristics.
The optimization properties of the HBF system have been intensively studied in \cite{Alvarez-2000,Alvarez-2002,Attouch-2000}, and the references therein.
In particular in the recent works, e.g., \cite{SuBoyCan16,AttCab17,AttCabPeyRed17}, well posedness of the general framework \eqref{eq:DF}, and also the convergence and convergence rates of its solutions have been carefully analyzed, particularly connections to Nestrov's acceleration algorithms \cite{Nes04} have been established and discussed.

Within such a trend, second order PDE dynamics emerge naturally as a topic also in imaging, see for instance the very recent work  \cite{BarSveGulZha19,BenCalSunYez20}. 
We note that the standard theory on HBF does typically not apply to PDEs, particularly, when $\partial \Phi$ gives rise to some nonlinear and nonsmooth partial differential operators.
In our context, however, we are confronted with quasilinear hyperbolic PDEs.
In fact, in this paper we investigate the damped second-order dynamics for both TVF and level-set MCF.
Such PDEs have not been studied in the existing works which mostly focus on the algorithmic aspects.
Our goal is to understand the new equations and their solutions from a theoretical point of view, on the one hand, and to explore their applications in imaging, on the other hand.
In so doing, there are several mathematical challenges to overcome.
First of all, difficulties arising due to the non-linearity and non-smoothness in both the second-order TVF and the second-order level-set MCF have to be addressed.
Second, both the second-order dynamics are of PDEs of quasilinear hyperbolic type, which are in general subtler than first-order ones of parabolic type as the maximum principle is out of reach in the former case.
Moreover, for the level-set MCF, no convex energy associated to the level-set function has been found so far. Consequently, convex analysis techniques can not be applied here.
Compared to TVF, fundamental mathematical questions such as the existence of solutions and also uniqueness of the solution require more efforts, or even need to introduce a new concept for solutions of the second-order level-set MCF.
Therefore, the results of this paper will not only provide novel PDE methods for image processing, but also contribute and propose interesting research questions to the fields of PDE and geometric analysis.

As already sketched, one of the main contributions of this paper is to provide theoretical understanding of the two proposed novel PDEs. More precisely, we prove existence and uniqueness of the solution to the Cauchy problem for the damped second-order TVF. In doing so, we take advantage of the TV energy functional being convex.
We employ the Yosida approximation to show the existence of a solution, and develop an iterative scheme for proving the uniqueness of the solution.
We are able to show that there is no finite extinction time for the second-order TV flow, which is different to its first-order counterpart.
By choosing a simple initial datum, i.e., an eigenfunction of the subgradient of the total variation functional, we are able to derive analytical solutions to the second-order TVF,
which show more intuitive understanding on the behavior of the solution of this equation.
For the damped second-order level-set MCF, we find a connection between the equation and another novel second-order geometric PDE which evolves hypersurfaces.
This provides insight into  the behavior of  solutions of the second-order level-set MCF if we take the hypersurfaces to be the level sets of our function.
The damped second-order level-set MCF is a fully degenerate quasilinear hyperbolic PDE, for which a general theory seems to be elusive at this point.
In the literature, hyperbolic type MCF has been pointed out in \cite{Yau00}, and studied in \cite{GinSva16,HeKonLiu09,LefSmo08}. However, still very little of its analytical properties is known. In the current paper, we study level set formulations of some damped hyperbolic MCFs. We find interesting connections between level-set hyperbolic MCFs and the ones for hypersurfaces, although these are analytically open questions.
As a first step towards a solution concept, we show the existence and uniqueness of the solution to a regularized version of the damped second-order level-set MCF, which is also used in our numerical studies.

In view of applicability, hyperbolic type PDE tools are still new to imaging applications, at least in comparison to parabolic type PDEs. Therefore, this paper provides some new alternative PDE-based methods to imaging problems and opens up some new perspectives.
On the other hand, it is known that the first-order level-set MCF is also a minimizing flow  of the total variation of the initial data, however, it exhibits different behavior than the first-order TVF.
In fact, while the first-order TVF is known to decrease the contrast (height), the first-order level-set MCF shrinks the scale of image features (geometry).
Their second-order counterparts that we study in this work are able to preserve these features. This has been verified in our numerical examples.
We notice that applications of first-order TV flow and MC flow have been extensively explored. In this paper, as a starting point and a proof of concept, we consider two fundamental applications (denoising and correcting displacement errors) for the corresponding second-order flows. In particular, the solution behavior of the second-order MC flow is not trivial as we will explain in Section \ref{sec:mcf}, as well as the second-order TV flow with inhomogeneous initial velocities, due to some of the still open questions on their analytical properties. Therefore their numerical tests are meaningful, since they can serve as evidence for theoretical investigations.
In terms of computational efficiency, in the setting of the current paper, algorithms from second-order equations are numerically superior in comparison with their first-order counterparts.
Another aspect of second-order dynamics which has not been widely studied so far is the impact of the initial velocity. Due to the connection with Nesterov's algorithm, homogeneous initial velocity is often considered as default. However, some of our intuitive result shows that, inhomogeneous initial velocities exhibits some new perspective,  which open new avenues for research beyond this paper.

\begin{table}[h!]

	\begin{center}
		\resizebox{\textwidth}{!}{
		\begin{tabular}{|c|c|}
			\hline
			Notation & Description   \\
			\hline
			$f(x)$ & if there is only spatial variable $x$, then $f:\Omega \to \R^n$  is time-independent  \\
			\hline
			$f(t)$ & if there is only temporal variable $t$, then $f$ maps to either real values in $\R^n$\\
			& or elements in some function spaces that is $f(t)=f(\cdot,t)$ \\
			\hline
			$f(x,t)$ & $f$ is both space and time-dependent \\
			\hline
			$\dot{f}$ \& $\ddot{f}$ &  first-order and second-order  time derivative of $f$, respectively \\
			\hline
			$\nabla f$ & spatial gradient (distributional sense) of function $f$  \\
			\hline
			$\operatorname{div}( \mathbf{v})$ & spatial divergence  (distributional sense) of vector field $\mathbf{v}$  \\
			\hline
			$(\cdot,\cdot)$ &  inner product of two elements in Hilbert spaces (mostly $L^2$ space)\\
			\hline
			$\langle\cdot,\cdot\rangle$ &  inner product of two elements in $\R^n$ \\
			\hline
		\end{tabular}}	
	\caption{Notations and abbreviations.}\label{tab:Notation}
	\end{center}
\end{table}

{\bf Notation}: Table \ref{tab:Notation} summarizes notations and abbreviations which will be frequently used in this paper.
The time dependence of the function $u$, which appears throughout the text, is not explicitly written. We remark that this should be clear in its context.
Similarly, we will often omit the spatial variable $x$ for functions which depend on space and time. Note that the spatial variable $x$ is considered as a pixel of 2-d images in our numerical examples. However, in our theoretical analysis,  $x$ is taken to be a general element in some bounded set of $\R^n$ or the entire  $\R^n$.

The remainder of the paper is structured as follows:
Section \ref{sec:tvf} provides the mathematical analysis of the total variation flows.
Section \ref{sec:mcf} investigates the level sets mean curvature flows.
Section \ref{sec:algorithm} presents an algorithm and the results of numerical comparisons.
A convergence analysis of the algorithm is included in the Appendix.

\section{Total variation flows}
\label{sec:tvf}
\subsection{The first-order total variation flow}
We start by reviewing the first-order total variation flow (TVF) and its corresponding variational method.
Total variation has become a standard tool in mathematical methods for image processing since the final decade of the last century,
which is attributed to the seminal work of Rudin, Osher and Fatemi \cite{RudOshFat92}, who introduced the following nonsmooth variational model for recovering noisy images
\begin{equation}
\label{eq:ROF}
\min_u \frac{1}{2}\int_{\Omega} \abs{u(x)-\ud(x)}^2 dx + \alpha \text{TV}(u).
\end{equation}
Here $\alpha>0$ is a regularization parameter, and $\text{TV}(\cdot)$ is known as the total variation functional. 
Problem \eqref{eq:ROF} is usually referred to as ROF model in the literature.
From a practical point of view,  $\text{TV}(\cdot)$ is preferable in image processing to the standard Tikhonov regularization  (quadratic smooth regularization) because it is able to keep sharp contrast (edges) in the image.

We recall the definition of the total variation functional $\text{TV}(\cdot)$ here. Let $\Omega$ be a compact subset of $\R^n$. For a function $u:\Omega \rightarrow \R$, the total variation is defined as the value
\begin{equation}
\label{eq:tv}
\norm{Du}:=\sup  \left\{\int_{\Omega} u(x)\operatorname{div}(\f v(x))dx: \f v\in  C_0^\infty (\Omega),\; \abs{\f v(x)}\leq 1 \text{ for all } x\in \Omega \right\},
\end{equation}
where $ C_0^\infty (\Omega)$ presents the set of infinitely continuously differentiable functions compactly supported in $\Omega$. The space of functions of bounded variation on $\Omega$, usually denoted by $BV(\Omega)$, is given by
\begin{equation}
\label{BVspace}
BV(\Omega):= \left\{ u\in L^1(\Omega):~ \norm{u}_{BV}<+\infty \right\}, \; \text{ where } \norm{u}_{BV}:=\norm{u}_{L^1}+\norm{Du}.
\end{equation}
It is well known that $BV(\Omega)$ is a Banach space, and the Sobolev space $W_1^1(\Omega)$ is embedded into $BV(\Omega)$.
We recall that for functions $u\in W_1^1(\Omega)$ the total variation is equally characterized by the $L^1$ norm of the spatial gradient of $u$, that is
\begin{equation}
\label{eq:tv2}
\norm{Du}=\int_\Omega \abs{\nabla u(x)}dx,\; \text{ for } u\in W_1^1(\Omega).
\end{equation}
In the following, we shall consider a Hilbert space for the function $u$.
In particular we assume $u\in L^2(\Omega)$ for the purpose of our studies.
Let $\text{TV}(u)$ be the total variation of the function $u\in L^2(\Omega)$, then
\[\text{TV}(u):=\left\{
\begin{array}{ll}
\norm{Du}, & u\in  L^2(\Omega) \bigcap BV(\Omega);\\
\infty ,&    u\in L^2(\Omega)\setminus BV(\Omega).
\end{array}\right.\]
It is not difficult to find that the functional $\text{TV}(\cdot)$ is convex, proper, and lower semi-continuous in the Hilbert space $L^2(\Omega)$.
Note that in the following we always use the gradient notation $\nabla$ instead of $D$ for functions also in $BV(\Omega)$.
Given the following minimization problem
\begin{equation}
\label{eq:min_tv}
\min \left\{ \text{TV}(u): u\in L^2(\Omega)\bigcap BV(\Omega)  \right\},
\end{equation}
recall that the first-order TVF can be expressed as the negative $L^2$ gradient flow for minimizing \eqref{eq:min_tv},
which reads
\begin{equation}\label{eq:tvf}
\left\{\begin{array}{ll}
\dot{u}(t)=\operatorname{div}\left( \frac{\nabla u(t)}{\abs{\nabla u(t)}} \right), \; &\text{ in } \Omega \times (0,\infty)\\
u(0)=u_0, \; &\text{ in } \Omega \times 0.
\end{array}\right.
\end{equation}
Note here  that $-\operatorname{div}\left( \frac{\nabla u}{\abs{\nabla u}}\right)$ is formally identified with an element of the subgradient of $\text{TV}(u)$. It is important to give a sense to \eqref{eq:tvf} as a partial differential ``equation". This was addressed in e.g. \cite{AndCasMaz04}.
The idea there is to introduce  some vector field $p(t)$ as an element in the space $X(\Omega):=\set{p(t)\in L^\infty(\Omega,\R^n): \operatorname{div}(p(t))\in L^2(\Omega)}$ for all $t\in (0,\infty)$. Then the equation \eqref{eq:tvf} is understood in the sense of $\dot{u}(t)=\operatorname{div}(p(t)) $, where $p$ has the form:
\begin{equation}\label{eq:sub_gradient_u}
p(t) = \left\{ \begin{array}{ll}
\frac{\nabla u(t)}{\abs{\nabla u(t)}}, &\text{ if }\quad  \nabla u(t) \neq 0 ;\\
\gamma(t)  \quad \text{ for some } \abs{ \gamma(t)}\leq 1, &\text{ if }\quad  \nabla u(t) = 0,
\end{array}\right.
\end{equation}
which provides a more detailed understanding of \eqref{eq:tvf}.
This also applies later to \eqref{eq:acc_tvf}, one of our target equations in this paper.
For details on the TVF, we refer to, e.g., \cite{AndBalCas01,AndCasMaz04,BalCasNov02}.
There, the existence and uniqueness of solutions problem \eqref{eq:tvf} with Neumann/Dirichelet boundary condition on $\Omega$ are established. Also, the more general case where $\Omega$ is the entire space $\R^n$ was studied.
These developments are mostly motivated by applications in image denoising.
Indeed, setting the initial value $u_0=\ud$, and running the flow stopped at a proper time, yields a regularized image.
Usually the filtering of TVF is less destructive to the edges in images than filtering with a Gaussian, i.e., solving the heat equation with the same initial value $u_0$.

A formal connection between the TVF \eqref{eq:tvf} and the ROF variational model \eqref{eq:ROF} can be drawn as follows.
Given the initial value $u_0$, we consider an implicit time discretization of the TVF \eqref{eq:tvf} using the following iterative procedure:
\begin{equation}\label{eq:time_dis_tvf}
u_m-u_{m-1} \in -\Delta t \partial \text{TV}(u_m), \quad \text{ for } m\in \N,
\end{equation}
where $\partial$ denotes the subdifferential operator.
Identifying the time step $\Delta t$ with the regularization parameter in \eqref{eq:ROF}, that is $\alpha=\Delta t$, we see that \eqref{eq:time_dis_tvf} is in fact the first order optimality condition of \eqref{eq:ROF}. Therefore each iteration in \eqref{eq:time_dis_tvf} can be equivalently approached by solving  \eqref{eq:ROF}, where we take $u=u_m$ and $u^d=u_{m-1}$.

\subsection{The damped second-order total variation flow}
Following the idea of damped second-order dynamics for gradient flows of convex functionals in Hilbert spaces, we introduce the following second-order TVF:
\begin{equation}\label{eq:acc_tvf}
\left\{\begin{array}{ll}
\ddot{u}(t)+ \eta \dot{u}(t) - \operatorname{div}\left( \frac{\nabla u(t)}{\abs{\nabla u(t)}} \right) =0 , & \textmd{in} \ \Omega \times (0,\infty),\\
u(0) = u_0, \quad  \dot{u}(0) = v_0 ,&  \textmd{in} \  \Omega \times 0,\\
\partial_\nu u(t) = 0, & \textmd{on} \  \partial \Omega \times  (0,\infty),
\end{array}\right.
\end{equation}
where $\eta>0$ is the so-called damping parameter, which  is assumed to be a constant, and $\partial \Omega$ denotes the boundary of the domain $\Omega $ which is Lipschitz continuous, $\partial_\nu$ is the normal derivative and  $\nu$ denotes the outward unit normal vector on $\partial \Omega$.

In order to study the resolvability of \eqref{eq:acc_tvf} we consider the following concept for its solutions.
\begin{definition}
	\label{def:Solution_STVF}
	A function $u \in \mathcal{V}:=L^\infty([0,\infty);D(\partial \text{TV}))\cap W_2^{\infty}([0,\infty);L^2(\Omega)) $, is called a weak solution of \eqref{eq:acc_tvf} provided
	\begin{equation}\label{eq:WeakSolu}
	\ddot{u}(t) + \eta \dot{u}(t) + \partial \text{TV}(u(t)) \ni 0 \;\textmd{~for almost every~} \; t\geq 0.
	\end{equation}
	given the initial and boundary conditions in \eqref{eq:acc_tvf}. 
\end{definition}
Note that $W_2^{\infty}([0,\infty);L^2(\Omega))\subset C^1([0,\infty);L^2(\Omega)).$

Before discussing the existence of solutions for \eqref{eq:acc_tvf}, we recall the resolvent operator as well as the Yosida approximation operator for the $\text{TV}$ functional. These are standard tools available in many classic textbooks (see e.g. \cite[Chapter 7]{Bre10}):
\begin{definition}
	\label{def:Yosida}
	\begin{itemize}
		\item[(i)] The resolvent operator $J_{\lambda}:L^2(\Omega) \to D(\partial \text{TV})$ is defined by $J_{\lambda}(w):=u$, where $u\in BV(\Omega)$ is the unique solution of
		\[u+\lambda \partial \text{TV}(u) \ni w.\]
		\item[(ii)] The Yosida approximation operator $A_{\lambda}:L^2(\Omega) \to L^2(\Omega) $ is defined as
		\[A_{\lambda}(w):=(w-J_{\lambda}(w))/\lambda.\]
	\end{itemize}
\end{definition}

The operators $J_{\lambda}$ and $A_{\lambda}$ have the following properties (see also  \cite[Chapter 7]{Bre10}):
\begin{proposition}
	\label{Prop:Yosida}
	\begin{enumerate}
		\item[(i)] For any fixed $\lambda>0$, $A_{\lambda}$ is a Lipschitz continuous mapping, i.e.
		\[\|A_{\lambda}(w_1)-A_{\lambda}(w_2)\|\leq \frac{2}{\lambda} \|w_1 - w_2\|\; \text{ for all } \;w_1,w_2\in L^{2}(\Omega) .\]
		\item[(ii)] $A_{\lambda}$ is a monotone operator, i.e. $(A_{\lambda}(w_1)-A_{\lambda}(w_2), w_1 - w_2)\geq0$ for all $w_1,w_2\in L^{2}(\Omega) $.
		\item[(iii)] $A_{\lambda}(w)\in \partial \text{TV}(J_{\lambda}(w))$ for all $w\in L^{2}(\Omega) $.
		\item[(iv)] For all $w\in D(\partial \text{TV}) $:
		\begin{equation}\label{supA}
		\sup_{\lambda>0} \|A_{\lambda}(w)\| \leq \left| (\partial \text{TV})^0 (w)\right| := \min_{v\in \partial \text{TV}(w)} \|v\|.
		\end{equation}
		\item[(v)] For every $w\in L^2(\Omega)$:
		\[\lim_{\lambda\to 0} J_\lambda(w)=w.\]
	\end{enumerate}
\end{proposition}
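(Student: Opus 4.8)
The plan is to derive all five items from the single structural fact that $\partial TV$ is a maximal monotone operator on the Hilbert space $L^2(\Omega)$. Since $TV(\cdot)$ is proper, convex and lower semicontinuous on $L^2(\Omega)$ (as already noted in the text), Rockafellar's theorem guarantees that $\partial TV$ is maximal monotone; by Minty's theorem the range of $I + \lambda\partial TV$ is all of $L^2(\Omega)$ for every $\lambda>0$, and strong monotonicity of $I+\lambda\partial TV$ forces its inverse to be single-valued. This is exactly what makes Definition \ref{def:Yosida}(i) meaningful, and it shows $J_\lambda$ is defined on all of $L^2(\Omega)$ with values in $D(\partial TV)\subset L^2(\Omega)\cap BV(\Omega)$. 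The elementary estimate to establish first is that $J_\lambda$ is firmly nonexpansive: writing $u_i=J_\lambda(w_i)$ one has $(w_i-u_i)/\lambda\in\partial TV(u_i)$, so monotonicity of $\partial TV$ gives $\bigl((w_1-u_1)-(w_2-u_2),\,u_1-u_2\bigr)\geq 0$, i.e. $\norm{u_1-u_2}^2\leq (w_1-w_2,\,u_1-u_2)$; in particular $\norm{J_\lambda w_1-J_\lambda w_2}\leq\norm{w_1-w_2}$.

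From here the items are short. For (iii), the inclusion $(w-J_\lambda(w))/\lambda\in\partial TV(J_\lambda(w))$ is just a restatement of $w\in J_\lambda(w)+\lambda\partial TV(J_\lambda(w))$, the defining relation of the resolvent, so $A_\lambda(w)\in\partial TV(J_\lambda(w))$. For (i), using $A_\lambda=(I-J_\lambda)/\lambda$ and the triangle inequality, $\norm{A_\lambda w_1-A_\lambda w_2}\leq\lambda^{-1}\bigl(\norm{w_1-w_2}+\norm{J_\lambda w_1-J_\lambda w_2}\bigr)\leq(2/\lambda)\norm{w_1-w_2}$ (firm nonexpansiveness of $J_\lambda$ in fact yields the sharper constant $1/\lambda$, but $2/\lambda$ is all that is claimed). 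For (ii), $\lambda(A_\lambda w_1-A_\lambda w_2,\,w_1-w_2)=\norm{w_1-w_2}^2-(J_\lambda w_1-J_\lambda w_2,\,w_1-w_2)\geq 0$, since firm nonexpansiveness together with Cauchy--Schwarz gives $(J_\lambda w_1-J_\lambda w_2,\,w_1-w_2)\leq\norm{J_\lambda w_1-J_\lambda w_2}\norm{w_1-w_2}\leq\norm{w_1-w_2}^2$.

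For (iv), fix $w\in D(\partial TV)$ and any $v\in\partial TV(w)$. Then $w+\lambda v\in(I+\lambda\partial TV)(w)$, hence $w=J_\lambda(w+\lambda v)$, and nonexpansiveness of $J_\lambda$ gives $\norm{J_\lambda(w)-w}=\norm{J_\lambda(w)-J_\lambda(w+\lambda v)}\leq\lambda\norm{v}$, so $\norm{A_\lambda(w)}\leq\norm{v}$ uniformly in $\lambda$; since $\partial TV(w)$ is nonempty, closed and convex, the minimal-norm element $(\partial TV)^0(w)$ exists and taking the infimum over $v$ yields \eqref{supA}. Finally for (v): if $w\in D(\partial TV)$, item (iv) gives $\norm{J_\lambda(w)-w}=\lambda\norm{A_\lambda(w)}\leq\lambda\,\abs{(\partial TV)^0(w)}\to 0$ as $\lambda\to 0$. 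For general $w\in L^2(\Omega)$ one uses that the effective domain $D(TV)=L^2(\Omega)\cap BV(\Omega)$ is dense in $L^2(\Omega)$ (it contains $C_0^\infty(\Omega)$) and that $\overline{D(\partial TV)}=\overline{D(TV)}$: given $\varepsilon>0$ pick $\tilde w\in D(\partial TV)$ with $\norm{\tilde w-w}<\varepsilon$ and estimate $\norm{J_\lambda w-w}\leq\norm{J_\lambda w-J_\lambda\tilde w}+\norm{J_\lambda\tilde w-\tilde w}+\norm{\tilde w-w}\leq 2\varepsilon+\norm{J_\lambda\tilde w-\tilde w}$, whence $\limsup_{\lambda\to 0}\norm{J_\lambda w-w}\leq 2\varepsilon$, and $\varepsilon$ is arbitrary.

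The main point is that there is no genuine obstacle here: once maximal monotonicity of $\partial TV$ on $L^2(\Omega)$ is secured, everything reduces to abstract monotone operator theory in Hilbert space (cf.\ \cite[Chapter 7]{Bre10}). The only inputs truly special to $TV$ are the already-recorded facts that $TV$ is proper, convex and lower semicontinuous on $L^2(\Omega)$, and that its effective domain $L^2(\Omega)\cap BV(\Omega)$ is dense in $L^2(\Omega)$ (needed only for (v)); the rest is formal manipulation of the identity $A_\lambda=(I-J_\lambda)/\lambda$ together with the firm nonexpansiveness of $J_\lambda$.
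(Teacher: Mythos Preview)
Your proof is correct and follows exactly the standard monotone-operator route that the paper invokes. Note that the paper does not actually give its own proof of this proposition: it simply records the statement and refers the reader to \cite[Chapter~7]{Bre10}. Your argument is precisely the one found there---firm nonexpansiveness of $J_\lambda$ from monotonicity of $\partial TV$, then (i)--(iv) by direct manipulation of $A_\lambda=(I-J_\lambda)/\lambda$, and (v) by the density argument---so there is no methodological difference to report.
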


We make use of the following lemma from \cite[Proposition 1.10]{AndCasMaz04}.
\begin{lemma}
	\label{lem:homogenous}
	Let  $u\in D(\partial \text{TV})$, and $\beta \in \partial \text{TV}(u)$. Then,
	\[(\beta, u)=\text{TV}(u).\]
\end{lemma}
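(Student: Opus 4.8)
The plan is to exploit the positive $1$-homogeneity of the $TV$ functional, namely $TV(su)=s\,TV(u)$ for all $s\geq 0$, together with the subgradient inequality. Recall that $\beta\in\partial TV(u)$ means
\[
TV(v)\geq TV(u)+(\beta,v-u)\qquad\text{for all }v\in L^2(\Omega)\cap BV(\Omega).
\]
First I would test this inequality with $v=su$ for $s\geq 0$. Using homogeneity, this gives $s\,TV(u)\geq TV(u)+(s-1)(\beta,u)$, i.e. $(s-1)\bigl(TV(u)-(\beta,u)\bigr)\geq 0$ for every $s\geq 0$. Taking $s>1$ forces $TV(u)-(\beta,u)\geq 0$, while taking $s\in[0,1)$ forces $TV(u)-(\beta,u)\leq 0$. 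Combining the two inequalities yields $(\beta,u)=TV(u)$, which is the claim.

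The only point that requires a little care is the justification of the homogeneity $TV(su)=s\,TV(u)$ on all of $L^2(\Omega)$ (including the extended-real-valued convention), which is immediate from the definition \eqref{eq:tv}: replacing $u$ by $su$ simply pulls the factor $s\geq 0$ out of the supremum, and if $u\notin BV(\Omega)$ then $su\notin BV(\Omega)$ for $s>0$ so both sides are $+\infty$; for $s=0$ both sides vanish. One should also check that $su\in L^2(\Omega)\cap BV(\Omega)$ whenever $u\in D(\partial TV)\subset L^2(\Omega)\cap BV(\Omega)$, so that the subgradient inequality may legitimately be evaluated at $v=su$; this is clear since $L^2(\Omega)\cap BV(\Omega)$ is a linear space.

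I do not anticipate a genuine obstacle here: the argument is the standard fact that for a proper, convex, lower semicontinuous, positively $1$-homogeneous functional, any subgradient $\beta$ at $u$ satisfies $(\beta,u)=TV(u)$ (equivalently, $TV$ is the support functional of the closed convex set $\partial TV(0)$, and $\partial TV(u)\subset\partial TV(0)$ with the pairing attaining the supremum at $u$). The proof above via two choices of the scalar $s$ is the most elementary route and avoids invoking the support-functional characterization, so it is the one I would write out.
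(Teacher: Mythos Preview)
Your argument is correct and is exactly the standard proof that homogeneity of degree one forces $(\beta,u)=TV(u)$: testing the subgradient inequality at $v=su$ and letting $s$ vary above and below $1$ gives both inequalities. The paper does not supply its own proof of this lemma; it merely cites \cite[Proposition~1.10]{AndCasMaz04} and remarks immediately afterwards that the result holds for any convex functional homogeneous of degree~$1$, which is precisely the mechanism you exploit. So your proposal matches the intended argument.
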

Note that this is a special case of a general result which states that the above equality still holds true for an arbitrary  convex functional homogeneous of degree $1$ besides the $\text{TV}$ functional.
Having these results at hand, we proceed with proving the existence of a solution to \eqref{eq:acc_tvf}.
\begin{theorem}
	\label{thm:existence_TVF}
	Given $u_0 \in D(\partial TV)$, $v_0\in  D(\partial \text{TV})\cap L^2(\Omega) $ there exists a solution of \eqref{eq:acc_tvf} in the sense of Definition \ref{def:Solution_STVF}.
\end{theorem}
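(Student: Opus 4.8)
The plan is to use the classical Yosida–approximation scheme for second-order evolution equations driven by a maximal monotone operator. For $\lambda>0$ replace $\partial TV$ by its single-valued Yosida approximation $A_\lambda$ and consider the regularized Cauchy problem
\[
\ddot u_\lambda(t)+\eta\dot u_\lambda(t)+A_\lambda(u_\lambda(t))=0,\qquad u_\lambda(0)=u_0,\quad \dot u_\lambda(0)=0.
\]
Since $A_\lambda$ is globally Lipschitz on $L^2(\Omega)$ by Proposition \ref{Prop:Yosida}(i), rewriting this as a first-order system for $(u_\lambda,\dot u_\lambda)$ on the Hilbert space $L^2(\Omega)\times L^2(\Omega)$ and applying the Cauchy--Lipschitz (Picard--Lindel\"of) theorem in Banach spaces yields a unique global solution $u_\lambda\in C^2([0,\infty);L^2(\Omega))$.

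The heart of the argument is a pair of a priori bounds on $u_\lambda$ that are uniform in $\lambda$. First, I would test the regularized equation with $\dot u_\lambda$; since $A_\lambda$ is the gradient of the convex, $C^1$ Moreau--Yosida regularization $\Phi_\lambda$ of $TV$ (cf. \cite[Chapter 7]{Bre10}), this gives the energy identity $\frac{d}{dt}\big(\frac12\|\dot u_\lambda(t)\|^2+\Phi_\lambda(u_\lambda(t))\big)=-\eta\|\dot u_\lambda(t)\|^2$, and from $\dot u_\lambda(0)=0$, $\Phi_\lambda\ge 0$ and $\Phi_\lambda(u_0)\le TV(u_0)$ one obtains $\|\dot u_\lambda(t)\|\le\sqrt{2\,TV(u_0)}$, $\eta\int_0^\infty\|\dot u_\lambda\|^2\,dt\le TV(u_0)$, and (using $TV(J_\lambda(w))\le\Phi_\lambda(w)$) $TV(J_\lambda(u_\lambda(t)))\le TV(u_0)$, all uniformly in $\lambda$ and $t$. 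Second, and decisively, I would bound $\ddot u_\lambda$, equivalently $A_\lambda(u_\lambda)$, uniformly in $\lambda$: here the hypothesis $u_0\in D(\partial TV)$ enters through Proposition \ref{Prop:Yosida}(iv), which gives $\|\ddot u_\lambda(0)\|=\|A_\lambda(u_0)\|\le|(\partial TV)^0(u_0)|$ independently of $\lambda$. Differentiating the regularized equation in $t$ (legitimate, since $t\mapsto A_\lambda(u_\lambda(t))$ is Lipschitz, hence differentiable a.e.), testing with a suitable combination of $\dot u_\lambda$ and $\ddot u_\lambda$, and using the monotonicity of $A_\lambda$ (Proposition \ref{Prop:Yosida}(ii), which forces $(\frac{d}{dt}A_\lambda(u_\lambda),\dot u_\lambda)\ge 0$ a.e.) together with the $1$-homogeneity identity $(A_\lambda(w),w)=TV(J_\lambda(w))+\lambda\|A_\lambda(w)\|^2$ — a consequence of Lemma \ref{lem:homogenous} applied to $\beta=A_\lambda(w)\in\partial TV(J_\lambda(w))$ — one propagates the bound on the data forward in time and gets $\sup_{t\ge 0}\|\ddot u_\lambda(t)\|\le C$, hence $\sup_{t\ge 0}\|A_\lambda(u_\lambda(t))\|\le C$, uniformly in $\lambda$; in particular $\|J_\lambda(u_\lambda(t))-u_\lambda(t)\|=\lambda\|A_\lambda(u_\lambda(t))\|\to 0$ uniformly on bounded time intervals.

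The last step is the passage to the limit $\lambda\to 0$. From the uniform bounds, along a subsequence $u_\lambda$ converges weakly-$*$ to some $u$ in $W^{2,\infty}((0,T);L^2(\Omega))$ for every $T$; combining the uniform $BV$-bound on $J_\lambda(u_\lambda(t))$ with the uniform Lipschitz-in-time bound, an Aubin--Lions / Arzel\`a--Ascoli argument gives strong convergence $u_\lambda\to u$ in $C([0,T];L^2(\Omega))$, whence also $J_\lambda(u_\lambda)\to u$ strongly; moreover $A_\lambda(u_\lambda)=-\ddot u_\lambda-\eta\dot u_\lambda\rightharpoonup\xi$ in $L^2((0,T);L^2(\Omega))$ with $\ddot u+\eta\dot u+\xi=0$. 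Since $A_\lambda(u_\lambda)\in\partial TV(J_\lambda(u_\lambda))$ (Proposition \ref{Prop:Yosida}(iii)), the strong convergence of $J_\lambda(u_\lambda)$, the weak convergence of $A_\lambda(u_\lambda)$, and the maximal monotonicity (hence strong--weak closedness of the graph) of $\partial TV$ give $\xi(t)\in\partial TV(u(t))$ for a.e. $t$, i.e. $\ddot u+\eta\dot u+\partial TV(u)\ni 0$; the uniform second-derivative bound also yields $u(t)\in D(\partial TV)$ with $\sup_t|(\partial TV)^0(u(t))|<\infty$ and, through the equation, $u\in C^2((0,\infty);L^2(\Omega))$, so $u\in\mathcal V$ is a strong solution in the sense of Definition \ref{def:Solution_STVF}. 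The initial conditions pass to the limit by the strong convergence in $C([0,T];L^2(\Omega))$ and by the uniform bound on $\ddot u_\lambda$.

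I expect the second a priori estimate to be the main obstacle: the energy identity alone controls only $\lambda\|A_\lambda(u_\lambda(t))\|^2$, which degenerates as $\lambda\to 0$, so one genuinely has to differentiate the regularized equation and play the monotonicity of $A_\lambda$ off against the damping $\eta>0$ in order to carry the initial bound $\|A_\lambda(u_0)\|\le|(\partial TV)^0(u_0)|$ forward in time; the compactness and the demiclosedness of $\partial TV$ used in the limit, by contrast, are routine.
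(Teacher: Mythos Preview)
Your overall architecture---Yosida regularization, global existence for the regularized problem via Cauchy--Lipschitz, an energy identity giving uniform bounds on $\dot u_\lambda$ and on $TV(J_\lambda u_\lambda)$, then compactness plus the strong--weak closedness of $\partial TV$ to pass to the limit---is exactly the paper's strategy, and those parts of your outline are correct.

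The genuine gap is precisely where you locate it: the uniform-in-$\lambda$ bound on $\|A_\lambda(u_\lambda(t))\|$ (equivalently on $\|\ddot u_\lambda(t)\|$). Your plan is to differentiate the regularized equation in time and exploit $(\tfrac{d}{dt}A_\lambda(u_\lambda),\dot u_\lambda)\ge 0$. This is the device that works for first-order evolutions $\dot u+A_\lambda(u)=0$, where testing the differentiated equation with $\dot u$ immediately yields $\tfrac{d}{dt}\|\dot u\|^2\le 0$. For the second-order equation the differentiated relation is $\dddot u_\lambda+\eta\ddot u_\lambda+\tfrac{d}{dt}A_\lambda(u_\lambda)=0$; testing with $\dot u_\lambda$ leaves an uncontrolled $-\|\ddot u_\lambda\|^2$ after integrating $(\dddot u_\lambda,\dot u_\lambda)$ by parts, while testing with $\ddot u_\lambda$ produces the cross term $(\tfrac{d}{dt}A_\lambda(u_\lambda),\ddot u_\lambda)$, which carries no sign from monotonicity. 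No linear combination of these two tests closes, and the $1$-homogeneity identity only delivers $\lambda\|A_\lambda(u_\lambda)\|^2$ on the favorable side, which degenerates as $\lambda\to 0$. In short, convexity/monotonicity alone do not seem to propagate the initial bound $\|A_\lambda(u_0)\|\le|(\partial TV)^0(u_0)|$ through a hyperbolic equation the way they do through a parabolic one; your proposal names the obstacle but does not remove it.

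The paper obtains this bound by a different, essentially static, TV-specific argument rather than a dynamical one: it argues by contradiction, normalizes $d_k:=A_\lambda(u_\lambda(t_k))/\|A_\lambda(u_\lambda(t_k))\|$, approximates it in $L^2$ by a smooth $\tilde d_k\in C_0^\infty(\Omega)$, and feeds $w=J_\lambda(u_\lambda(t_k))+\tilde d_k$ into the subgradient inequality together with the triangle inequality for $TV$ to get $\|A_\lambda(u_\lambda(t_k))\|\le 1+TV(u_0)+TV(\tilde d_k)$. So the crucial bound comes from the structure of the $TV$ subdifferential (positive $1$-homogeneity and sublinearity), not from differentiating the flow. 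If you want to salvage your route, you would need an additional ingredient beyond monotonicity---for instance, an a priori $L^\infty$ bound on elements of $\partial TV$ restricted to the sublevel sets produced by the energy estimate---which is effectively what the paper's contradiction argument supplies.
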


\begin{proof}
	We first consider the following approximate problem with fixed $\lambda>0$:
	\begin{equation}\label{approximatePro}
	\left\{\begin{array}{ll}
	\ddot{u}_{\lambda}(t) + \eta \dot{u}_{\lambda}(t) + A_{\lambda}(u_{\lambda}(t)) =0 ,& \textmd{in} \  \Omega \times (0,\infty),\\
	u_{\lambda}(0) =u_0 , \quad \dot{u}_{\lambda}(0) = v_0 ,&  \textmd{in} \  \Omega \times 0,\\
	\partial_\nu u_\lambda(t)=0 ,& \textmd{on} \  \partial \Omega \times  (0,\infty).
	\end{array}\right.
	\end{equation}
	For simplicity, denote $H=L^{2}(\Omega) $, and introduce the space $H\times H$ with scalar product
	\[([u_1,v_1]^\top,[u_2,v_2]^\top)_{H\times H} = (u_1,u_2)+(v_1,v_2),\]
	and the corresponding norm $\|[u,v]^\top\|_{H\times H}=\sqrt{\|u\|^2+\|v\|^2}$.
	Note that in the following proof, if there is no specification, then $\norm{\cdot}$ always means the $H$ norm.
	Now, we define $v_{\lambda}(t)=\dot{u}_{\lambda}(t)$ and $\mathbf{z}(t)=[u_{\lambda}(t),v_{\lambda}(t)]^\top$, and then, rewrite \eqref{approximatePro} as a first-order dynamical  system in the phase space $H\times H$, i.e.
	\begin{equation}\label{vectorform}
	\left\{\begin{array}{ll}
	\dot{\mathbf{z}}(t) = F(\mathbf{z}(t)) & \textmd{in } \ \Omega \times (0,\infty),\\
	\mathbf{z}(0) = [u_0,v_0]^\top & \textmd{in } \  \Omega \times  0,
	\end{array}\right.
	\end{equation}
	where $F(\mathbf{z}(t))=[v_{\lambda}(t), - \eta v_{\lambda}(t) - A_{\lambda}(u_{\lambda}(t))]^\top$.
	
	We show first that $F$ is Lipschitz continuous for every fixed $\lambda>0$.
	This is true by using the Lipschitz continuity of the Yosida approximation operator $A_{\lambda}$, and we have the following inequalities
	\begin{eqnarray*}
		&& \| F(\mathbf{z}_1(t)) - F(\mathbf{z}_2(t)) \|_{H\times H} \\
		&& = \sqrt{\|v_{\lambda,1}(t)-v_{\lambda,2}(t)\|^2 + \|\eta (v_{\lambda,1}(t)-v_{\lambda,2}(t)) + (A_{\lambda}(u_{\lambda,1}(t))-A_{\lambda}(u_{\lambda,2}(t)))\|^2} \\
		&& \leq \sqrt{(1+2\eta^2)\|v_{\lambda,1}(t)-v_{\lambda,2}(t)\|^2 + \frac{8}{\lambda^2} \|u_{\lambda,1}(t)-u_{\lambda,2}(t)\|^2}  \\
		&& \leq \sqrt{1+2\eta^2 + \frac{8}{\lambda^2}} \left\| [u_{\lambda,1}(t),v_{\lambda,1}(t)]^\top - [u_{\lambda,2}(t),v_{\lambda,2}(t)]^\top \right\|_{H\times H}.
	\end{eqnarray*}
		
	The existence and uniqueness of the solution of \eqref{vectorform} follow from the Cauchy-Lipschitz-Picard (CLP) theorem (see e.g. \cite{Bre10}) for first-order dynamical systems.
	In particular we can infer that
	\[u_\lambda \in C^1([0,\infty);H) \cap L^\infty([0,\infty); D(\partial \text{TV})),\; \text{ and } \; \dot{u}_{\lambda}\in C^1([0,\infty);H),\]
	and $\dot{u}_{\lambda}\in C^1([0,\infty);H)$ indicates that $u_{\lambda}\in C^2([0,\infty);H)$.
	
	In the remaining part of the proof, we show that as $\lambda\to0^+$ the function sequence $(u_{\lambda})_\lambda$ converges to a
	solution of problem \eqref{eq:acc_tvf} in the sense of Definition \ref{def:Solution_STVF}.
	We prove this by the following steps.
	
	\textbf{Step 1}. Show that $\dot{u}_\lambda \in L^2([0,\infty);H)$.
	
	According to the definition of $J_{\lambda} $ and the assumption $u_0\in D(\partial \text{TV})$, we have
	\begin{equation}
	\label{boundedTVu0}
	\text{TV}(J_{\lambda}(u_0)) \leq \text{TV}(u_0) <\infty.
	\end{equation}
	
	Define first a Lyapunov function of the differential equation \eqref{approximatePro}, that is
	\begin{equation}
	\label{LyapnunovE}
	\mathcal{E}_\lambda(t) := \frac{1}{2} \norm{\dot{u}_\lambda(t)}^2 + \text{TV}(J_{\lambda}(u_\lambda(t))),
	\end{equation}
and similarly $\mathcal{E}(t) := \frac{1}{2} \norm{\dot{u}(t)}^2 + \text{TV}(u_\lambda(t))$.
It is not difficult to show that
	\begin{equation}
	\label{Decreasing1}
	\dot{\mathcal{E}}_\lambda(t) = - \eta \|\dot{u}_\lambda(t)\|^2
	\end{equation}
	by considering  \eqref{approximatePro}.
	Integrating both sides in \eqref{Decreasing1}, we obtain
	\begin{equation}
	\label{eq:derivative_bound1}
	\int^{\infty}_{0} \|\dot{u}_\lambda(t)\|^2 dt \leq  \mathcal{E}_\lambda(0) / \eta =  \text{TV}(J_{\lambda}(u_0))/\eta + \frac{\norm{v_0}^2_{L^2(\Omega)} }{2\eta}\leq \mathcal{E}(0)/\eta  <  \infty,
	\end{equation}
	which yields $\dot{u}_\lambda  \in L^2([0,\infty),H)$ for all $\lambda\geq 0$.
	
	\textbf{Step 2}. Prove that both $u_\lambda , \dot{u}_\lambda  \in L^\infty([0,\infty),H)$ are uniformly bounded.
	
	Since we consider $\Omega$ bounded, according to Poincare's inequality, see e.g. \cite[Theorem 3.47 and Remark 3.50]{Ambrosio2000}\footnote{The estimate in \eqref{eq:SobolevIneq} holds in the case that $\Omega$ is a bounded set in $\R^n$ of all integer $n>0$, but not for the whole $\R^n$ in general (e.g., when $n>2$). That means similar estimate in \eqref{eq:SobolevIneq} might not hold true for functions of which the domain is the whole space $\R^n$. }, a constant $C$ independent of $\lambda$ exists such that
	\begin{equation}
	\label{eq:SobolevIneq}
	\|J_\lambda(u_\lambda)(t)\| \leq C \cdot \text{TV}(J_\lambda(u_\lambda)(t)).
	\end{equation}
	
	On the other hand, according to the assertion $(\text{v})$ in Proposition  \ref{Prop:Yosida}, a constant $\lambda_1$ exists such that for all $\lambda\in(0,\lambda_1]$,
	\begin{equation}
	\label{DiffJlambda1}
	\|J_\lambda(u_\lambda(t)) - u_\lambda(t)\| \leq 1 \quad \text{ for all } t\in (0,\infty).
	\end{equation}
	Note that it follows from \eqref{Decreasing1} that $\mathcal{E}_{\lambda}$ is a non-increasing function. Thus, we obtain together with \eqref{eq:SobolevIneq} and \eqref{DiffJlambda1} that
	\begin{eqnarray*}
		&& \|u_\lambda(t)\| \leq \|J_\lambda(u_\lambda(t))\| + 1 \leq C\cdot \text{TV}(J_\lambda(u_\lambda(t))) + 1 \\ && \qquad
		\leq C \left( \text{TV}(J_\lambda(u_\lambda(t))) + \frac{1}{2}\|\dot{u}_\lambda(t)\|^2  \right) + 1 = C \mathcal{E}_\lambda(t) + 1 \leq C \mathcal{E}(0)  + 1 <  \infty,
	\end{eqnarray*}
	which yields $u_\lambda \in L^\infty([0,\infty);H)$ for all $\lambda\in(0,\lambda_1]$.
	
	The uniform boundedness of $\dot{u}_\lambda $ follows from the following inequality:
	\begin{eqnarray*}
		\|\dot{u}_\lambda(t)\|^2 \leq 2\text{TV}(J_\lambda(u_\lambda(t))) + \|\dot{u}_\lambda(t)\|^2 = 2\mathcal{E}_\lambda(t) \leq 2 \mathcal{E}(0) <  \infty.
	\end{eqnarray*}

	\textbf{Step 3}. Show that $A_{\lambda}(u_\lambda), \ddot{u}_\lambda \in L^\infty([0,\infty);H)$ are both uniformly bounded.
	
	We have shown in Step 2 that $J_\lambda(u_\lambda)\in L^\infty([0,\infty);H)$ is uniformly bounded for all $\lambda\in (0,\lambda_1]$.
 	Now we  show, by contradiction, that $A_{\lambda}(u_\lambda)\in L^\infty([0,\infty);H)$ is uniformly bounded. Assume that there exists $(\lambda_k)_k\in (0,\lambda_1]$ such that $\left(A_{\lambda_k}(u_{\lambda_k}(t))\right)_{k\in \N}$ is an unbounded sequence as $\lambda_k\to 0$ for some $t\in (0,\infty)$, i.e., $\lim_{k\to\infty} \|A_{\lambda_k}(u_{\lambda_k}(t))\| = \infty$.  
	 Now we consider the sequence $\left( J_{\lambda_k}(u_{\lambda_k})\right)_{k\in \N}$ corresponding to $\left(A_{\lambda_k}(u_{\lambda_k})\right)_{k\in \N}$.
	Note that there must exist a subsequence such that the elements of $\left( J_{\lambda_k}(u_{\lambda_k}(t))\right)_{k\in \N}$ are not constant functions, otherwise, since $A_{\lambda_k}(u_{\lambda_k}(t))=\partial \text{TV}(J_{\lambda_k}(u_{\lambda_k}(t)) )$ (Proposition \ref{Prop:Yosida} $iii$), we get $A_{\lambda_k}(u_{\lambda_k}(t))\equiv 0$ which is already a contradiction. We  consider in particular this subsequence and use the same notation.	
	Now, let us define $d_{k}(t):= A_{{\lambda_k}}(u_{\lambda_k}(t))/ \norm{A_{\lambda_k}(u_{\lambda_k}(t))} $ and its smooth approximation $\tilde{d}_k(t)\in  C_0^\infty(\Omega) $, such that for arbitrary $\epsilon\in (0,1)$ we have:
	\[(A_{\lambda_k}(u_{\lambda_k}(t)), \tilde{d}_k(t))\geq (A_{\lambda_k}(u_{\lambda_k}(t)), d_k(t)) - \epsilon .\]
	Note that such $\tilde{d}_k(t)$ always exists since $A_{\lambda_k}(u_{\lambda_k}(t))\in L^2(\Omega)$ and $C^\infty_0(\Omega)$ is dense in $L^2(\Omega)$.
	Since $d_k$ is unitary, therefore both  $d_k$ and $\tilde{d}_k$ are uniformly bounded in $L^\infty((0,\infty);L^2(\Omega))$ for all $k\in \N$.
	Moreover, $\text{TV}(\tilde{d}_k(t))$ is also uniformly bounded as  $\tilde{d}_k(t)\in  C_0^\infty(\Omega) $ and $\Omega\subset \R^n$ is bounded.
	Because $A_{\lambda_k}(u_{\lambda_k}(t))\in \partial \text{TV}(J_{\lambda_k}(u_{\lambda_k}(t)))$, we have that for every $k\in \N$,
	\begin{eqnarray*}
		\text{TV}(J_{\lambda_k}(u_{\lambda_k}(t)) + \tilde{d}_k(t)) - \text{TV}(J_{\lambda_k}(u_{\lambda_k}(t)) )
		 \geq (A_{\lambda_k}(u_{\lambda_k}(t)), \tilde{d}_k(t)) \geq  \norm{A_{\lambda_k}(u_{\lambda_k}(t))} -\epsilon, 
	\end{eqnarray*}
	for some $\epsilon\in (0,1)$ and all $t\in (0,\infty)$. This implies that 
	\[	\text{TV}(J_{\lambda_k}(u_{\lambda_k}(t)) + \tilde{d}_k(t)) \to \infty \quad \text{ as }\quad k\to \infty.\]
	However, due to the triangle inequality
	\begin{eqnarray*}
\text{TV}(J_{\lambda_k}(u_{\lambda_k}(t)) + \tilde{d}_k(t))\leq  \text{TV}(J_{\lambda_k}(u_{\lambda_k}(t)) )+ \text{TV}( \tilde{d}_k(t))
		\leq  \text{TV}(u_0)+\text{TV}(\tilde{d}_k(t)),
	\end{eqnarray*}
 where $\text{TV}(\tilde{d}_k(t))$ is uniformly bounded for  every $t \in (0,\infty)$.
	This means that 
	\[\text{TV}(J_{\lambda_k}(u_{\lambda_k}(t)) + \tilde{d}_k(t)))<\infty \]
	which gives the contradiction.
	Therefore, we have that $A_{\lambda}(u_\lambda(\cdot))\in L^\infty([0,\infty);H)$ is uniformly bounded for all $\lambda\in (0,\lambda_1]$.
	
	The uniform boundedness of $\ddot{u}_\lambda  \in L^\infty([0,\infty);H)$ for $\lambda\in(0,\lambda_1]$ follows from the obtained results $\dot{u}_\lambda , A_{\lambda}(u_\lambda )\in L^\infty([0,\infty);H)$ and equation \eqref{approximatePro}.

	\textbf{Step 4}.
	Show the existence of a function $u\in W_2^\infty([0,\infty);H)$ which is a solution to \eqref{eq:acc_tvf} in the sense of Definition \ref{def:Solution_STVF}.

	First, we claim that for every sequence $(\lambda_k)_{k\in \N}$ with $\lambda_k\to 0$, there exists a uniformly convergent subsequence $(u_{\lambda_k})_{k\in \N}\in C^1([0,T];H)$ (here we do not change the notation for the subsequence) and for every $t\in [0,T]$ of arbitrary $T\in (0,\infty)$, so that
	\begin{equation}
	\label{eq:udot_lambda_convergence}
	u_{\lambda_k}  \to u  \textmd{~in~}   C^0([0,T];H), \quad \text{ and } \quad
	\dot{u}_{\lambda_k}  \to \dot{u}  \textmd{~in~}  C^0([0,T];H).
	\end{equation}
	This follows from the Arzel\'a-Ascoli theorem by noting that
	\[u_{\lambda_k} \in L^\infty([0,T];H) \quad  \text{ and } \quad  \dot{u}_{\lambda_k}  \in L^\infty([0,T];H)  \] are uniformly bounded for all $\lambda\in (0,\lambda_1)$, as well as $\ddot{u}_{\lambda_k}  \in L^\infty([0,T];H)$. Therefore, all elements of both $(u_{\lambda_k}) $ and $(\dot{u}_{\lambda_k}) $ are Lipschitz continuous thus equicontinuous over $t\in [0,T]$ for arbitrary $T\in (0,\infty)$. 
	Note that subsequences have to be applied here whenever they are needed, and the  uniform convergence in \eqref{eq:udot_lambda_convergence} implies that 
	\[ u_{\lambda_k}  \to u  \textmd{~in~}   C^1([0,T];H), \;  \text{ uniformly. }\]
	
	Furthermore, the uniform boundedness of  $\ddot{u}_{\lambda_k}$ in $L^\infty([0,\infty);H)$ implies that there exists a subsequence $(\lambda_{k_j})_{j\in \N}$  such that for arbitrary $T<\infty$:
	\begin{equation}
	\label{eq:weakConv}
	\ddot{u}_{\lambda_{k_j}} \rightharpoonup \ddot{u} \textmd{~in~}  L^2([0,T];H) \textmd{~as~}\;  j\to \infty.
	\end{equation}
	
	Now we show that $u(t)\in D(\partial \text{TV})$ for every $t\in (0,T]$, and it holds that
	\[ \ddot{u}(t) +\eta \dot{u}(t) \in -\partial \text{TV}(u(t))  \quad \text{ for a.e.  }\quad  t\in (0,T]  . \]
	We first notice for each $t>0$ that
	\begin{equation*}
	- \ddot{u}_{\lambda}(t) - \eta \dot{u}_{\lambda}(t) = A_{\lambda}(u_\lambda) \in \partial \text{TV}(J_{\lambda}(u_\lambda)),
	\end{equation*}
	which means that for arbitrary but fixed $w\in H$, we have
	\begin{equation*}
	\text{TV}(w) \geq \text{TV}(J_{\lambda}(u_\lambda)(t)) - (\ddot{u}_{\lambda}(t) + \eta \dot{u}_{\lambda}(t), w- J_{\lambda}(u_\lambda(t))).
	\end{equation*}
	Consequently, for $0< s\leq t\leq \infty$, it holds that
	\begin{equation*}
	(t-s)\text{TV}(w) \geq \int^t_s \text{TV}(J_{\lambda}(u_\lambda(\tau))) d\tau - \int^t_s (\ddot{u}_{\lambda}(\tau) + \eta \dot{u}_{\lambda}(\tau), w- J_{\lambda}(u_\lambda(\tau)))  d\tau.
	\end{equation*}
	
	Using the triangle inequality and Definition \ref{def:Yosida} of the Yosida approximation operator, we get:
	\begin{equation*}
	\begin{aligned}
 \norm{J_{\lambda}(u_\lambda(t))- u(t)} &\leq \norm{J_{\lambda}(u_\lambda(t))- u_\lambda(t)}+\norm{u(t)- u_\lambda(t)} \\
 & =\lambda\norm{A_\lambda u_\lambda(t)} +\norm{u(t)- u_\lambda(t)}.
	\end{aligned}
	\end{equation*}
	It has been shown in Step 3 that  $\norm{A_\lambda u_\lambda(t)}$ is uniformly bounded for all $\lambda\in (0,\lambda_1)$ and all $t\in (0,\infty)$. In combination with the uniform convergence of $u_\lambda \to u$, we have that  $J_{\lambda}(u_\lambda) \to u$ as $\lambda\to 0$ uniformly for all $t\in (0,T]$.
	Using the lower semi-continuity of the $\text{TV}$ functional, Fatou's Lemma, and the convergence \eqref{eq:udot_lambda_convergence} and weak convergence \eqref{eq:weakConv}, we conclude upon sending $\lambda=\lambda_{k_j}\to 0$ that
	\begin{equation*}
	(t-s)\text{TV}(w) \geq \int^t_s \text{TV}(u(\tau)) d\tau - \int^t_s (\ddot{u}(\tau) + \eta \dot{u}(\tau), w- u(\tau))  d\tau.
	\end{equation*}
	Thus, when $t$ is a Lebesgue point of $\dot{u},\ddot{u}$ and $\text{TV}(u)$, it holds that
	\begin{equation*}
	\text{TV}(w) \geq \text{TV}(u(t)) - (\ddot{u}(t) + \eta \dot{u}(t), w- u(t)),
	\end{equation*}
	for all $w\in H$. Since $- (\ddot{u}(t) + \eta \dot{u}(t))\in H$ is bounded, by definition of subgradient we have $u(t)\in D(\partial \text{TV})$ and
	\[-(\ddot{u}(t) + \eta \dot{u}(t))\in \partial \text{TV}(u(t)) \quad \text{ for almost every }\quad t\in (0,\infty).\]
	
	Finally we show that for all $t>0$, $u(t)\in D(\partial \text{TV})$.
	For every $t>0$, let $t_n\to t$ and $u(t_n)\in D(\partial \text{TV})$, and $- (\ddot{u}(t_n) + \eta \dot{u}(t_n))\in \partial \text{TV}(u(t_n))$. Because of the uniform boundedness of both $\dot{u}$ and $\ddot{u}$, and the continuity of $\dot{u}$, we have, up to a subsequence:
	\[\dot{u}(t_n)\to  \dot{u}(t)\; \text{ in }\; H, \text{ and } \; \ddot{u}(t_n) \rightharpoonup v\; \text{ weakly in } \;H \text{ for some } v\in H.\]
	For every fixed $w\in H$, we have
	\[\text{TV}(w)\geq \text{TV}(u(t_n))-(\ddot{u}(t_n) + \eta \dot{u}(t_n),w-u(t_n)).\]
	Passing to the limit $n\to \infty$, and due to the continuity of $u$ and the lower semi-continuity of $\text{TV}$, we arrive at
	\[\text{TV}(w)\geq \text{TV}(u(t))-(v + \eta \dot{u}(t),w-u(t)).\]
Then we have shown that $u(t)\in D(\partial \text{TV})$ for all $t>0$ and $ -(v+\eta \dot{u}(t))\in \partial \text{TV}(u(t))$. This concludes the proof.
\end{proof}
We remark that for the first-order TVF \eqref{eq:tvf}, using tools from semi-group theory, the regularity of the initial data can be relaxed to $L^2(\Omega)$ or even $L^1(\Omega)$ to prove the existence and uniqueness of solutions \cite{AndCasMaz04}. However, this does not seem to hold true for the second-order TVF \eqref{eq:acc_tvf} as it is a nonlinear wave equation, and particularly the semi-group theory does not apply here.
%We also remark that the initial value $\dot{u}(0)=0$ is not compulsory for the analytical results here and later, but it is a natural choice from an algorithmic point of view.
Now we continue with the uniqueness of the solution.
\begin{theorem}
	\label{thm:uniqueness_TVF}
	Problem \eqref{eq:acc_tvf}  admits a unique weak solution given prescribed initial and boundary conditions.
\end{theorem}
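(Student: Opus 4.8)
Suppose $u_1,u_2\in\mathcal{V}$ are two strong solutions of \eqref{eq:acc_tvf} with the same initial and boundary data, and set $w:=u_1-u_2$. Writing $\xi_i(t):=-\ddot u_i(t)-\eta\dot u_i(t)$, Definition \ref{def:Solution_STVF} gives $\xi_i(t)\in\partial TV(u_i(t))$ for all $t>0$, and $u_i\in C^2((0,\infty);L^2(\Omega))$ yields $\xi_i\in C((0,\infty);L^2(\Omega))$. Moreover, differentiating the convex energy along the $C^1$ curve $u_i$ (chain rule for $TV$) and inserting the equation gives
\[
\frac{d}{dt}\left(TV(u_i(t))+\frac12\norm{\dot u_i(t)}^2\right)=-\eta\norm{\dot u_i(t)}^2\le 0,
\]
hence the uniform a priori bounds $\norm{\dot u_i(t)}^2\le 2\,TV(u_0)$, $TV(u_i(t))\le TV(u_0)$, and, via the Sobolev inequality used in Step~2 of the proof of Theorem \ref{thm:existence_TVF}, $\norm{u_i(t)}\le C$ for all $t>0$. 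The difference then satisfies
\[
\ddot w(t)+\eta\dot w(t)+\bigl(\xi_1(t)-\xi_2(t)\bigr)=0,\qquad w(0)=0,\quad\dot w(0)=0,
\]
with homogeneous Neumann data, and it suffices to prove $w\equiv0$.

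The natural attempt is an energy estimate: testing this equation with $\dot w(t)$ in $L^2(\Omega)$ gives
\[
\frac12\frac{d}{dt}\norm{\dot w(t)}^2+\eta\norm{\dot w(t)}^2=-\bigl(\xi_1(t)-\xi_2(t),\dot w(t)\bigr).
\]
For the first-order flow \eqref{eq:tvf} the corresponding computation closes at once, since monotonicity of $\partial TV$ supplies $(\xi_1-\xi_2,u_1-u_2)\ge0$. Here, however, the subgradient difference is tested against $\dot w$ and not against $w$: monotonicity gives no sign for $(\xi_1-\xi_2,\dot w)$, and, $\partial TV$ being far from Lipschitz, there is no bound $\norm{\xi_1-\xi_2}\le C\norm{w}$ either. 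Controlling this term is the crux; it reflects the genuinely quasilinear hyperbolic, rather than parabolic, character of \eqref{eq:acc_tvf}, for which no maximum/comparison principle is available.

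To circumvent this I would lower the order in time. Integrating the difference equation once and using $w(0)=\dot w(0)=0$ gives the first-order relation
\[
\dot w(t)+\eta w(t)+\Xi(t)=0,\qquad \Xi(t):=\int_0^t\bigl(\xi_1(s)-\xi_2(s)\bigr)\,ds,
\]
equivalently $w(t)=-\int_0^t e^{-\eta(t-s)}\Xi(s)\,ds$ by Duhamel's formula, with $\Xi\in C([0,\infty);L^2(\Omega))$ and $\Xi(0)=0$. Pairing this relation with $w$ yields $\frac12\frac{d}{dt}\norm{w}^2+\eta\norm{w}^2=-(\Xi,w)$, while $\frac{d}{dt}(\Xi,w)=(\xi_1-\xi_2,w)+(\Xi,\dot w)$ now has a \emph{nonnegative} leading term by monotonicity. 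The idea is to run an iteration on a short interval $[0,\delta]$: bound $w$ and $\dot w$ from the Duhamel representation and the a priori bounds above, feed the resulting estimate back through the monotone term $(\xi_1-\xi_2,w)\ge0$, exploiting also the $1$-homogeneity of $TV$ (Lemma \ref{lem:homogenous}, i.e.\ the representation $\xi_i=-\operatorname{div}(p_i)$ with $\abs{p_i}\le1$ and $(p_i,\nabla u_i)=TV(u_i)$), and repeat until the accumulated Gronwall-type inequality forces $w\equiv0$ on $[0,\delta]$; this then propagates along a partition of $(0,\infty)$, which is admissible because $w$ and $\dot w$ vanish at each restart. A possible alternative is to compare a strong solution directly with the Yosida approximants $u_\lambda$ from the proof of Theorem \ref{thm:existence_TVF}, which are unique by the Cauchy-Lipschitz-Picard theorem, using $\norm{u_0-J_\lambda(u_0)}\to0$ together with the uniform bounds there to pass to the limit $\lambda\to0$; but this runs into the same difficulty. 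In either route I expect the decisive step to be precisely the passage from the only available monotonicity inequality, $(\xi_1-\xi_2,w)\ge0$, to effective control of $(\xi_1-\xi_2,\dot w)$; the remaining ingredients — the a priori bounds, the Duhamel formula and the continuation argument — are routine once that point is settled.
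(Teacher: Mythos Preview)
Your proposal is honest but incomplete: you correctly isolate the obstruction --- monotonicity gives only $(\xi_1-\xi_2,w)\ge 0$, which does not control $(\xi_1-\xi_2,\dot w)$ --- and then explicitly leave that decisive step open. Without it the argument does not close; nothing you wrote explains how a Gronwall iteration could ever force $w\equiv 0$ from a one-sided inequality with no quantitative lower bound. Abstract monotonicity of $\partial TV$ alone is in fact not enough here, and this is the genuine gap.

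What the paper does is precisely to break out of the abstract setting. It uses your ``lower the order'' idea --- testing the difference equation against the antiderivative $\phi^s_v(t)=-\int_t^s v(r)\,dr$, which amounts to your integrated relation $\dot w+\eta w+\Xi=0$ paired with $w$ --- but then, instead of invoking only $(\xi_1-\xi_2,w)\ge 0$, it inserts the explicit representation $p=\nabla u/|\nabla u|$ from \eqref{eq:sub_gradient_u} and computes
\[
(\operatorname{div}p-\operatorname{div}\bar p,\,v)
=-\int_\Omega \frac{(|\nabla\bar u|\,|\nabla u|-\langle\nabla\bar u,\nabla u\rangle)(|\nabla\bar u|+|\nabla u|)}{|\nabla\bar u|\,|\nabla u|}\,<\,0
\]
whenever $v\neq 0$ and $u$ is not a scalar multiple of $\bar u$. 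This \emph{strict} sign, together with continuity of $v$ in $t$, is then fed into a contradiction argument at the first time $\epsilon$ where $v(\epsilon)\neq 0$: the left-hand side of the tested identity is $\ge\|v(\epsilon)\|^2/2>0$, while the right-hand side is $\le 0$ by the strict inequality propagated to a neighbourhood via the mean-value form $\int_t^s v(r)\,dr=(s-t)v(t+h_s)$. The procedure is then restarted on successive intervals.

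So the missing idea in your plan is exactly this: replace the bare monotonicity $(\xi_1-\xi_2,w)\ge 0$ by the strict negativity coming from the concrete structure of the $TV$ subgradient, and combine it with a pointwise-in-time contradiction rather than a Gronwall estimate. Your Duhamel/iteration route, as stated, does not supply that and would not close without it.
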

\begin{proof}
	Let $u$ and $\bar{u}$ both be solutions of \eqref{eq:acc_tvf} that satisfy both the initial and boundary conditions. Further $p$ and $\bar{p}$ are the function forms in \eqref{eq:sub_gradient_u} corresponding to  $u$ and $\bar{u}$, respectively.
	For every $ s\in (0,T]$, define for every function $g\in \mathcal{V} $
	\begin{equation*}
	\phi^s_g(t):=
	\left\{
	\begin{array}{ll}
	-\int_t^s g(r)dr ,& \text{ for } t\in (0,s), \\
	0, & \text{ for } t\geq s.
	\end{array}\right.
	\end{equation*}
	It is not hard to see that $\phi^s_g(s)=0$, $\dot{\phi}^s_g(t)=g(t)$.
	Let $v=u-\bar{u}$.
	Compute \eqref{eq:acc_tvf} once for $u$ and then for $\bar{u}$, subtract the two PDEs, and then test the resulting equation by $\phi^s_v(t)$ to obtain:
	\begin{equation}\label{eq:test1}
	\int_0^s(\ddot{v}(t) + \eta \dot{v}(t) ,\phi^s_v(t))dt=\int_0^s(\operatorname{div}(p)- \operatorname{div}(\bar{p}),\phi^s_v(t))dt.
	\end{equation}
	Using integration by parts and the initial conditions $v(0)=\dot{v}(0)=0$, equation \eqref{eq:test1} becomes:
	\begin{equation}\label{eq:test2}
	\int_0^s \frac{d \norm{v(t)}^2}{2dt} + \eta \norm{v(t)}^2 dt= -\int_0^s (\operatorname{div}(p)- \operatorname{div}(\bar{p}),\phi^s_v(t))dt.
	\end{equation}
	Then \eqref{eq:test2} is explicitly written as
	\begin{equation}\label{eq:test3}
	\begin{array}{ll}
	\int_0^s \frac{d \norm{v(t)}^2}{2dt} + \eta \norm{v(t)}^2 dt
	&= \int_0^s\left( \operatorname{div}(p(t))-\operatorname{div}(\bar{p}(t)) ,\int_t^s v(r)dr\right)dt\\
	&= \int_0^s\left(  \operatorname{div}(p(t))-\operatorname{div}(\bar{p}(t)), (s-t) v(t+h_s)\right)dt,
	\end{array}
	\end{equation}
	with $t+h_s \in (t,s)$.
	The second equality holds thanks to the continuity of $v(t)$ and the mean value theorem.
	
	In the following, we prove by contradiction that $v\equiv 0$ over the time domain $(0,s)$.
	We first notice that because of  equation \eqref{eq:acc_tvf}, if $u(t)\neq \bar{u}(t)$ for $t\in (0,s)$, and $u_0\neq 0$, then $u(t) \neq c\bar{u}(t)$ for any nonzero constant $c$.
	As $v(0)=0$, let $t=\epsilon >0$  be the first occasion such that $v(\epsilon)\neq 0$.
	If no such $\epsilon$ exists, then we are done.
	In case $v$ is non-zero immediately after $t=0$, then we choose a sufficiently small $\epsilon >0$ such that  $v(\epsilon)\neq 0$.
	
	Then we have
	\[\int_0^\epsilon \frac{d \norm{v(t)}^2}{2dt}+ \eta \norm{v(t)}^2 dt \geq \norm{v(\epsilon)}^2/2  >0.\]
	On the other hand, since $-\operatorname{div}(p(\epsilon))\in \partial \text{TV}(u(\epsilon))$  and $-\operatorname{div}(\bar{p}(\epsilon)) \in \partial \text{TV}(\bar{u}(\epsilon))$, taking into account monotonicity of the subgradients,  we have
	\begin{equation}\label{eq:negtive}
	 \left( \operatorname{div}(p(\epsilon))-\operatorname{div}(\bar{p}(\epsilon))  , v(\epsilon)\right)
	<  0 .
	\end{equation}
	 Note that the left hand side of \eqref{eq:negtive} is a symmetric Bregman distance with respect to the $\text{TV}$ functional. For $ u(\epsilon)\neq \bar{u}(\epsilon)$, the symmetric Bregman distance equals to $0$ only if the two sets
	  $\partial \text{TV}(u(\epsilon)) $ and $ \partial \text{\text{TV}}(\bar{u}(\epsilon)) $ intersect, and $\operatorname{div}(p(\epsilon))=\operatorname{div}(\bar{p}(\epsilon)) $ in particular.
	  This means that the equation $\ddot{v}(t)+\eta \dot{v}(t)=0$ is satisfied for $t\in [0,\epsilon]$. However the homogeneous equation $\ddot{v}+\eta\dot{v}=0$ has a unique solution under the initial condition $v(0)=\dot{v}(0)=0$, that is $v(t)\equiv 0$ for all $t\in [0,\infty)$, which contradicts $ u(\epsilon)\neq \bar{u}(\epsilon)$.
	Thus the above inequality \eqref{eq:negtive} holds strictly.
	Recall that $v(t)\in C^1([0,T];H)$. Then, by continuity of $v(t)$, there exists a neighborhood $B(\epsilon,h_\epsilon):=(\epsilon-h_\epsilon,\epsilon +h_\epsilon)$ of $\epsilon$ such that for all $t\in B(\epsilon,h_\epsilon)$,  the following relation holds true:
	\begin{equation}\label{eq:test4}
	\left( \operatorname{div}(p(t))-\operatorname{div}(\bar{p}(t))) ,  v(t+h)\right) \leq 0, \text{ for all } \abs{h} \leq h_\epsilon.
	\end{equation}
	Now we return to the right-hand side of \eqref{eq:test3} with $s=\epsilon$, and find
	\[\int_0^\epsilon \left( \operatorname{div}(p(t))-\operatorname{div}(\bar{p}(t)) , (\epsilon-t) v(t+\bar{h})\right)dt  \leq  0.\]
	This implies
	\[\int_0^\epsilon  \frac{d \norm{v(t)}^2}{2dt} + \eta \norm{v(t)}^2dt \leq 0,\]
	which yields a contradiction. Therefore  $v(t)\equiv 0$ over $t\in [0,\epsilon]$.
	Note that $\epsilon$ here does not depend on the initial time. 
	Then we can repeatedly apply this procedure to the time domains $[n\epsilon,(n+1)\epsilon]$  for every $n\in \N$.
	This shows that $v(t)\equiv 0$ over $t\in [0,\infty)$.
	Thus, equation \eqref{eq:acc_tvf} admits a unique solution.
\end{proof}

Finally, we show a decay rate for the $\text{TV}$ energy when applying the second-order TVF \eqref{eq:acc_tvf} as a total variation minimizing flow. For the formulation of the results we use the Landau symbol $o(\cdot)$.
\begin{proposition}
	\label{acceleration}
	Let $u$ be the solution of the  second-order TVF \eqref{eq:acc_tvf}, then
	\[\text{TV}(u(t))=o\left(\frac{1}{t}\right)\; \text{ as } \; t\to\infty .\]
\end{proposition}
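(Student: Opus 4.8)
The plan is to combine the dissipated energy of \eqref{eq:acc_tvf} with an ``anchoring'' functional, in the spirit of the classical analysis of Heavy-Ball-with-Friction systems, but with the one-homogeneity of $TV$ (Lemma~\ref{lem:homogenous}) playing the role usually played by the convexity inequality $(\partial\Phi(w),w-w^*)\ge\Phi(w)-\Phi(w^*)$, so that no reference to an explicit minimizer of $TV$ is needed.

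I would first record the behaviour of the energy $\mathcal{E}(t):=TV(u(t))+\frac{1}{2}\norm{\dot u(t)}^2$ that is already implicit in the proof of Theorem~\ref{thm:existence_TVF}. Writing the solution as $\ddot u(t)+\eta\dot u(t)+\beta(t)=0$ with $\beta(t)\in\partial TV(u(t))$ and using the chain rule $\frac{d}{dt}TV(u(t))=(\beta(t),\dot u(t))$ along the curve $u\in C^2((0,\infty);L^2(\Omega))$ (as in the derivation of \eqref{Decreasing1}), one finds $\dot{\mathcal E}(t)=-\eta\norm{\dot u(t)}^2\le 0$. Hence $\mathcal E$ is nonincreasing with $0\le\mathcal E(t)\le\mathcal E(0)=TV(u_0)$; in particular $\norm{\dot u(t)}^2\le 2\,TV(u_0)$ and $\int_0^\infty\norm{\dot u(t)}^2\,dt\le TV(u_0)/\eta<\infty$, while $u,\dot u\in L^\infty((0,\infty);L^2(\Omega))$ by Step~2 of the proof of Theorem~\ref{thm:existence_TVF}.

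The core of the argument is the anchor $h(t):=\frac{1}{2}\norm{u(t)}^2$. Differentiating it twice and inserting the equation gives $\ddot h(t)=\norm{\dot u(t)}^2+(u(t),\ddot u(t))=\norm{\dot u(t)}^2-\eta\dot h(t)-(\beta(t),u(t))$, and here Lemma~\ref{lem:homogenous} enters decisively, since it yields $(\beta(t),u(t))=TV(u(t))$ and therefore
\[
\ddot h(t)+\eta\dot h(t)+TV(u(t))=\norm{\dot u(t)}^2,\qquad t>0.
\]
Because $TV(u(t))=\mathcal E(t)-\frac{1}{2}\norm{\dot u(t)}^2$, this rearranges to $\mathcal E(t)=\frac{3}{2}\norm{\dot u(t)}^2-\ddot h(t)-\eta\dot h(t)$; integrating over $[0,T]$ and using $\dot h(0)=(u_0,\dot u(0))=0$, $h(T)\ge 0$, and $\abs{\dot h(T)}\le\norm{u(T)}\,\norm{\dot u(T)}$ bounded by the $L^\infty$ bounds recorded above, one obtains $\int_0^T\mathcal E(t)\,dt\le\frac{3}{2\eta}TV(u_0)+C$ with $C$ independent of $T$, hence $\int_0^\infty\mathcal E(t)\,dt<\infty$.

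To finish, since $\mathcal E\ge 0$ is nonincreasing and integrable on $(0,\infty)$, for each $t>0$ one has $\frac{t}{2}\mathcal E(t)\le\int_{t/2}^{t}\mathcal E(s)\,ds$, whose right-hand side tends to $0$ as $t\to\infty$; thus $t\,\mathcal E(t)\to0$, and since $0\le TV(u(t))\le\mathcal E(t)$ this gives $TV(u(t))=o(1/t)$. The one genuinely delicate point is establishing the identity $\ddot h+\eta\dot h+TV(u)=\norm{\dot u}^2$: it relies on the chain rule for $t\mapsto TV(u(t))$ along the $C^2$-in-time trajectory and on $(\beta,u)=TV(u)$ from Lemma~\ref{lem:homogenous}, the latter being exactly the substitute for the minimizer-based convexity estimate that makes the plain quadratic anchor $h=\frac{1}{2}\norm{u}^2$ suffice, despite $TV$ having a whole affine subspace of minimizers. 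Everything else --- the a priori bounds and the final integral estimate --- is routine.
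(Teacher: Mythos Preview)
Your proof is correct and follows essentially the same route as the paper: energy dissipation for $\mathcal E$, an anchoring functional whose derivative combines with $\mathcal E$ via the homogeneity identity of Lemma~\ref{lem:homogenous}, integration using the $L^\infty$ bounds from Theorem~\ref{thm:existence_TVF}, and the final monotonicity argument $\tfrac{t}{2}\mathcal E(t)\le\int_{t/2}^t\mathcal E$. The only cosmetic difference is the choice of anchor: the paper takes $h(t)=\tfrac{\eta}{2}\|u(t)\|^2+(\dot u(t),u(t))$ and differentiates once, whereas you take $h(t)=\tfrac{1}{2}\|u(t)\|^2$ and differentiate twice; since the paper's $h$ equals $\eta$ times your $h$ plus the derivative of your $h$, the two computations are algebraically identical.
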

\begin{proof}
	We adopt an idea of \cite{Cabot}. Let us first introduce the auxiliary function
	\begin{eqnarray}\label{h}
	h(t): = \frac{\eta}{2} \| u(t) \|^2+ ( \dot{u}(t), u(t) ).
	\end{eqnarray}
	By elementary calculations, we derive that
	\begin{eqnarray*}
		\dot{h}(t) = \eta ( \dot{u}(t), u(t) ) + ( \ddot{u}(t), u(t)) + \| \dot{u}(t) \|^2 = \| \dot{u}(t) \|^2- ( \partial \text{TV}(u(t)), u(t)).
	\end{eqnarray*}
	Then we define the entropy functional
	\begin{equation}\label{eq:entropy_TV}
	\mathcal{E}(t):=\norm{\dot{u}(t)}^2/2+ \text{TV}(u(t)).
	\end{equation}
	Note that $\mathcal{E}(t)$ is absolutely continuous and
	 \begin{equation}\label{eq:entro_deriv}
		\frac{d\mathcal{E}(t)}{dt}=-\eta \norm{\dot{u}}^2 .
	\end{equation}
 which in combination with Lemma \ref{lem:homogenous} implies that
	\begin{eqnarray*}
		\frac{3}{2} \dot{\mathcal{E}}(t) + \eta \mathcal{E}(t)  + \eta \dot{h}(t)
		= \eta \left [ \text{TV}(u(t)) -  (\partial \text{TV}(u(t)),u(t)) \right] = 0.
	\end{eqnarray*}
	Integrating the above inequality over $[0,T]$ we obtain together with the non-negativity of $\mathcal{E}(t)$,
	\begin{equation}\label{chiIneq}
	\quad \int^T_0 \mathcal{E}(t) dt = \frac{3}{2\eta} \left( \mathcal{E}(0) - \mathcal{E}(T) \right) + (h(0)-h(T))
	\leq \left(  \frac{3}{2\eta} \mathcal{E}(0) + h(0) \right)- h(T).
	\end{equation}
	On the other hand, by Theorem \ref{thm:existence_TVF}, $u(t)$ and $\dot{u}(t)$ are uniformly bounded. Hence, there exists a constant $M$ such that $|h(t)|\leq M$ for all $t$. Letting $T\to \infty$ in (\ref{chiIneq}), we obtain
	\begin{equation}\label{chiIneq2}
	\int^\infty_0 \mathcal{E}(t)  dt < \infty.
	\end{equation}
	Moreover, since $\mathcal{E}(t)$ is non-increasing, combining with \eqref{chiIneq2} we deduce that
	\begin{equation*}
	\int^{t}_{t/2} \mathcal{E}(\tau) d\tau \geq \frac{t}{2}\mathcal{E}(t) \quad \Rightarrow \quad \lim_{t\to\infty} t\cdot \mathcal{E}(t)=0.
	\end{equation*}
	Hence, we conclude $\lim_{t\to\infty} t \text{TV}(u(t)) =0$, which yields that $\text{TV}(u(t))=o(\frac1t)$.
\end{proof}
It is known that for gradient flows (first-order) of one-homogeneous functionals, finite extinction times exist, see, e.g., \cite{BunBurChaNov20}. However, for the corresponding second-order damping flow as in \eqref{eq:acc_tvf}, this is not the case.
\begin{theorem}\label{thm:finite}
There exists no finite extinction time for the solution of \eqref{eq:acc_tvf} given nontrivial initial condition.
\end{theorem}
\begin{proof}
We prove it by contradiction. Let us take $\mathcal{E}(t)$ as in \eqref{eq:entropy_TV}, and assume $\mathcal{E}(t_0)>0$.
If there exists a finite extinction time, then it means we can find a real number $T>t_0$, such that $ \mathcal{E}(t)\equiv 0$ for all $t\geq T$.
In particular, we choose $T$ to be the smallest one, i.e.,
\begin{equation}\label{eq:finite_end}
T=\inf_{t_e}\set{ t_0<t_e \; |\mathcal{E}(t) \equiv 0\; \text{ for all }\; t\in [t_e,\infty)}.
\end{equation} 
Since both $u(t)$ and $\dot{u}(t)$ are continuous, and using \eqref{eq:entro_deriv}, we have that $\mathcal{E}(t)$ is continuous and monotonically decreasing. 
Let us choose $t_\eta\in(t_0,T)$ which satisfies $\eta(T-t_\eta)< \frac{1}{2}$, and $\mathcal{E}(t_\eta)>0$.
Then we have 
\begin{equation}\label{eq:controdiction_equ}
	\int_{t_\eta}^T \frac{d \mathcal{E}(t)}{dt}dt =- \mathcal{E}(t_\eta)=-\eta \int_{t_\eta}^T \norm{\dot{u}(t)}^2 dt,
\end{equation}
where the first equality is because of \eqref{eq:finite_end}, and the second one is using \eqref{eq:entro_deriv}.
Due to Lipschitz continuity of  $\dot{u}(t)$, we pick
\[t^*=\underset{t\in [t_\eta,T]}{\operatorname{argmax}}\quad \norm{\dot{u}(t)}^2,\; \text{ which satisfies }\; \mathcal{E}(t_\eta)\geq \mathcal{E}(t^*)\geq \frac{1}{2}\norm{\dot{u}(t^*)}^2.\]
Note that $\norm{\dot{u}(t^*)}^2>0$, otherwise we have $\norm{\dot{u}(t)}^2\equiv 0$ over $[t_\eta,T]$ which gives $\mathcal{E}(t_\eta)=0$.
However, since $\eta(T-t_\eta)< \frac{1}{2}$, we derive that
\[ -\eta \int_{t_\eta}^T \norm{\dot{u}(t)}^2 dt \geq  -\eta(T-t_\eta)\norm{\dot{u}(t^*)}^2>-\frac{1}{2}\norm{\dot{u}(t^*)}^2\geq - \mathcal{E}(t_\eta),\]
which contradicts the second equality in \eqref{eq:controdiction_equ}.
This concludes that there exists no such finite $T\in \R$.
\end{proof}
Note that the proof above does not rely on a specific potential energy, i.e. the $\text{TV}$ functional. Therefore it applies to general second-order damping flows for convex potential functions while the corresponding entropy $\mathcal{E}(t)$ is absolutely continuous.

\subsection{Analytic solution in the case of a simple initial datum}
\label{subsec:Ana_solution_stvf}
To have more intuition on the behavior of the solution of \eqref{eq:acc_tvf}, we calculate here an analytical solution to \eqref{eq:acc_tvf} given some simple initial datum. 
To do this, we consider $u_0:\Omega \to \R$ for $\Omega \subset \R^2$ to be an eigenfunction of $\partial \text{TV}$. That means $u_0\in \xi_0\partial  \text{TV}(u_0)$ which is the case when $u_0$ is the characteristic function of some calibrable set, see, for instance, the examples provided in \cite{BalCasNov02,BurGilOshXu06}.
Let $v_0=\psi_0 u_0$  for some $\psi_0 \in \R$. One can then quickly check that $u(t)=\xi(t)u_0$ is a solution of \eqref{eq:acc_tvf}, where $\xi$ solves the following ordinary differential equation
\begin{equation}\label{eq:ode_atvf}\left.
\begin{aligned}
&\ddot{\xi}(t)+ \eta \dot{\xi}(t) +\text{sign}(\xi(t))=0,\\
&\xi(0)=\xi_0,\\
& \dot{\xi}(0)=\psi_0,
\end{aligned}\right\}\;
\text{ where } \;
\text{sign}(\xi)= \left\{\begin{aligned}
& 1, & \quad  \text{ when } \xi >0;\\
& 0, & \quad  \text{ when } \xi =0;\\
&-1, &  \quad  \text{ when } \xi<0.
\end{aligned} \right.
\end{equation}
The solution of this equation can be calculated analytically, which has to be distinguished for different cases of $\text{sign}(\xi)$ due to the discontinuity. 
We start by assuming the initial value $\xi_0>0$, and compute the trajectory for the positive case ($\xi >0$), i.e., 
\[
\left\{\begin{aligned}
& \ddot{\xi}(t)+ \eta \dot{\xi}(t) +1=0;\\
& \xi(t_0)=\xi_0, \quad \dot{\xi}(t_0)=\psi_0,
\end{aligned} \right. \]
for some $\xi_0\in \R^+$, and $\psi_0\in \R$ at initial time $t_0$. 
This gives
\begin{equation}\label{eq:anal_sol_pos}
\xi(t) =-\frac{t-t_0}{\eta} -\frac{ \psi_0\eta +1}{\eta^2}\left(e^{\eta (t_0-t)} -1\right)+ \xi_0   \quad \text{ for } t\in (t_0,t_1) ,
\end{equation}
with 
\begin{equation}\label{eq:anal_der_pos}
 \dot{\xi}(t) =-\frac{1}{\eta}  + \frac{\psi_0\eta+ 1}{\eta}e^{\eta (t_0-t)}   \text{ for } t\in (t_0,t_1), 
\end{equation}
where $t_1$ is the time that $\xi(t_1)=0$. 
Using  $-\frac{t_1-t_0}{\eta} -\frac{ \psi_0\eta +1}{\eta^2}\left(e^{\eta (t_0-t_1)} -1\right)+ \xi_0=0$, we find \[ \psi_1:=\dot{\xi}(t_1)= - \frac{1}{\eta} +(\psi_0+\frac{1}{\eta})e^{\eta (t_0-t_1)} =-(t_1-t_0)+\psi_0+\eta\xi_0   .\]
Notice that if $\psi_0\leq 0$, it is clear that $\dot{\xi}(t_1)\leq 0$.
However for $\psi_0 >0$, the sign of $\dot{\xi}(t_1)$ is not immediately clear. 
We ignore such a case to simplify the discussion.
Another observation is that if $\psi_0=-\eta \xi_0$, then $\psi_1=t_0-t_1<0$.
We plot the solution to see the behaviors of the ODE under such conditions in Figure \ref{fig:ode_plot2} and \ref{fig:ode_plot3}.
This case turns out to be quite interesting in our applications and numerical experiments (see Section \ref{sec:algorithm}).
% We shall see that such $t_1$ always exists for $\eta>0$. For $\psi_0\leq 0$, this is trivial, and for $\psi_0>0$, we find that $\dot{\xi}(t)<0$ for $t>t_0+\frac{\log(\psi_0\eta +1)}{\eta}$. 
%It is likely that $\xi$ and $\dot{\xi}$ might reach $0$ at the same time for certain combination of $\xi_0$ and $\psi_0$, then the evolution will stop. We do not consider this case here.
Then, after passing $t_1$, the value of $\xi(t)$ continues decreasing due to the momentum ($\dot{\xi}(t_1)<0$), and thus the sign function switches to $-1$ in \eqref{eq:ode_atvf} which becomes the equation
\[
\left\{\begin{aligned}
& \ddot{\xi}(t)+ \eta \dot{\xi}(t) -1=0;\\
& \xi(t_1)=\xi_1, \quad \dot{\xi}(t_1)=\psi_1.
\end{aligned} \right. \]
Note that $\xi_1=0$, and $\psi_1= -\frac{1}{\eta}  + \frac{\psi_0\eta+1}{\eta}e^{\eta (t_0-t_1)}<0$. 
This yields the solution 
\begin{equation}\label{eq:anal_sol_neg}
\xi(t) =\frac{t-t_1}{\eta} - \frac{\psi_1\eta-1}{\eta^2} \left( e^{\eta (t_1-t)}  -1\right) +\xi_1  \quad \text{ for } t\in (t_1,t_2),  
\end{equation}
 with 
 \begin{equation}\label{eq:anal_der_neg}  \dot{\xi}(t) =\frac{1}{\eta} +   \frac{\psi_1\eta-1}{\eta}e^{\eta (t_1-t)}  \text{ for } t\in (t_1,t_2) ,
 \end{equation}
 where $t_2$ is the occasion that $\xi$ returns to $0$ after $t_1$.
 For the inhomogeneous initial velocity $\psi_0=-\eta \xi_0$, we have $\psi_2:=\dot{\xi}(t_2)=t_2-t_1+\psi_1>0$.
This repeated process subdivides the time interval $[0,T]=\bigcup_k (T_k:= [t_k,t_{k+1}])$ for $k\in \N$ for every given $T>0$.
 The solution over $T_k$ for arbitrary $k\in \N$ will then alternate between the above two types of trajectories with respect to corresponding initial values, which can be obtained from the previous calculations. For $k\geq 1$, the initial values are typically of the form
 \begin{equation}\label{eq:velocity_relation}
\xi_{k+1}\equiv 0, \text{ and } \quad \psi_{k+1}
% =\frac{ (-1) ^{k}}{\eta} + \frac{\psi_{k-1}\eta +(-1) ^{k-1} }{\eta} e^{\eta (t_{k-1}-t_{k})} 
 =\psi_{k}e^{-\eta \abs{T_k}} +\frac{ (-1) ^{k+1}}{\eta} (1-e^{-\eta \abs{T_k}} ) ,
 \end{equation}
where $\abs{T_k}=t_{k+1}-t_k$.
Note that $\psi_{k}$ is alternating between negative and positive values for $k\geq 1$.
In the case of $\psi_0=-\eta \xi_0$,  we have $\psi_{k+1}=\sum_{i=0}^k (-1)^{i+1} \abs{T_k}$ for all $k\geq 0$. 
Next we further look into the two types of explicit trajectories. First, let us consider the positive case, i.e.,
\eqref{eq:anal_sol_pos} and \eqref{eq:anal_der_pos} with $\xi_0\geq 0$. Assume $t>t_0$ be the time spot such that $\xi(t)=\dot{\xi}(t)=0$.
This then requires that the following system of equations has a real-valued solution:
\begin{equation*}\left\{
	\begin{aligned}
		-t+t_0 + \psi_0 +\eta \xi_0	=0,\\
		-1+ (\psi_0\eta+ 1)e^{\eta (t_0-t)}  =0,
	\end{aligned}\right.
\end{equation*}
where the first equation is based on some rearrangement of the terms in  \eqref{eq:anal_sol_pos} according to the second equation. Through some algebra computation,  we find that the above system admits \textbf{no} real solution for $t>t_0$ except the case that both $\psi_0=0$ and $\xi_0=0$ are satisfied.
Now we turn to the negative case in \eqref{eq:anal_sol_neg} and \eqref{eq:anal_der_neg} with $\xi_1\leq 0$. Similarly, this asks for a real-valued solution to the system
\begin{equation*}\left\{
	\begin{aligned}
		t-t_1 + \psi_1 +\eta \xi_1	=0,\\
		1 + (\psi_1\eta-1)e^{\eta (t_1-t)}  =0,
	\end{aligned}\right.
\end{equation*}
which again admits a real solution only when $\psi_1=0$ and $\xi_1=0$.
This shows that there are infinitely many zeros ($\psi_n=0$) on the trajectory of \eqref{eq:ode_atvf} if the initial values are not zero, which is not surprising since there is no finite extinction time.

We plot in Figure \ref{fig:ode_plot1} some trajectories of \eqref{eq:acc_tvf} with $\xi_0=1$, $\psi_0=0$ used for $\eta=10$, $\eta=1$  and $\eta=0.1$ over a fixed time interval, as well as their first- and second-order derivatives, respectively. 
We notice that $\xi$ is a continuous function with oscillations, and its first-order derivative $\dot{\xi}$ is Lipschitz continuous, while the second-order derivative $\ddot{\xi}$ is piecewise continuous. The behaviors of the flow are distinguished with respect to different $\eta$. Larger $\eta$ results in more frequent oscillations.
\begin{figure}[ht!]
	\includegraphics[width=0.327\textwidth]{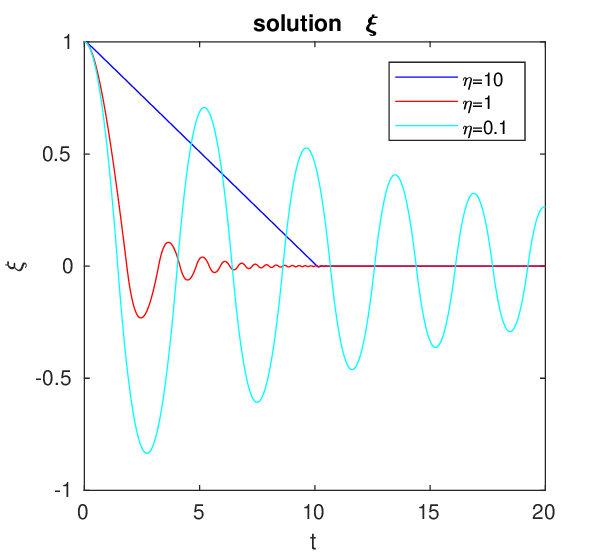} 
	\includegraphics[width=0.327\textwidth]{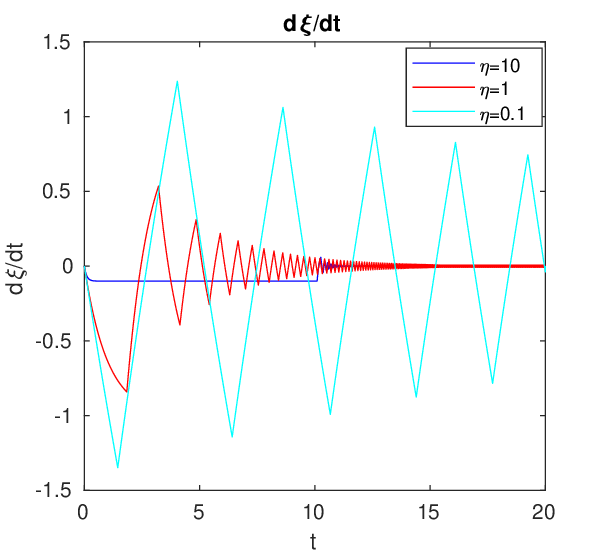} 
	\includegraphics[width=0.327\textwidth]{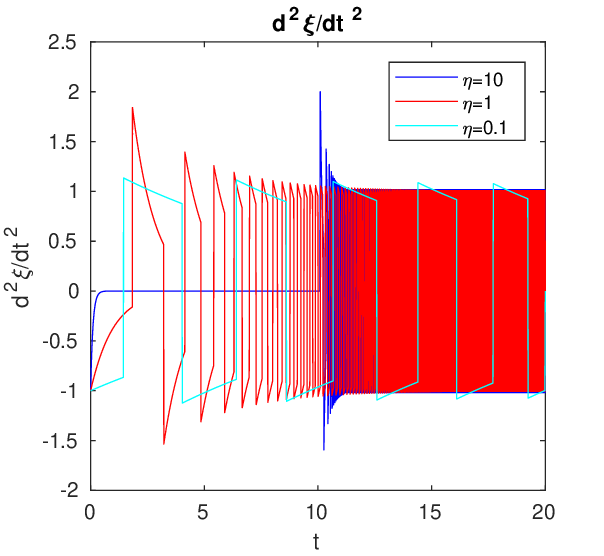} 
	\caption{Trajectories of equation \eqref{eq:ode_atvf} with respect to $\eta=10$ (blue) $\eta=1$ (red) and $\eta=0.1$ (cyan) over the time interval $[0,20]$ with homogeneous initial velocity. The vertical lines in the right figure show the discontinuity of the plotted functions.  }
	\label{fig:ode_plot1}
\end{figure}
The initial velocity $\dot{\psi}(0)$ significantly influences the solution trajectory. 
In Figure \ref{fig:ode_plot2}, we plot the solutions of the second order ODEs under different values for $\eta$ with respect to the initial velocity $\psi_0=0$ and the initial velocity $\psi_0=-\eta \xi_0$, respectively, and also compare them with the first-order flow using the same initial value $\xi_0=255$.
\begin{figure}[ht!]
	\centering
	\includegraphics[width=0.44\textwidth]{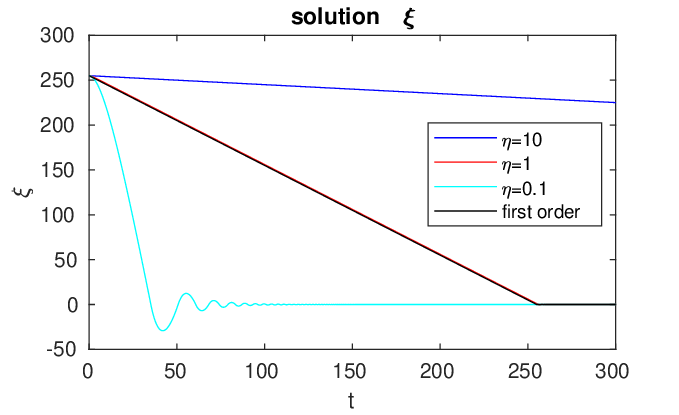} \quad
	\includegraphics[width=0.44\textwidth]{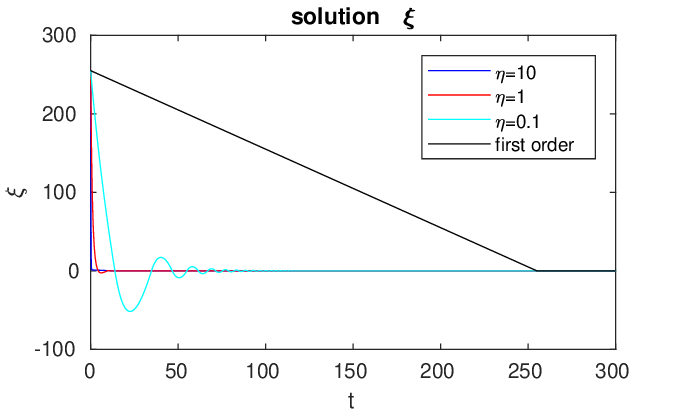} 
	\caption{Trajectories of equation \eqref{eq:ode_atvf} with respect to initial velocities $\psi_0=0$ (left) and $\psi_0=-\eta \xi_0$ (right), respectively, for $\xi_0=255$ over the time interval $[0,300]$. Different $\eta$ values as in Figure \ref{fig:ode_plot1} are compared also with the first-order flow (black color).}
	\label{fig:ode_plot2}
\end{figure}
Note that the decaying behavior for the cases of $\eta=10$ and $\eta=1$ are quite different in case of the homogeneous versus inhomogeneous initial velocity.
In fact, notice that $\psi_0=-\eta \xi_0$ accelerates the decay of the magnitude of $\xi$. This is in particular true for $\eta=10$ and $\eta=1$, which distinguishes them from the first-order flow and homogeneous initial velocity. We provide more comparisons in Figure \ref{fig:ode_plot3} of the initial velocity $\psi_0=-\eta \xi_0$ with some other initial velocities, where we can see that the trajectory with $\psi_0=-\eta \xi_0$ stabilizes around $0$ much faster than the other conditions in the cases of $\eta=1$ and $\eta=10$.
\begin{figure}[ht!]
	\centering
	\includegraphics[width=0.327\textwidth]{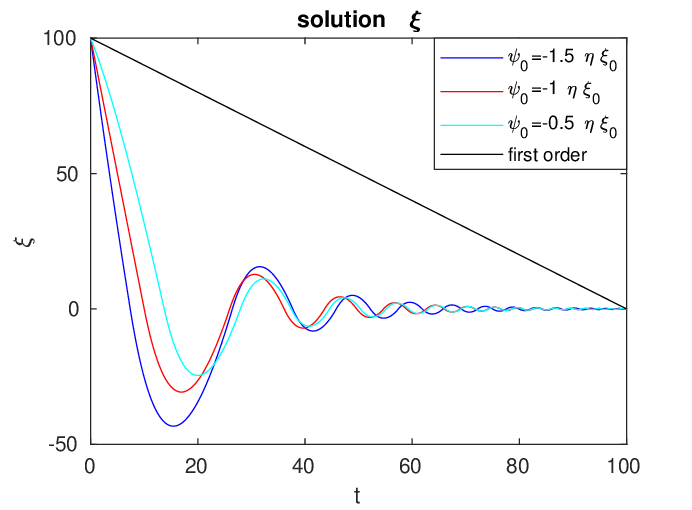} 
	\includegraphics[width=0.327\textwidth]{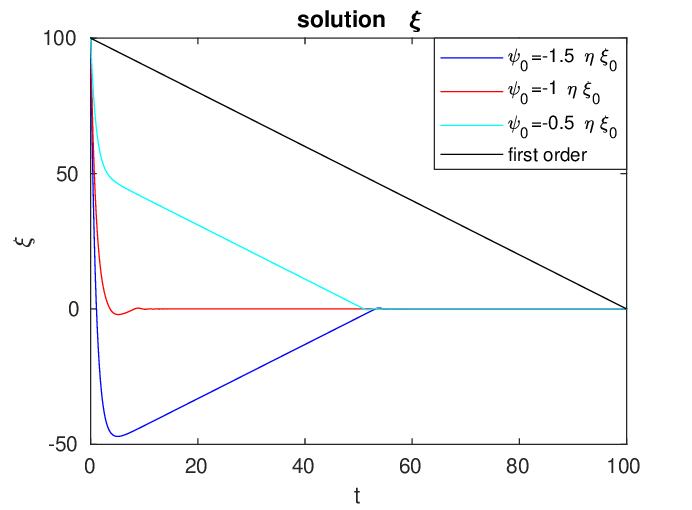} 
	\includegraphics[width=0.327\textwidth]{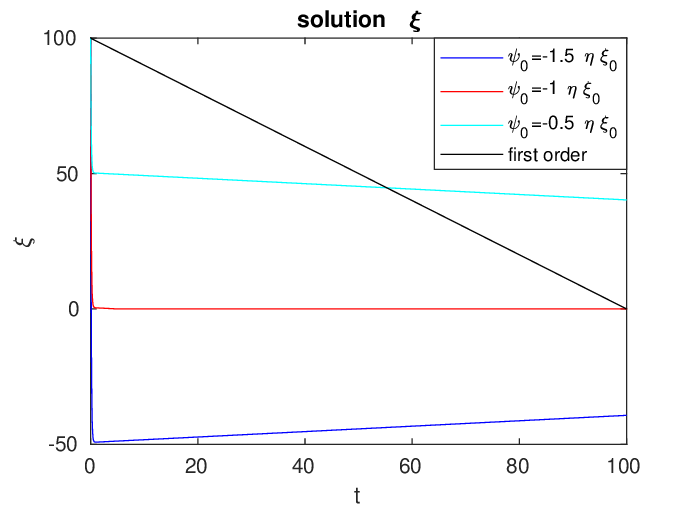}
	\caption{Trajectories of equation \eqref{eq:ode_atvf} with respect to inhomogeneous initial velocity $\psi_0=-1.5\eta \xi_0$, $\psi_0=-\eta \xi_0$ and $\psi_0=-0.5\eta \xi_0$ for $\eta=0.1$ (left), $\eta=1$ (middle) and $\eta=10$ (right) over the time interval $[0,100]$ corresponding to $\xi_0=100$, respectively.}
	\label{fig:ode_plot3}
\end{figure}
This phenomenon has been further studied in Section \ref{sec:algorithm}.
Another interesting case is when $u_0\in \xi_0 \partial \text{TV}(u_0)$, $v_0\in \psi_0 \partial \text{TV}(v_0)$, but $u_0$ and $v_0$ belong to different families of eigenfunctions. This is not discussed here but we show some numerical evidence in Section \ref{subsec:numerical}. 

We remark here that the first-order TV flow has been used in nonlinear spectral decomposition, see e.g., \cite{Gil14,BurGilMoeEckCre16,BunBurChaNov20}. In fact, its second-order derivative of the solution gives a pulse, which defines a nonlinear spectral representation of the signal, see \cite{Gil14} for details. There, the pulse is interpreted as some type of signal spectral which can identify the eigenfunctions for representing the signal.
However, for the second-order flow, $\ddot{\xi}(t) \to  -\text{sign}(\xi)$ is alternating before $\xi$ dying out to $0$ since $\dot{\xi}\to 0$ as $t\to \infty$. Its third order derivative gives countably many pulses, which might also be able to identify signal spectral. However, the details are still subject to further investigation.
We thus conclude our theoretical study on the second-order TVF \eqref{eq:acc_tvf}.
In the next section, we will study another family of nonlinear flows which are also able to decrease the total variation of a function, albeit in a somewhat different manner.
It is motivated from the application of correcting displacement errors, which is different to TVFs.

\section{Mean curvature motion of level sets}
\label{sec:mcf}
Mean curvature flow for level sets of scalar functions has been analyzed first in \cite{CheGigGot91,EvaSpr91}. The associated equation reads
\begin{equation}
\label{eq:mcf_u}
\left\{\begin{array}{ll}
\dot{u}(t)   = \abs{\nabla u(t)}\operatorname{div}\left(\frac{\nabla u(t)}{\abs{\nabla u(t)}} \right), & \text{ in } \R^n \times (0,\infty );\\
u(0)=u_0, & \text{ in } \R^n \times 0.
\end{array}
\right.
\end{equation}
In such a form, the flow can overcome the singularity and can be extended after topological changes which may be generated during the evolution of standard hypersurface mean curvature flow.
It finds many applications in surface processing and also in image processing.
A particular application which has become a research focus recently is concerned with correcting displacement errors in image data \cite{LenSch11}.
Also, a connection between the level-set MCF and a non-convex energy functional has been identified in \cite{ElbGraLenSch10}.

As described in the introduction, a displacement error in image data can be mathematically modeled as follows:
\begin{equation*}
u^d(x):=u(x+d(x)), \text{ for }d: \R^n \rightarrow \R^n,\; \text{ and } \norm{d}_{L^\infty} \leq M,
\end{equation*}
where $u^d: \R^n \to \R$ is the measured image, $M$ is some positive real number, and $u$ is the ideal physical acquisition of the image.
Assuming that the magnitude of the error bound $M$ is small, following \cite{LenSch11,DonPatSchOek15,DonSch17} we may consider a first-order Taylor expansion of the function $u$ along the normal direction of the level sets of $u$:
\begin{equation}
\label{eq:taylor}
u^d(x) = u(x+d(x)) \approx u(x) + \abs{d(x)}\left\langle\frac{\nabla u(x)}{\abs{\nabla u(x)}}, \nabla u(x) \right\rangle ,
\end{equation}
assuming $\abs{\nabla u(x)} >0$.
Then the magnitude of the displacement error can be approximated as follows:
\begin{equation}
\label{eq:disp}
\abs{d(x)} \approx \frac{\ud(x)-u(x)}{\abs{\nabla u(x)}}.
\end{equation}
In \cite{LenSch11}, a generalized total variation regularization is employed to recover $u$ given $\ud$, which leads to a non-convex and non-smooth energy functional
\begin{equation}\label{eq:non_convex}
\mathcal{E}(u;\ud) := \frac{1}{2}\int_{\R^n}
\frac{(u(x)-\ud(x))^2}{\abs{\nabla u(x)}^q}dx
+\alpha  \int_{\R^n}\abs{\nabla u(x)} dx,
\end{equation}
where $\alpha>0$ is a regularization parameter.
The parameter $q\in (0,2]$ is introduced in order to simultaneously control the displacement error $d(x)$ and also the intensity error $\delta(x)$ in the image data.
In the case of $q=1$ we observe that the first term of $\mathcal{E}(u;\ud)$ in \eqref{eq:non_convex} is a measure reflecting both the displacement error  and the intensity error by using the geometric mean of $\frac{(u(x)-\ud(x))^2}{\abs{\nabla u(x)}^2}$ and $(u(x)-\ud(x))^2$.

Using formal calculations, e.g., the semi-group techniques of \cite{ElbGraLenSch10,LenSch11,SchGraGroHalLen09} or the semi-implicit iterative scheme in \cite{DonSch17} by identifying $\alpha$ as a discrete time step (the latter is  analogous to our explanation involving the ROF model and the first-order TVF in the previous section), we formally derive the following nonlinear flows:
\begin{equation} \label{eq:formal_flow}
\left\{ \begin{array}{lll}
\dot{u} (t)&= \abs{\nabla u(t)}^q \operatorname{div} \left(\frac{\nabla u(t)}{\abs{\nabla u(t)}} \right)&
\text{ in } \R^n \times (0,\infty),\\
u(0) &=\ud  & \text{ in } \R^n  \times 0.
\end{array}\right.
\end{equation}
We can see that \eqref{eq:mcf_u} emerges for $q=1$ in \eqref{eq:formal_flow}.
In \cite{LenSch11,DonPatSchOek15,DonSch17}, it was documented that the nonlinear flows \eqref{eq:formal_flow} are able to  correct small displacement errors and also to denoise image data where the discrete time step and the stopping time play the roles of regularization parameters.
This motivates us to consider a second-order damping flow based on the first-order flow, which is the equation  below:
\begin{equation}
\label{eq:acc_mcf_u}
\left\{  \begin{array}{lll}
\ddot{u}(t) +\eta \dot{u}(t) & = \abs{\nabla u(t)}\operatorname{div}\left(\frac{\nabla u(t)}{\abs{\nabla u(t)}} \right),& \text{ in } \R^n\times (0,\infty),\\
u( 0)=u_0 , & \quad \dot{u}(0)=v_0 \;& \text{ in } \R^n  \times 0.
\end{array}
\right.
\end{equation}
We refer to this new equation as the damped second-order level-set MCF.

\subsection{Heuristic observation on  the damped second-order level-set MCF}\label{SecHeuristics}
Suppose that each level set of the function $u$ is a hypersurface which is well-defined in $\R^n$.
In this case, it has been verified in \cite{EvaSpr91} that the level-set MCF \eqref{eq:mcf_u} is equivalent to the gradient flow of the volume (perimeter) functional  for  the hypersurface of every single level set, that is the standard hypersurface mean curvature flow.
More precisely, let $\Gamma(t)$ be the immersion of the hypersurface into $\R^n$. Without loss of generality, we consider it to be the zero level set of $u(t)$ that is $u(\Gamma(t),t)\equiv 0$.
Assume $\Gamma(t)$ to be smooth. Then the evolution of $\Gamma(t)$ governed by the first-order equation \eqref{eq:mcf_u} is in fact characterized by the following hypersurface mean curvature flow:
\begin{equation}
\label{eq:mc_flow_ls}
\left\{\begin{array}{lll}
\dot{\Gamma}(t)   =-\mathcal{H} \nu(t)  \;,\\
\Gamma(0) = \Gamma_0,
\end{array}
\right.
\end{equation}
where $\nu(t)$ is the outer unit normal vector associated to the hypersurface of the level set $\Gamma(t)$, and $\mathcal{H} $ is the mean curvature of $\Gamma(t)$.
Note here that $\mathcal{H} \nu(t)=\partial \mathcal{V}(\Gamma(t))$, where $\partial \mathcal{V}$ denotes  the functional gradient of the volume functional $\mathcal{V}$ (e.g., the length of the level set curves in a two dimensional setting). Particularly, for $n$-dimensional hypersurfaces, $\mathcal{H} =-\frac{1}{n} \operatorname{div}\left(\nu(t) \right) $. In the following, we stick to this expression in our discussion, and ignore the constant $\frac{1}{n}$.

The mean curvature flow \eqref{eq:mc_flow_ls} for hypersurfaces or  general manifolds has been a central topic in geometric analysis.
In the level set setting, if the spatial gradient $\nabla u(\Gamma(t),t) \neq 0$, the normal field of the hypersurface of every level set can be represented by $\nu(t)=\frac{\nabla u(\Gamma(t),t)}{\abs{\nabla u(\Gamma(t),t)}}$.

In this context, a relevant question is connected to identifying an evolutionary equation for the hypersurfaces given by the level sets of $u(t)$ associated to the second-order level-set MCF \eqref{eq:acc_mcf_u}.
In the following, we give some heuristics based on formal calculations.

Let us again take $\Gamma(t)$ to be the immersion of the zero level set of the function $u(t)$, and consider the following equation:
\begin{equation}
\label{eq:acc_mcf}
\left\{\begin{array}{lll}
\ddot{\Gamma}(t) +  \left( \eta \operatorname{Id}+\nu(t)\otimes \frac{\nabla\dot{u}(\Gamma(t),t)}{\abs{\nabla u(\Gamma(t),t)}} \right) \dot{\Gamma}(t)  = -  \operatorname{div}\left(\nu(t) \right) \nu(t)  \;;\\
\dot{\Gamma}(0)  =  \gamma_0, \;\quad \Gamma(0) =  \Gamma_0,
\end{array}
\right.
\end{equation}
where $\operatorname{Id}$ represents the $n\times n$ identity matrix, and $\otimes$ denotes the tensor product of vectors, and $\Gamma_0$ is a close hypersurface.
While the tensor product term may appear surprising in the context of \eqref{eq:acc_mcf} at first glance, its role will soon become clear.
Using the fact that $\nu$ is a unit normal vector field, we have
\[P_\tau \frac{\nabla\dot{u}(\Gamma(t),t)}{\abs{\nabla u(\Gamma(t),t)}}= \dot{\nu}(t),\]
where $P_\tau$ is the projection operator onto the tangent space of $\Gamma(t)$.

Now, we look for the connection between \eqref{eq:acc_mcf} and \eqref{eq:acc_mcf_u} for the evolution of the level sets of the function $u$.
We first notice that $u(\Gamma(t),t) \equiv 0$ (or any other constant) which gives
\begin{equation}
\label{eq:total_time_der_u}
\dot{u}(\Gamma(t),t)=-\langle\nabla u(\Gamma(t),t),\dot{\Gamma}(t)\rangle.
\end{equation}
Differentiating with respect to time on both sides of \eqref{eq:total_time_der_u}, we get:
\begin{equation}
\label{eq:derivative}
\ddot{u}(\Gamma(t),t) =  -\langle (\partial_t \nabla u(\Gamma(t),t) ) , \dot{\Gamma}(t)\rangle - \langle\nabla u(\Gamma(t),t), \ddot{\Gamma}(t)\rangle.
\end{equation}
Note here that we are not calculating the total time derivative of $u$ but rather the partial derivative with respect to $t$.
Since $\Gamma(t)$ now follows the trajectory given by \eqref{eq:acc_mcf}, we observe that
\[\left(\nu(t)\otimes \frac{\nabla \dot{u}(\Gamma(t),t)}{\abs{\nabla u(\Gamma(t),t)}}\right) \dot{\Gamma}(t)=\left\langle\frac{\nabla \dot{u}(\Gamma(t),t)}{\abs{\nabla u(\Gamma(t),t)}},\dot{\Gamma}(t)\right\rangle \nu(t), \]
leading to
\begin{equation}
\label{eq:geometry_equ}
\begin{aligned}
\ddot{u}(\Gamma(t),t) & =  -\langle(\partial_t \nabla u(\Gamma(t),t) )  , \dot{\Gamma}(t)\rangle -\eta\dot{u}(\Gamma(t),t) +\langle\nabla u(\Gamma(t),t),\operatorname{div}\left(\nu(t) \right)  \nu(t) \rangle \\
& \quad +\left\langle \frac{\nabla \dot{u}(\Gamma(t),t) }{\abs{\nabla u(\Gamma(t),t) }}, \dot{\Gamma}(t)\right\rangle \langle\nabla u(\Gamma(t),t)  , \nu(t)\rangle  ,
\end{aligned}
\end{equation}
where we also use \eqref{eq:derivative}.
Using the fact that $\nu(t)=\frac{\nabla u(\Gamma(t),t) }{\abs{\nabla u(\Gamma(t),t) }} $ and $\abs{\nabla u(\Gamma(t),t) } \neq 0$ we verify that
\[
\left\langle\frac{\nabla \dot{u}(\Gamma(t),t) }{\abs{\nabla u(\Gamma(t),t) }}, \dot{\Gamma}(t)\right\rangle \langle\nabla u(\Gamma(t),t) , \nu(t)\rangle=\langle \nabla \dot{u}(\Gamma(t),t)  , \dot{\Gamma}(t)\rangle \;.
\]
Assuming for the moment that $u$ has sufficient regularity and interchanging the order of the time and spatial derivatives in the first term of the right hand side of \eqref{eq:geometry_equ}, that is $\dot{\nabla u}(\Gamma(t),t) =(\partial_t \nabla u(\Gamma(t),t) )$,  equation \eqref{eq:geometry_equ} turns into \eqref{eq:acc_mcf_u} restricted to the level set $\Gamma(t)$, i.e.,
\[\ddot{u}(\Gamma(t),t) +\eta \dot{u}(\Gamma(t),t)  = \abs{\nabla u(\Gamma(t),t)}\operatorname{div}\left(\frac{\nabla u(\Gamma(t),t)}{\abs{\nabla u(\Gamma(t),t)}} \right).\]
This indicates that every smooth level set $\Gamma(t)$ of the solution of \eqref{eq:acc_mcf_u} evolves according to equation \eqref{eq:acc_mcf}.
Basically, \eqref{eq:acc_mcf} is a vectorial form of second-order dynamics for the mean curvature flow of hypersurfaces. However, the damping coefficient has a matrix form and involves the external function $u$. This shows that \eqref{eq:acc_mcf} is not an independent geometric PDE. Rather it needs to be coupled to  \eqref{eq:acc_mcf_u}.
This is further expanded in the following remark.
\begin{remark}\label{rem:monotonical_geometric_flow}
Consider following second-order geometric flow for general smooth hypersurfaces of some immersion function $\Gamma(t)$:
	\begin{equation}
	\label{eq:second_level_set}
	\ddot{\Gamma}(t) +  \left( \eta \operatorname{Id}+\nu(t)\otimes \dot{\nu}(t) \right)\dot{\Gamma}(t)  = -  \operatorname{div}\left(\nu(t) \right) \nu(t).
	\end{equation}
	Define an entropy (Lyapunov function) for \eqref{eq:second_level_set} through
	\begin{equation}
	\label{eq:entropy}
	\mathcal{M}(t): = \frac{1}{2}\norm{\langle \nu(t),\dot{\Gamma}(t)\rangle}^2 + \mathcal{V}(\Gamma(t)),
	\end{equation}
	where $\mathcal{V}(\Gamma(t))$ presents the volume functional of the hypersurface, and $\nu(t)$ is the outward unit normal field of the hypersurface.
	Since
	\begin{eqnarray*}
		\frac{d \mathcal{M}(t)}{d t} = \left(\langle \nu(t),\dot{\Gamma}(t)\rangle   ,  (\langle \dot{\nu}(t) ,\dot{\Gamma}(t)\rangle + \langle \nu(t) ,\ddot{\Gamma}(t)\rangle)\right) + \left( \operatorname{div}(\nu(t))\nu(t),\dot{\Gamma}(t) \right),
	\end{eqnarray*}
	taking into account \eqref{eq:second_level_set} and by direct calculations, we deduce that
	\[\frac{d \mathcal{M}(t)}{d t} =-\eta  \norm{\langle \nu(t), \dot{\Gamma}(t)\rangle }^2\leq 0.\]
	This shows that the entropy $\mathcal{M}(t)$ is monotonically decreasing following the trajectory of the flow \eqref{eq:second_level_set}.
	Now assume $\Gamma(t)$ again to be the hypersurfaces of the level sets of a scalar function $u$.
	Isolating the term $\ddot{\Gamma}$ in \eqref{eq:second_level_set} and inserting it into \eqref{eq:derivative}, and also taking into account \eqref{eq:total_time_der_u} we derive a new equation corresponding to \eqref{eq:second_level_set} as follows:
	\[\ddot{u}(\Gamma(t),t) +\left(\eta - \frac{\langle\nabla u ,\nabla \dot{u}\rangle(\Gamma(t),t)}{\abs{\nabla u}^2(\Gamma(t),t)} \right) \dot{u}(\Gamma(t),t)  = \abs{\nabla u(\Gamma(t),t)}\operatorname{div}\left(\frac{\nabla u(\Gamma(t),t)}{\abs{\nabla u(\Gamma(t),t)}} \right) .\]
	Formally, for $\nabla u \neq 0$, this suggests the following equation for the scalar function $u$
	\begin{equation}
	\label{eq:new_acc_mcf_u}
	\ddot{u}  +\left(\eta -\frac{1}{2}\partial_t \log (\abs{\nabla u}^2)\right) \dot{u}  = \abs{\nabla u}\operatorname{div}\left(\frac{\nabla u}{\abs{\nabla u}} \right).
	\end{equation}
	Equations \eqref{eq:new_acc_mcf_u} and \eqref{eq:second_level_set} appear novel and they seem to be geometrically meaningful to study.
	The new equation \eqref{eq:new_acc_mcf_u} looks rather more complicated than \eqref{eq:acc_mcf_u}.
	Since our motivation here is to develop algorithms for image applications, we will skip detailed discussions on the equations \eqref{eq:new_acc_mcf_u} and \eqref{eq:second_level_set}, but rather focus on \eqref{eq:acc_mcf_u} in this paper.
We notice that hyperbolic mean curvature flow for hypersurfaces without the damping term has been pioneered as open problem in \cite{Yau00}. Later in \cite{LefSmo08,HeKonLiu09}, its local well-posedness has been studied, and in \cite{GinSva16}, its numerical simulation using level set approach has been implemented. 
\end{remark}
Using the entropy \eqref{eq:entropy} and following the orbit of equation \eqref{eq:acc_mcf},  we infer
\begin{eqnarray*}
	\frac{d\mathcal{M}(t)}{dt}
	&=&-\eta \norm{\langle \nu(t), \dot{\Gamma}(t)\rangle }^2 - \left(\langle \nu(t),\dot{\Gamma}(t)\rangle,\langle P_\nu \frac{\nabla \dot{u}(\Gamma(t),t)}{\abs{\nabla u(\Gamma(t),t)}} , \dot{\Gamma}(t)\rangle \right)\\
	&=& -\eta \norm{\langle \nu(t), \dot{\Gamma}(t)\rangle }^2 - \frac12 \left(\partial_t \log(\abs{\nabla u(\Gamma(t),t)}^2)\langle \nu(t),\dot{\Gamma}(t)\rangle,\langle \nu(t),\dot{\Gamma}(t)\rangle \right),
\end{eqnarray*}
where $P_\nu$ is the normal projection operator onto the hypersurface $\Gamma(t)$.
The last term makes the monotonicity of $\mathcal{M}$ unclear.
It implies that large $\eta$ are preferred for monotonicity, a practical point which we pick up in Section \ref{sec:algorithm} along with the algorithmic development.

\subsection{On the solvability of the damped second-order level-set MCF}
In order to study the solvability of the equation \eqref{eq:acc_mcf_u}, we rewrite it to obtain the following explicit form (note that $\nabla u(t)=(u_{x_1}(t), u_{x_2}(t))^\top$):
\begin{eqnarray}\label{eq:acc_mcf_u_ex}
\left\{\begin{array}{ll}
\ddot{u}(t)  +\eta \dot{u}(t)   = \sum_{i,j}\left( \delta_{ij} - \frac{u_{x_i}(t)u_{x_j}(t)}{\abs{\nabla u(t)}^2} \right) u_{x_i x_j}(t), &\textrm{~in~} \R^n\times (0,\infty), \\
\dot{u}(0)=v_0,\quad  u(0)=u_0,  & \textrm{~in~} \R^n\times 0,
\end{array}\right.
\end{eqnarray}
where $\delta_{ij}$ is the Kronecker delta function, i.e., $\delta_{ij}=1$ for $i=j$, and $\delta_{ij}=0$ for $i\neq j$.
For this problem, because of its geometric meaning, it is natural to study the flow in the domain $\R^n\times (0,\infty)$ instead of $\Omega\times (0,\infty)$ as in the TVF case with bounded $\Omega$.
For the latter, that is $u_0$ is compactly supported in $\R^n$, the results developed below will still hold by imposing Neumann boundary conditions on $\partial \Omega$ for sufficiently regular boundary.
As the right hand sides of \eqref{eq:acc_mcf_u} and \eqref{eq:acc_mcf_u_ex}, respectively, are not related to gradient (or subgradient) of a convex functional, the standard techniques using test functions are not applicable. Thus our previous approach to the second-order TVF is not suitable for this problem.
Also, the concept of the viscosity solution, which has been developed for the first-order level-set MCF \cite{EvaSpr91}, is not applicable either because of the degenerate hyperbolic structure of the equation \eqref{eq:acc_mcf_u_ex}.
Moreover, it can be checked that the nonlinear coefficients in \eqref{eq:acc_mcf_u_ex}, namely for $\abs{p}>0$,
\[\bar{a}_{i,j}(p):= \left( \delta_{ij} - \frac{p_ip_j}{\abs{p}^2} \right), \quad p_i:= u_{x_i},\]
satisfy $\sum_{i,j}\bar{a}_{i,j}(p)\zeta_i\zeta_j\geq 0$ for almost all $\zeta=(\zeta_1,\cdots,\zeta_n)^\top\in \R^n$.
Therefore  \eqref{eq:acc_mcf_u_ex} is a fully degenerate hyperbolic equation (sometimes also referred to as weakly hyperbolic) in the domain $\R^n\times (0,\infty)$.

The singularity of $\bar{a}_{i,j}$ at $p=0$ is another issue which has to be considered.
For the purpose of avoiding singularities and also to eliminate the degeneracy in  the equation \eqref{eq:acc_mcf_u_ex}, we construct a regularized version which is quasilinear but strictly hyperbolic.
This is also motivated by the numerical realization of \eqref{eq:acc_mcf_u_ex} from a practical algorithmic point of view.

\subsection{Solution of a regularized equation}
We concentrate on the following quasilinear but strictly hyperbolic equation as an approximation of \eqref{eq:acc_mcf_u_ex}:
\begin{eqnarray}\label{eq:acc_mcf_u_exApproximate}
\left\{\begin{array}{ll}
\ddot{u}^{\epsilon}(t)  +\eta \dot{u}^{\epsilon}(t)   =\sum_{i,j} \left( \delta_{ij} - \frac{u^{\epsilon}_{x_i}(t) u^{\epsilon}_{x_j}(t) }{ \abs{\nabla u^{\epsilon}(t) }^2+ \epsilon^2} \right) u^{\epsilon}_{x_i x_j}(t), & \textrm{~in~} \R^n\times (0,\infty), \\
\dot{u}^{\epsilon}(0)=v_0, \quad u^{\epsilon}(0)=u_0, & \textrm{~in~} \R^n\times 0,
\end{array}\right.
\end{eqnarray}
where $0<\epsilon  \ll 1$ is fixed.

The approximation \eqref{eq:acc_mcf_u_exApproximate} can be interpreted  as follows. Consider the function $w^{\epsilon}(y,t):= u^{\epsilon}(x,t) - \epsilon x_{n+1}$, where $y=(x,x_{n+1}) \in \R^{n+1}$. Since $|\nabla w^{\epsilon}|^2 = |\nabla u^{\epsilon}|^2 + \epsilon^2$, the equation in \eqref{eq:acc_mcf_u_exApproximate} becomes
\begin{eqnarray*}
	\left\{\begin{array}{ll}
		\ddot{w}^{\epsilon} (t) +\eta \dot{w}^{\epsilon} (t)  = \sum_{i,j}\left( \delta_{ij}  - \frac{w^{\epsilon}_{y_i}(t) w^{\epsilon}_{y_j}(t)}{\abs{\nabla w^{\epsilon}(t) }^2 }\right) w^{\epsilon}_{x_i x_j}(t), & \textrm{~in~} \R^{n+1}\times (0,\infty), \\
		\dot{w}^{\epsilon}(0)=v_0 , \quad w^{\epsilon}(0)=w^{\epsilon}_0, & \textrm{~in~} \R^{n+1}\times 0,
	\end{array}\right.
\end{eqnarray*}
where $w^{\epsilon}_0(y)=u_0(x)- \epsilon x_{n+1}$.
A geometric meaning for this approximation of first-order level-set MCF has been given in \cite{EvaSpr91}.
There, it is depicted that $v^\epsilon$ is a function defined on a higher dimensional domain, whose zero-level set is a graph given by \[\Gamma^\epsilon(t)=\set{y=(x,x_{n+1})|x_{n+1}=u^{\epsilon}(x,t) /\epsilon}.\]
Then it is argued that the complicated and possibly singular evolution of the level sets $\Gamma(t)$ of $u$ is approximated by a family of well behaved smooth evolutions of the level sets $\Gamma^\epsilon(t)$ of a function $v^\epsilon$ from a higher dimensional space, in the sense that $\Gamma^\epsilon(t) \simeq \Gamma(t)\times \R$ for sufficiently small $\epsilon$ at given $t>0$. We adopt the same geometric intuition for the solution of \eqref{eq:acc_mcf_u_exApproximate} as \cite{EvaSpr91} did for the first-order level-set MCF. 
This observation justifies  the use of such a regularization properly approximating the original solution when $\epsilon$ is small.

To study the existence and uniqueness of solutions to \eqref{eq:acc_mcf_u_exApproximate}, we rely on the results on linear hyperbolic equations (e.g., those in \cite{Lax06}). In particular, we consider the following equation
\begin{eqnarray}\label{eq:linear_hyper}
\left\{\begin{array}{ll}
\ddot{\zeta}(t)  +\eta \dot{\zeta}(t)   -\sum_{i,j} a_{i,j}(\nabla w(t)) \zeta_{x_i x_j}(t) =f(t), & \textrm{~in~} \R^n \times (0,\infty), \\
\dot{\zeta}(0)=\zeta_1, \quad \zeta(0)=\zeta_0, & \textrm{~in~} \R^n \times 0,
\end{array}\right.
\end{eqnarray}
where $w(t)$ is a given function.

To simplify notations, we write $H^k:=W^{k,2}(\R^n)$ for $k\in \N$, and
use $D^k$ to represent all derivatives with respect to temporal and spatial variables of differential orders between $[0, k]$.
In the following we summarize assumptions for existence results on linear hyperbolic PDEs. 
\begin{assumption}\label{assu:linear_hyper}
	\begin{itemize}
		\item[(i)] The functions $a_{i,j}$ are smooth and satisfy $a_{i,j}=a_{j,i}$ for $i,j \in \set{1,\cdots,n}$. Moreover there exists a continuous function $a(p)>0$ such that $a(p)\abs{q}^2 \leq  \sum_{i,j} a_{i,j}(p)q_iq_j\leq \sigma_1 \abs{q}^2 $ for  all $p\in \R^n$ and $q\in \R^n$, for some constant $\sigma_1>0$.
		\item[(ii)]  $w\in C^0([0,\infty);H^k)$, and the initial data are properly bounded, i.e. \[\sum_{l=1}^k \norm{D^l \zeta(0)}_{L^2}\leq M_0, \quad \text{ for some }  M_0>0.\]
	\end{itemize}
\end{assumption}
The first assumption implies that, for all bounded $p\in \R^n$ with $\norm{p}\leq P_0$, there exists a fixed $\sigma>0$ such that $a(p)\abs{q}^2\geq \sigma \abs{q}^2$ for all $q\in \R^n$. Note that $\zeta_0\in H^k$ and $\zeta_1\in H^{k-1}$ have been integrated into $D^k \zeta(0)$ because of the second assumption.
The existence and uniqueness of the solution for linear hyperbolic equations of type \eqref{eq:linear_hyper} have been established in \cite[Chapter 5]{Dio62}. One may also refer to \cite{Lax06,Eva10} for more results. We summarize the existence, uniqueness and energy estimate here:
\begin{theorem}\label{thm:linear_hyper}
	Let Assumption \ref{assu:linear_hyper} hold, and
	\[f \in C^s([0,\infty);H^{k-s-1})\quad  \text{ for }  k\in \N  \text{ and } s\in [0,k-1] ( k\geq s+1\geq 1).\]
	 Then the linear  hyperbolic equation \eqref{eq:linear_hyper} admits a unique solution $\zeta \in C^0([0,\infty);H^k)\cap  C^s([0,\infty);H^{k-s})$
	and the following estimate holds true:
	\begin{equation}\label{eq:hyper_est}
	\norm{D^l\zeta(t)}_{L^2}^2\leq e(t)\left(\norm{D^l\zeta(0)}_{L^2}^2 + \norm{D^{l-2}f(0)}_{L^2}^2 +\int_0^t\norm{D^{l-1}f(\tau)}_{L^2}^2d\tau \right)\; 
	\end{equation}
	 for all $ t\in (0,\infty)$ and every integer $l \in [1,s+1]$, where the case $l=1$ applies if $f(0)\equiv 0$.
	Here $e(t)$ is an exponential function of $t$.
	In particular, if $f\equiv 0$, then there exists some positive constant $\bar{t}_0>0$ and $c_0<1$, such that for $\sum_{l=1}^k \norm{D^l \zeta (0)}_{L^2}\leq c_0 M$, it holds that
	\begin{equation}
	\label{eq:constantM}
   \sum_{l=1}^k	\norm{D^l \zeta(t)}_{L^2} \leq M \quad \text{ for all } t\in (0,\bar{t}_0].
	\end{equation}
\end{theorem}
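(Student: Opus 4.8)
The plan is to treat \eqref{eq:linear_hyper} as a classical linear second-order strictly hyperbolic Cauchy problem whose coefficients are smooth in the spatial variable and merely continuous in time, and to split the argument into existence and uniqueness, the a priori energy estimate \eqref{eq:hyper_est}, and the short-time bound \eqref{eq:constantM}. For existence and uniqueness I would rewrite \eqref{eq:linear_hyper} as a first-order system for $\mathbf{U}=(\dot\zeta,\nabla\zeta)^\top$ and symmetrise it using the positive definite matrix $\bigl(a_{i,j}(\nabla w(t))\bigr)$, which is admissible precisely because of the symmetry and uniform ellipticity in Assumption \ref{assu:linear_hyper}(i); the damping term $\eta\dot\zeta$ enters merely as a bounded zeroth-order perturbation. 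The resulting symmetric hyperbolic system with coefficients that are $C^0$ in time and $C^\infty$ in space is covered by the classical Friedrichs--Kato theory, or directly by \cite{Dio62,Lax06,Eva10}, and yields a unique solution with the stated regularity $\zeta\in C^0((0,\infty);H^k)\cap C^s((0,\infty);H^{k-s})$ once $f\in C^s([0,\infty);H^{k-s-1})$. Alternatively one stays at second order and runs a spectral Galerkin scheme in $H^k(\R^2)$, derives the estimates below uniformly in the discretisation and passes to the limit; uniqueness then follows by applying the energy estimate with $f\equiv0$ and vanishing data to the difference of two solutions.

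The energy estimate \eqref{eq:hyper_est} is the technical core, and I would prove it by induction on $l$. For $l=1$, multiply the equation in \eqref{eq:linear_hyper} by $\dot\zeta$ and integrate over $\R^2$; integration by parts together with $a_{i,j}=a_{j,i}$ yields
\[
\frac{d}{dt}\left(\tfrac12\norm{\dot\zeta(t)}^2+\tfrac12\int_{\R^2}\sum_{i,j}a_{i,j}(\nabla w)\zeta_{x_i}\zeta_{x_j}\,dx\right)+\eta\norm{\dot\zeta(t)}^2=(f,\dot\zeta)+\tfrac12\int_{\R^2}\sum_{i,j}\partial_t\bigl(a_{i,j}(\nabla w)\bigr)\zeta_{x_i}\zeta_{x_j}\,dx.
\]
By the lower ellipticity bound in Assumption \ref{assu:linear_hyper}(i) the bracketed quantity is comparable to $\norm{D^1\zeta(t)}_{L^2}^2$, using that $\norm{\nabla w(t)}_{L^\infty}$ stays bounded via $H^k\hookrightarrow C^1(\R^2)$ for $k$ sufficiently large together with Assumption \ref{assu:linear_hyper}(ii); the term $(f,\dot\zeta)$ is absorbed through Cauchy--Schwarz and $\norm{f}^2$, and the last term is controlled by the regularity of the coefficient along $w$. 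Gronwall's inequality then gives \eqref{eq:hyper_est} with $l=1$, the $f(0)$-contribution being absent exactly when $f(0)\equiv0$. For the inductive step I would apply $D^{l-1}$ to \eqref{eq:linear_hyper}, so that $D^{l-1}\zeta$ solves an equation of the same type with forcing $D^{l-1}f$ plus the commutator $[D^{l-1},\sum_{i,j}a_{i,j}(\nabla w)\partial_{x_ix_j}]\zeta$; Moser-type nonlinear product and Gagliardo--Nirenberg estimates bound this commutator by norms of $\zeta$ of order at most $l$ (lower order in the highest derivative) times norms of the coefficient, so the $l=1$ argument can be repeated and Gronwall closes the recursion. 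The damping term only contributes the favourable $\eta\norm{\partial_t D^{l-1}\zeta(t)}^2$ on the left-hand side and is harmless throughout, and all constants depend only on $\eta$, on $\sigma,\sigma_1$, on the coefficient regularity and on $M_0$, which is why $e(t)$ can be taken as an explicit exponential independent of $\zeta$.

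For the short-time bound, set $f\equiv0$; summing \eqref{eq:hyper_est} over $l=1,\dots,s+1$ and using the equation itself to trade the time derivatives contained in $D^k\zeta$ for spatial ones, so that $\sum_l\norm{D^l\zeta(t)}_{L^2}^2$ dominates $\norm{\zeta(t)}_{H^k}^2$ while $\norm{D^l\zeta(0)}_{L^2}$ is controlled by the initial data (hence by $\norm{\zeta(0)}_{H^k}$ and $M_0$) through $\ddot\zeta(0)=-\eta\zeta_1+\sum_{i,j}a_{i,j}(\nabla w(0))(\zeta_0)_{x_ix_j}$ and its further differentiations, one obtains $\norm{\zeta(t)}_{H^k}^2\le C\,e(t)\,\norm{\zeta(0)}_{H^k}^2$. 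Since $e(\cdot)$ is continuous, choose $\bar t_0>0$ with $e(t)\le 2e(0)$ on $[0,\bar t_0]$ and then $c_0<1$ small enough, depending only on $C$ and $e(0)$, so that $\norm{\zeta(0)}_{H^k}\le c_0M$ forces $\norm{\zeta(t)}_{H^k}\le M$ on $[0,\bar t_0]$, which is \eqref{eq:constantM}. I expect the genuine obstacle to lie in the higher-order step of the energy estimate: bounding the commutators $[D^{l-1},a_{i,j}(\nabla w)\partial_{x_ix_j}]\zeta$ uniformly requires enough Sobolev regularity of $w$ for $a_{i,j}(\nabla w)$ and sufficiently many of its derivatives to lie in $L^\infty$ or $H^{k-1}$ (via embeddings and Moser estimates for the nonlinear composition), plus careful bookkeeping so that the final constant only involves $\sigma,\sigma_1,\eta,M_0$; on the unbounded domain $\R^2$ one must additionally ensure that every integration by parts and every Galerkin or mollification limit is justified in $H^k(\R^2)$, which is routine but demands attention.
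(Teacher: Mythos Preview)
The paper does not actually prove this theorem: it is stated as a summary of classical results, with the preceding sentence explicitly saying that existence, uniqueness and energy estimates ``have been established in \cite[Chapter~5]{Dio62}'' and referring also to \cite{Lax06,Eva10}. So there is no paper-proof to compare against; the theorem is quoted from the literature.

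Your sketch is precisely the standard route those references take (reduction to a symmetric hyperbolic first-order system or, equivalently, second-order energy estimates obtained by multiplying by $\dot\zeta$, then induction on the order via commutator bounds, closed with Gronwall), and it is correct in outline. One point deserves care: in your $l=1$ identity you differentiate the coefficients in time, writing $\partial_t\bigl(a_{i,j}(\nabla w)\bigr)$, but Assumption~\ref{assu:linear_hyper}(ii) only gives $w\in C^0([0,\infty);H^k)$, not $C^1$ in time. In the cited references this is handled either by first mollifying $w$ in $t$, deriving the estimate with constants depending only on $\sup_t\norm{w(t)}_{H^k}$ (not on $\norm{\dot w}$), and passing to the limit, or by replacing the time derivative of the quadratic form by a difference-quotient argument. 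As written, your energy identity implicitly uses more time regularity than is assumed; the fix is routine but should be made explicit if you intend this as a self-contained proof rather than a pointer to \cite{Dio62,Lax06}.
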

Connecting to equation \eqref{eq:acc_mcf_u_exApproximate}, where $a_{i,j}(p)=\delta_{ij}- \frac{p_i p_j}{\abs{p}^2+\epsilon^2}$, we have $a(p)=\frac{\epsilon^2}{\abs{p}^2+\epsilon^2}$.
Moreover, it is not hard to verify that all the assumptions on $a_{i,j}$ are fulfilled for this choice.
Also it can  be checked that $a(p)\abs{q}^2\geq \sigma \abs{q}^2$ for some $\sigma>0$ as soon as $\abs{p}$ (or in another words the norm $\norm{u^\epsilon(t)}_{H^k}$) is uniformly bounded from above for $t\in [0,\bar{t}_0]$.
With this preparation, we establish now a local (short time) existence and uniqueness of solutions of \eqref{eq:acc_mcf_u_exApproximate}.
To simplify the presentation, we omit the superscript $\epsilon$ in the following theorem as it is a fixed parameter.

\begin{theorem}
	\label{thm:short_time_approx}
	For every fixed $\epsilon\in(0,1)$, given $u_0\in H^4$ $v_0\in H^3$, a constant $0<c_0<1$, and $\sum_{l=1}^4\norm{D^l u_0}_{L^2}  \leq c_0 M$, there exists $t_0>0$ such that  equation \eqref{eq:acc_mcf_u_exApproximate} admits a unique solution $u\in \bigcap_{s=0}^2 C^s([0,t_0];H^{4-s}) \cap L^{\infty}([0,t_0];H^4) $. Moreover, $\sum_{l=1}^4 \norm{D^l u(t)}_{L^2}\leq M$ holds for all $ t\in [0,t_0]$.
\end{theorem}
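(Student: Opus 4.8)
The plan is to build the solution by a Banach (Picard) fixed‑point iteration resting on the linear theory of Theorem~\ref{thm:linear_hyper}. Fix $\epsilon\in(0,1)$ and $M$ as in the statement and put $P_0:=C_S M$, where $C_S$ is the norm of the embedding $H^4(\R^2)\hookrightarrow C^1(\R^2)$. For $\abs{p}\le P_0$ the coefficients $a_{i,j}(p)=\delta_{ij}-p_ip_j/(\abs{p}^2+\epsilon^2)$ satisfy Assumption~\ref{assu:linear_hyper}(i) with a uniform ellipticity constant $\sigma=\sigma(M,\epsilon)>0$ (since $a(p)=\epsilon^2/(\abs{p}^2+\epsilon^2)\ge\epsilon^2/(P_0^2+\epsilon^2)$), a uniform upper bound $\sigma_1$, and with $a_{i,j}$ and all their derivatives bounded on $\set{\abs{p}\le P_0}$ by constants depending only on $\epsilon$. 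Set $u^{(0)}(t)\equiv u_0$ and, inductively, let $u^{(n+1)}$ be the solution furnished by Theorem~\ref{thm:linear_hyper} of the linear equation \eqref{eq:linear_hyper} with coefficients $a_{i,j}(\nabla u^{(n)}(t))$, right‑hand side $f\equiv0$, and data $(u_0,0)$.

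First I would establish, by induction on $n$, the uniform a~priori bound $\norm{u^{(n)}(t)}_{H^4}\le M$ for $t\in(0,\bar t_0]$, where $\bar t_0>0$ and $c_0<1$ are the lifespan and admissibility threshold from the homogeneous case ($f\equiv0$) of Theorem~\ref{thm:linear_hyper}; the key point is that $\bar t_0$ and $c_0$ depend only on $M$, on $\sigma,\sigma_1$, and on the uniform coefficient bounds above, hence can be frozen once and for all as long as the previous iterate stays in $\set{\norm{\cdot}_{H^4}\le M}$. The base case is $\norm{u^{(0)}(t)}_{H^4}=\norm{u_0}_{H^4}\le c_0M\le M$; and if $\norm{u^{(n)}(t)}_{H^4}\le M$ on $(0,\bar t_0]$, then Assumption~\ref{assu:linear_hyper} holds uniformly for $a_{i,j}(\nabla u^{(n)})$, so $u^{(n+1)}\in\bigcap_{s=0}^2 C^s((0,\bar t_0];H^{4-s})$ exists and satisfies $\norm{u^{(n+1)}(t)}_{H^4}\le M$ there by \eqref{eq:constantM}. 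Thus $(u^{(n)})$ is uniformly bounded in $C^0((0,\bar t_0];H^4)\cap C^1((0,\bar t_0];H^3)\cap C^2((0,\bar t_0];H^2)$. Next I would show the iteration contracts at an intermediate level. With $w^{(n)}:=u^{(n+1)}-u^{(n)}$, subtracting the two linear equations shows that $w^{(n)}$ solves a linear hyperbolic equation with the uniformly admissible coefficients $a_{i,j}(\nabla u^{(n)})$, zero data, and right‑hand side $f^{(n)}=\sum_{i,j}\bigl(a_{i,j}(\nabla u^{(n)})-a_{i,j}(\nabla u^{(n-1)})\bigr)u^{(n)}_{x_i x_j}$. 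Since $\nabla u^{(n)}(0)=\nabla u^{(n-1)}(0)=\nabla u_0$ and $\dot u^{(n)}(0)=\dot u^{(n-1)}(0)=0$, the function $f^{(n)}$ together with its first time and space derivatives vanishes at $t=0$, so \eqref{eq:hyper_est} applies at levels $l=1,2$ with no initial contribution and yields
\[
\sup_{t\in(0,t_0]}\norm{w^{(n)}(t)}_{H^2}^2\le C\,t_0\sup_{t\in(0,t_0]}\left(\norm{f^{(n)}(t)}_{L^2}^2+\norm{D^1 f^{(n)}(t)}_{L^2}^2\right)
\]
for $t_0\le\bar t_0$. Using the uniform $H^4$ bound, the embeddings $H^1(\R^2)\hookrightarrow L^4(\R^2)$ and $H^2(\R^2)\hookrightarrow L^\infty(\R^2)$ (with Gagliardo--Nirenberg for the products) and the smoothness of $a_{i,j}$ on $\set{\abs{p}\le P_0}$, one obtains $\norm{f^{(n)}(t)}_{L^2}+\norm{D^1 f^{(n)}(t)}_{L^2}\le C(M,\epsilon)\norm{w^{(n-1)}(t)}_{H^2}$. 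Choosing $t_0\in(0,\bar t_0]$ so small that $C\,C(M,\epsilon)^2 t_0\le 1/4$ makes $u^{(n)}\mapsto u^{(n+1)}$ a contraction in $C^0([0,t_0];H^2)$.

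Consequently $u^{(n)}\to u$ in $C^0([0,t_0];H^2)$, and correspondingly $\dot u^{(n)}\to\dot u$ in $C^0([0,t_0];H^1)$ and $\ddot u^{(n)}\to\ddot u$ in $C^0([0,t_0];L^2)$. The strong convergence of $\nabla u^{(n)}$ in $C^0([0,t_0];H^1)$ and of $u^{(n)}_{x_i x_j}$ in $C^0([0,t_0];L^2)$ permits passing to the limit in the iteration, so $u$ solves \eqref{eq:acc_mcf_u_exApproximate} on $(0,t_0]$; the uniform bounds and weak lower semicontinuity give $u(t)\in H^4$ with $\norm{u(t)}_{H^4}\le M$, $\dot u(t)\in H^3$, $\ddot u(t)\in H^2$. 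That $u$ actually lies in $\bigcap_{s=0}^2 C^s((0,t_0];H^{4-s})$ — strong, not merely weak, continuity into the top spaces — follows from the standard fact that, in a Hilbert space, weak continuity plus continuity of the norm (the latter obtained by applying \eqref{eq:hyper_est} to $u$ itself, now with admissible coefficients $a_{i,j}(\nabla u)$) implies strong continuity. Uniqueness is obtained along the same lines: if $u,\bar u$ are two solutions on $(0,t_0]$, then $v=u-\bar u$ solves a linear hyperbolic equation with coefficients $a_{i,j}(\nabla u)$, zero data, and right‑hand side $\sum_{i,j}\bigl(a_{i,j}(\nabla u)-a_{i,j}(\nabla\bar u)\bigr)\bar u_{x_i x_j}$; estimating as above and invoking Gr\"onwall's inequality forces $v\equiv0$.

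I expect the main obstacle to be making the two a~priori steps interlock. The linear theory only grants a lifespan $\bar t_0$ and a threshold $c_0$ once the coefficients are fixed, so one must verify that these can be taken uniform along the whole iteration, which hinges on the bound $\norm{u^{(n)}}_{H^4}\le M$ propagating — a closed loop that shuts only because \eqref{eq:constantM} reproduces \emph{exactly} the same constant $M$. The second delicate point is that no contraction is available at the top level $H^4$, since differencing the quasilinear term costs one derivative; one is forced into an intermediate space, and in two space dimensions $H^2$ is the workable choice, which is why the borderline Sobolev/Gagliardo--Nirenberg embeddings and the vanishing of $f^{(n)}$ (and its first derivatives) at $t=0$ — needed to extract the factor $t_0$ driving the contraction — have to be used with some care.
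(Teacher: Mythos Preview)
Your approach is essentially the paper's: freeze the coefficients, iterate, propagate the uniform $H^4$ bound via \eqref{eq:constantM}, and contract at a lower regularity by shrinking the time interval, using that the forcing of the difference equation vanishes at $t=0$. The paper in fact contracts at level $l=1$ (in the norm $\norm{D(\cdot)}_{L^2}$, which already carries one time derivative), needing only $\norm{f^{(n)}}_{L^2}\le C\norm{\nabla w^{(n-1)}}_{L^2}$; your choice of $l=2$ works too but your stated bound $\norm{D^1 f^{(n)}(t)}_{L^2}\le C(M,\epsilon)\norm{w^{(n-1)}(t)}_{H^2}$ is not quite correct as written, since $D^1 f^{(n)}$ includes $\partial_t f^{(n)}$ and the chain rule produces a term $\nabla_p a_{i,j}(\nabla u^{(n-1)})\cdot\nabla\dot w^{(n-1)}$ that the spatial $H^2$ norm of $w^{(n-1)}$ does not control. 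Either drop to $l=1$ as the paper does, or enlarge the contraction norm to $\sup_t\bigl(\norm{w}_{H^2}+\norm{\dot w}_{H^1}\bigr)$, which the $l=2$ case of \eqref{eq:hyper_est} bounds simultaneously; with that adjustment your argument goes through and matches the paper's.
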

\begin{proof}
	The main idea is borrowed from the proof of \cite[Theorem 4]{Mat77}. We first define
	a differential operator of the following form:
	\begin{equation}\label{eq:linearlization}
	L_{w} u(t):= \ddot{u}(t) +\eta \dot{u}(t) - \sum_{i,j}\left( \delta_{ij}  - \frac{w_{y_i}(t) w_{y_j}(t)}{\abs{\nabla w(t) }^2 +\epsilon^2}\right) u_{x_i x_j}(t).
	\end{equation}
	Then , we construct some initial function $u^0(t)\in \bigcap_{s=0}^3 C^s([0,\bar{t}_0];H^{4-s})$ which satisfies $u^0(0)=u_0$, $\dot{u}^0(0)=v_0$, and
	\[D_x^4 u^0(0)=D_x^4 u_0,\quad  \quad  c_0 \norm{u^0(t)}_{H^4} \leq \norm{ u_0}_{H^4}\quad \text{ for all } t\in [0,\bar{t}_0]. \]
	Here $D_x$ denotes the spatial derivative.
	Next, for $m\geq 1$, we consider the following equation recursively:
	\begin{equation}\label{eq:recursive_form}
	\left\{
	\begin{array}{l}
	L_{u^{m-1}} (u^m)=0,  \\
	u^m(0)=u_0, \quad \dot{u}^m(0)=v_0.
	\end{array}\right.
	\end{equation}
	Using Theorem \ref{thm:linear_hyper} with $f\equiv 0$, we find that $u^m \in \bigcap_{s=0}^3 C^s([0,\bar{t}_0];H^{4-s})$ for all $m\geq 0$,  and $\sum_{l=1}^4\norm{D^l u^m(t)}_{L^2}\leq M$ for all $t\in [0,\bar{t}_0]$.
	As $L_{u^{m}} (u^{m+1})=0$, $L_{u^m}(u^{m+1}-u^m)=-L_{u^m}(u^m)$ and
	\[\ddot{u}^m+\eta \dot{u}^m= \sum_{i,j}\left( \delta_{ij}  - \frac{u^{m-1}_{y_i} u^{m-1}_{y_j}}{\abs{\nabla u^{m-1} }^2 +\epsilon^2}\right) u^m_{x_i x_j} \]
	we arrive  at the following equation:
	\[L_{u^m}(u^{m+1}-u^m)= \sum_{i,j}\left( \frac{u^{m-1}_{x_i} u^{m-1}_{x_j}}{\abs{\nabla u^{m-1} }^2 +\epsilon^2} - \frac{u^m_{x_i} u^m_{x_j}}{\abs{\nabla u^m }^2 +\epsilon^2}\right) u^m_{x_ix_j}=:A^m(u^m-u^{m-1}).\]
	Let $f(t):=A^m(u^m(t)-u^{m-1}(t))$.
	Using Sobolev embedding (see, e.g., \cite{Eva10}) it is not hard to check that $f \in C^1( [0,\bar{t}_0];H^1)$, as both 
	\[u^m, \; u^{m-1} \in C( [0,\bar{t}_0];H^4)\bigcap C^1( [0,\bar{t}_0];H^3),  \text{ and } H^4,H^3,H^2 \hookrightarrow  L^\infty(\R^n).\]
	Using the estimate \eqref{eq:hyper_est} from Theorem \ref{thm:linear_hyper} again for the above equation (note $f(0)\equiv 0$),
	we have the following estimate:
	\[\norm{D(u^{m+1}(t)-u^m(t))}^2_{L^2}\leq e(t)\int_0^t \norm{A^m(u^m(\tau)-u^{m-1}(\tau))}^2_{L^2} d\tau.\]
	Note that $u^m(t)\in H^4$ is uniformly bounded for $t\in [0,\bar{t}_0]$. We also have $u^m_{x_i}(t)\in H^3$ and $u^m_{x_ix_j}(t)\in H^2$, and the fact that $H^2,H^3 \hookrightarrow L^\infty(\R^n)$ . Then using the following relation
	\begin{eqnarray*}
		& & A^m(u^m-u^{m-1})= \sum_{i,j}\left( \frac{u^{m-1}_{x_i}  u^{m-1}_{x_j} }{\abs{\nabla u^{m-1}  }^2 +\epsilon^2} - \frac{u^m_{x_i}  u^m_{x_j} }{\abs{\nabla u^m  }^2 +\epsilon^2}\right) u^m_{x_ix_j} \\
		&= & \sum_{i,j} \frac{ u^{m-1}_{x_i}u^{m-1}_{x_j}- u^m_{x_i}u^m_{x_j}  }{\abs{\nabla u^{m-1}}^2 +\epsilon^2} u^m_{x_ix_j}
		+ \sum_{i,j} \frac{ u^m_{x_i}u^m_{x_j} \left(\abs{\nabla u^{m}}^2 -\abs{\nabla u^{m-1}}^2 \right) }{(\abs{\nabla u^m}^2 +\epsilon^2)(\abs{\nabla u^{m-1}}^2 +\epsilon^2)}u^m_{x_ix_j} \\
		&= & \sum_{i,j} \frac{ u^{m-1}_{x_i}(u^{m-1}_{x_j} - u^m_{x_j})}{\abs{\nabla u^{m-1}}^2 +\epsilon^2}u^m_{x_ix_j}  + \frac{(u^{m-1}_{x_i} -u^m_{x_i}) u^m_{x_j}  }{\abs{\nabla u^{m-1}}^2 +\epsilon^2}u^m_{x_ix_j}  \\
		& & + \sum_{i,j} \frac{ u^m_{x_i}u^m_{x_j} \left(\abs{\nabla u^{m}} +\abs{\nabla u^{m-1}} \right)\left(\abs{\nabla u^{m}} -\abs{\nabla u^{m-1}} \right) }{(\abs{\nabla u^m}^2 +\epsilon^2)(\abs{\nabla u^{m-1}}^2 +\epsilon^2)}u^m_{x_ix_j} ,
	\end{eqnarray*}
	and applying the triangle inequality, we conclude that
	\begin{equation}\label{eq:regularity_estimate}
	\norm{A^m(u^m(\tau)-u^{m-1}(\tau))}^2_{L^2} \leq C_{m} \norm{D(u^m(\tau)-u^{m-1}(\tau))}_{L^2} \quad \text{ for } \tau \in [0,\bar{t}_0].
	\end{equation}
	Here $C_{m}$ is a constant depending on semi-norms of $u_m$ and $u_{m-1}$, but not on their difference.
	Since both $\norm{u_{m-1}(t)}_{H^4},\; \norm{u_m(t)}_{H^4}$ are uniformly bounded by $M$ for all $t\in [0,\bar{t}_0]$, there exists a constant $c$ independent of $m$ such that
	\[\norm{D(u^{m+1}(t)-u^m(t))}_{L^2}^2\leq c\int_0^t \norm{D(u^m(\tau) -u^{m-1}(\tau))}^2_{L^2}d \tau \; \text{ for all } m\geq 1.\]
	As $c$ is a fixed constant for all $m>1$, and $\sum_{l=1}^4\norm{D^l u_m(t)}_{L^2}\leq M$ for all $t\in [0,\bar{t}_0]$, there exists a sufficiently small $t_0\in [0,\bar{t}_0]$ such that the right hand side is always strictly smaller than $1$.
	A recursive application of this technique then shows that
	\[\lim_{m\to \infty}\norm{D(u^{m+1}(t)-u^m(t))}_{L^2}^2 \to 0 \; \text{ for all } \; t\in (0,t_0].\]
	Because $u_m\in \bigcap_{s=0}^3C^s([0,t_0];H^{4-s}) \hookrightarrow  L^\infty([0,t_0];H^{1}) $ for all $m$, and the latter is a Banach space, there exists a function $u(t)\in L^{\infty}([0,t_0];H^1)$ such that 
	\[\lim_{m\to\infty} \nabla u_m(t)=\nabla u(t) \text{ for } t\in [0,t_0]\]. 
	That is $u^m\to u$ strongly in $L^\infty([0,t_0];H^1)$.
	When we return to \eqref{eq:linearlization} with $\nabla w(t)=\nabla u(t)$, we see that it actually constructs a convergent sequence $(u_m)_{m\in \N} \in L^\infty([0,t_0];H^4)$ to $u\in L^\infty([0,t_0];H^4)$ which is a solution of the nonlinear equation \eqref{eq:acc_mcf_u_exApproximate}.
	Now we show that $u$ satisfies the stated regularity.
	Use the estimate \eqref{eq:hyper_est} for every $u^m$ of the equation \eqref{eq:recursive_form}, and consider $l=2,3$ there.
	Then we get that the sequence $(\dot{u}_m)_{m\in \N}$ and $(\ddot{u})_{m\in \N}$  are uniformly bounded over $[0,t_0]$ for all $m\geq 0$, respectively, and in particular they are equicontinuous due to the fact that their derivatives $(\ddot{u}_m)_{m\in \N}\in  C([0,\bar{t}_0];H^{2})$ and $(\dddot{u})_{m\in \N} \in C([0,\bar{t}_0];H^{1})$ are  uniformly bounded, again derived from the estimate \eqref{eq:hyper_est}. 
	This allows us to apply the Arzel\'a-Ascoli theorem to show that there are subsequences of $(u_m)_{m\in \N}$, still denoted by $(u_m)_{m\in \N}$, so that $\dot{u}_m \to \dot{u}$ in $C^1([0,t_0];H^3)$, and $\ddot{u}_m \to \ddot{u}$ in $C^0([0,t_0];H^2)$,  respectively. 
	Using similar argument, we can conclude that the spatial derives $D_x^2 u_m \to D_x^2 u$ in $C^0([0,t_0];H^2)$ as well, since $(D_x^2 u_m)_m$ is uniformly bounded in $L^\infty([0,t_0];H^2)$ and they are equicontinuous as $(D^3 u_m)_m$ is uniformly bounded using \eqref{eq:hyper_est}.
	This yields the uniform convergence $u_m\to u$ so that $u\in \bigcap_{s=0}^2 C^s([0,t_0];H^{4-s})$ is a strong solution to \eqref{eq:acc_mcf_u_exApproximate}.
	
	The proof for uniqueness is rather similar.
	If there exists another solution $\bar{u}$, then let $u^{m-1}=\bar{u}$, $u^{m}=u$ in \eqref{eq:recursive_form}. Using the estimate in \eqref{eq:regularity_estimate} we find then \[\norm{D(u(t)-\bar{u}(t))}_{L^2}=0.\]
	Taking into account the initial and boundary conditions, we conclude that \[\norm{u(t)-\bar{u}(t)}_{L^2}=0 \text{ for }  \; t\in [0,t_0].\]
	Therefore the solution is unique over $[0,t_0]$.
\end{proof}
We point out that the $H^4$ regularity of the initial value $u_0$ seems necessary for using our current strategy of  proof. Particularly, this regularity is required in order to have the estimate \eqref{eq:regularity_estimate}.

\begin{remark}\label{rem:global_solution}
	Based on the short time solution, we now comment on how to achieve  a global solution of \eqref{eq:acc_mcf_u_exApproximate} for arbitrary $T\in (0,\infty)$ of the  domain $ (0,T]$ by assuming a sufficiently regular initial value.
	The idea is to make sure that for arbitrary $T>0$, one has the estimate
	\begin{equation}\label{eq:hyper_properties}
	\sum_{l=1}^4\norm{D^l u(t)}_{L^2} \leq c_0 M \quad  \text{ for all } \quad t\in [0,T], 
	\end{equation}
	which is an assumption on the the initial data in Theorem \ref{thm:short_time_approx}. 
	If this is fulfilled, we see that $u(t_0)$ satisfies the requirement for the initial data of Theorem \ref{thm:short_time_approx}.
	Let $u(t_0)$ again be the initial data. Then one can derive the solution for the time domain $[t_0,2t_0]$ using the same technique as in Theorem \ref{thm:short_time_approx} and \eqref{eq:hyper_properties}.
	The procedure can be repeated for the whole time domain $[(n-1)t_0,nt_0]$ for every $n\in \N$ and $n\leq T/t_0$.
	This idea has been realized in \cite{Mat77} where more general equations have been considered.
	It is proven in \cite[Lemma 6]{Mat77} that for sufficiently regular initial data,  that is $\sum_{l=1}^4 \norm{D^l u_0(0)}_{L^2} \leq \varepsilon$ for some $\varepsilon>0$ depending on $M$ in \eqref{eq:constantM} sufficiently small,  if  \eqref{eq:acc_mcf_u_exApproximate} has the solution $u\in C([0,T],H^4)$ for arbitrary $T>0$, and if the estimate $\sum_{l=1}^4\norm{D^l u(t)}_{L^2} \leq M $ holds true over the whole temporal domain $[0,T]$, then \eqref{eq:hyper_properties} holds true.
	Note that $\varepsilon$ does depend on $M$, but not on $T$.
\end{remark}

We mention that for problems with higher spatial dimension, i.e. $x\in \R^d$ ($d\geq 3$) there are energy decay rates of general quasilinear strictly hyperbolic equations available in \cite{CheMil11}.

In order to study the solution of the original equation \eqref{eq:acc_mcf_u_ex}, there are certain restrictions using the current framework. First we can not pass $\epsilon \to 0$ as there is no uniform estimate on the approximating solutions.
Second, when $\epsilon=0$, it is a degenerated hyperbolic equation in the entire domain $\R^n\times (0,\infty)$.
Using the current procedure needs some energy estimates for the corresponding degenerated linear hyperbolic PDEs.
In the parabolic problem or first-order hyperbolic problem case, viscosity solutions \cite{CheGigGot91,EvaSpr91} has been introduced to overcome the difficulty. 
However, for degenerated second-order hyperbolic PDEs, because of the lack of maximum principle, such an application seems out of reach.
The literature appears very sparse on such and related issues up to the best of our knowledge. 
There is some work in this direction for degenerate linear hyperbolic equations, such as, e.g., \cite{Dan88,Asc06}, but definitely further efforts, or maybe even completely new concepts are required in order to successfully solve the nonlinear problem.
We notice that, without introducing the regularization parameter $\epsilon$, a local existence of solutions seems can be provable using the techniques from \cite{Joh76,Kla80}. However, it is still not clear how to proceed with the global existence and uniqueness.
We leave this for future work.

\section{Applications and numerical results}
\label{sec:algorithm}
In this section, we consider numerical aspects of the two proposed damped second-order nonlinear flows and their applications.
In particular, we focus on applications in image denoising and  correcting displacement errors in image data that motivate this study. The purpose of this section is to illustrate the behavior of the solutions of two novel hyperbolic PDEs, and show the applicability of the new PDEs to imaging problems. Nevertheless, we first provide a common algorithmic framework based on discretizations of the two PDEs. The convergence analysis of the algorithm is deferred to the Appendix. Note that, here we mostly aim at understanding the behavior and new aspects of the second-order PDEs, rather than developing new numerical algorithmic schemes. Therefore, we use simply the explicit Euler scheme and a finite difference method. 
One may find more sophisticated approaches for the discretizations of PDEs with respect to the nonsmoothness; see for instance \cite{ChaPoc10,HinRauHah14,BarDieNoc18}.

\subsection{An algorithm accommodating both types of PDEs}
Considering the evolutionary PDEs as regularization methods, the stopping time is important as it plays the role of the regularization parameter.
In principle, the stopping criterion for image problems typically depends on the noise level and the initial data, just as in standard regularization theory \cite{Engl:1996}, and the regularization parameters are chosen according to the magnitude of noise.
We provide here an automatic stopping rule based on thresholds on the high frequency in Fourier space.
We denote now $\mathbf{u}\in \mathbb{R}^{M}\times \mathbb{R}^{N}$ a discrete image matrix.
In order to set up our stopping criteria, we  adopt a frequency domain threshold method based on the fact that noise is usually represented by high frequencies in the frequency domain \cite[Chapter 4]{Gonzalez2002}.
An associated high frequency energy is defined by
\begin{equation*}
E_{N_0} (\mathbf{u}) = \sum_{(i,j)\in N_0} \left| \mathcal{F}(\mathbf{u}) (i,j) \right|,
\end{equation*}
where $ \mathcal{F}(\mathbf{u})$ denotes a 2D discrete Fourier transform of an image $\mathbf{u}$, and $N_0$ presents a selected set which contains high frequency indices. For instance, if one uses the Matlab function $\operatorname{fft2}$ as the discrete Fourier transform operator $\mathcal{F}$, then the high frequency coefficients will be the central part of the Fourier domain, and we take $N_0:=[\lfloor \rho M \rfloor,M-\lfloor \rho M \rfloor ] \times [\lfloor \rho N \rfloor,N-\lfloor \rho N \rfloor ]$, where $\rho \in (0,0.5)$ and $\lfloor \cdot \rfloor $ denotes the floor function. We define a function called Relative Denoising Efficiency (RDE)  as follows:
\begin{equation*}
\text{RDE}(k)=E_{N_0} (\mathbf{u}^k) )/  \max_{i,j} \left| \mathcal{F}(\mathbf{u}^k) (i,j) \right|.
\end{equation*}
Then, the value of  RDE  at every iteration can be used in a stopping criterion. Based on the above preparation, we propose the following algorithm. 
Note here, we use the operator $\mathbf{F}^{k} \mathbf{u}^{k}$ to uniformly represent the nonlinear parts resulted from both TVFs and MCFs, namely $\text{div}\left(\frac{\nabla  \mathbf{u}^{k}}{\abs{ \nabla  \mathbf{u}^{k}}}\right)$
and $\abs{ \nabla  \mathbf{u}^{k}}\text{div}\left(\frac{\nabla  \mathbf{u}^{k}}{\abs{ \nabla  \mathbf{u}^{k}}}\right)$. In the Appendix, we give a theoretical criteria on adaptive time-step size in \eqref{eq:parametersDtSteady}, which for simplicity of our image examples, can also be chosen as small constants.The estimate there gives a theoretical bound on the size of $\Delta t_k$ for the convergence analysis.
\renewcommand{\thealgorithm}{\arabic{algorithm}}
\setcounter{algorithm}{0}
\begin{algorithm}
	\begin{itemize}
		\item[] Input: Image data $\mathbf{u}^\delta_0$. Parameters $\eta>0$ and $\epsilon>0$. Tolerance $tol>0$.
		\item[] Initialization: $ \mathbf{u}_0\gets u_0$, $ \mathbf{v}_{0} \gets v_0$, $\Delta t_0>0 $, $\text{RDE}(0) \gets E_{N_0}(\f u_0)$, $k \gets 0$.
		\item[] While: $\text{RDE}(k)> tol $
		\begin{enumerate}
			\item[i.] $\mathbf{v}^{k+1} \gets (1-\eta \Delta t_k) \mathbf{v}^{k} + \Delta t_k  \mathbf{F}^{k} \mathbf{u}^{k}$;
			\item[ii.] $ \mathbf{u}^{k+1} \gets \mathbf{u}^{k} + \Delta t_k  \mathbf{v}^{k+1}$;
			\item[iii.] Update $\Delta t_k$ according to \eqref{eq:parametersDtSteady} ;
			\item[iv.] $k \gets k+1$;
			\item[v.] $\text{RDE}(k) \gets E_{N_0} (\mathbf{u}^k) ) /  \max_{i,j} \left| \mathcal{F}(\mathbf{u}^k) (i,j) \right|$.
		\end{enumerate}
		\item[] Output: A corrected image $\hat{\mathbf{u}} \gets \mathbf{u}^{k}$.
	\end{itemize}	
\caption{Explicit Euler scheme for discretizing damped second-order dynamics}
	\label{alg:symplectic}
\end{algorithm}
\begin{remark}
In practice, uniform time steps can be used and Step $iii$ in Algorithm \ref{alg:symplectic} can be ignored. However the criteria in \eqref{eq:parametersDtSteady} (see Appendix) of chosen step size is a kind of guideline for stable convergence of the algorithm.
\end{remark}
If we look deeper into the iterations of the algorithm, then we find
\[ \mathbf{u}^{k+1} =  \mathbf{u}^{k} + \Delta t_k \left( (1-\eta \Delta t_k) \mathbf{v}^{k} + \Delta t_k  \mathbf{F}^{k} \mathbf{u}^{k}\right).\]
As $ \mathbf{v}^{k}= \frac{\mathbf{u}^{k} - \mathbf{u}^{k-1} }{ \Delta t_{k-1}}$, this turns out to be
\[\mathbf{u}^{k+1} = \mathbf{u}^{k} + \frac{\Delta t_k}{\Delta t_{k-1}}  (1-\eta \Delta t_k)( \mathbf{u}^{k} - \mathbf{u}^{k-1})+ (\Delta t_k)^2  \mathbf{F}^{k}  \mathbf{u}^{k}.\]
Note that if we use a uniform time step, that is $\Delta t_k \equiv \Delta t_0$ , and choose $\eta=\frac{1}{\Delta t_0}$, then the algorithm is equivalent to the steepest descent method with step size $(\Delta t_0)^2$ whenever $\mathbf{F}^{k} \mathbf{u}^{k}$ can be interpreted as the negative gradient direction for an associated energy.
On the other hand, we can see that the term $\frac{\Delta t_k}{\Delta t_{k-1}}  (1-\eta \Delta t_k)(\mathbf{u}^{k} - \mathbf{u}^{k-1})$ plays a similar role as the correction step in Nesterov's scheme \cite{Nes04}, by which it is supposed to accelerate the steepest descent method.
In the following examples, in order to draw comparisons, we will always implement the first-order method using Algorithm \ref{alg:symplectic} by setting $\eta\equiv \frac{1}{\Delta t}$.
In this sense, we will find that Algorithm \ref{alg:symplectic} for  second-order flows includes acceleration and can be favoured over the first-order flows in applications.
One may note that the time step of the first-order method and the second-order method are not equal, as the former is $(\Delta t)^2$, but the latter is $\Delta t$.
However, we should be aware that for the discretization of evolutionary PDEs, the Courant–Friedrichs–Levy (CFL) condition needs to be taken into account for explicit time discretizations.
This puts restrictions on the length of the discrete time steps of the numerical implementations, e.g. for $2$D linear equations, second-order flows can have $\Delta t\sim h_x$ while first-order flows usually have $\Delta t\sim h_x^2$.
Here $h_x$ is the discretization mesh size of spatial variables.
In this sense, we argue that discrete time steps of order $(\Delta t)^2$ for first-order flows and $\Delta t$ for second-order ones are justified.
We shall notice that the costs of iterations for first-order flows and second-order flows are rather identical as Step (ii) in Algorithm \ref{alg:symplectic} consists very cheap operations. 

\subsection{Simulation on solutions and numerical results}
\label{subsec:numerical}
\subsubsection*{Evolution of characteristic functions}
In order to study some fundamental effects of our PDEs on the contour and scale of images, in the first example we test with an image resulting from a characteristic function.
We start with an image that is a scaled indicator function of a square $Q\subset \Omega$ (left image in Figure \ref{fig:square})
\[u_0=255\cdot I_{Q}=
\left\{\begin{array}{ll}
255, & x\in Q; \\
0, & x \in \Omega/Q.
\end{array}\right.\]
It is well-known that the first-order TVF will decrease the intensity value of the region $Q$, tends to preserve its shape, while the first-order level-set MCF will slowly shrink the square $Q$ to a circular shape, thus reducing its perimeter, but it will preserve the intensity value.
Figure \ref{fig:square} shows  numerical results on the evolution of the second-order flows \eqref{eq:acc_tvf} and \eqref{eq:acc_mcf_u}.
We use a $205\times 205$ square grid, and fix the domain $\Omega=(0,1)\times (0,1)$; therefore the spatial step size is $\Delta x = \Delta y= 1/204$. We use uniform step size for the time discretization, and choose  $\Delta t_1=0.001$, and $\eta=\frac1{50 \Delta t_1}$ for the TVF methods, and choose $\Delta t_2=0.0001$, and $\eta=\frac1{20 \Delta t_2}$ for the level-set MCF methods. For both we take $v_0=0$, and run $50000$ iterations.
From Figure \ref{fig:square}, we find that the resulting algorithm from \eqref{eq:acc_tvf} and \eqref{eq:acc_mcf_u} present exactly the same behavior as their first-order counterparts. In the three images in Figure \ref{fig:square}, we take the same pixel at the position $(106,100)$. The intensity of the original square is $255$, and it is the initial value, which decreases to $242.2$ in the image evolved with respect to the second-order TVF \eqref{eq:acc_tvf}, but remains the same in the image evolved according to the  second-order level-set MCF \eqref{eq:acc_mcf_u}. On the other hand, the shape of the square is almost not changed by the second-order TVF except the sharp corners, while it has been shrunk to a circle by the second-order level-set MCF.
\begin{figure}[h!]
	\centering
	\includegraphics[width=0.3\textwidth]{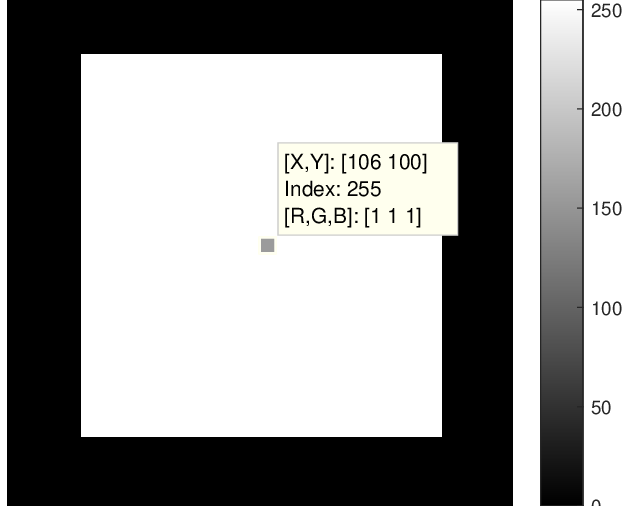} \;
	\includegraphics[width=0.3\textwidth]{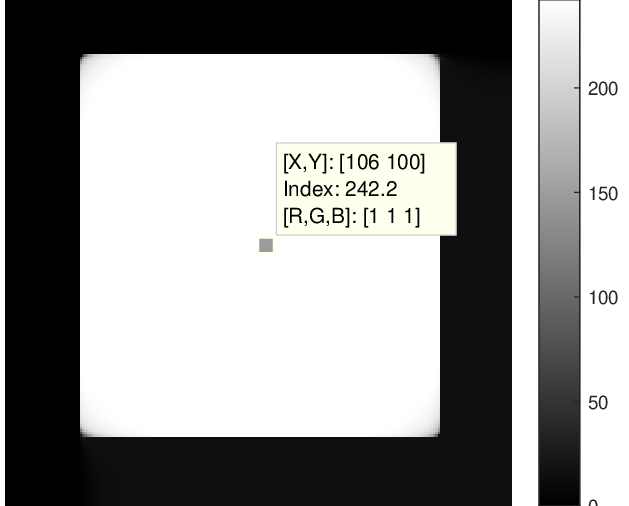} \;
	\includegraphics[width=0.3\textwidth]{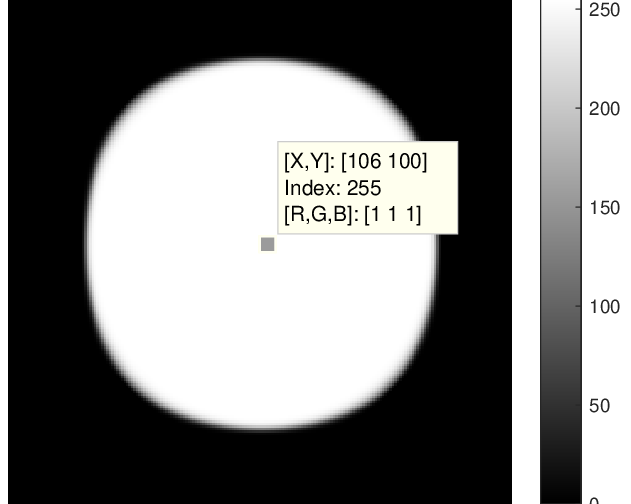}
	\caption{An example that distinguishes the damped second-order MCF and the damped second-order TVF. From left to right: the square image; the result of damped second-order TVF (middle) and  the result of damped second-order MCF (right). }
	\label{fig:square}
\end{figure}
Note here and also in the other examples that we take $\epsilon=10^{-16}$ which is already much smaller then the temporal and spatial mesh sizes, respectively. However, it seems sufficient for the numerical examples we considered.
Numerical diffusion is observed in the second and third images in Figure \ref{fig:square}, and the effect grows as time step and iteration numbers get larger. However, in our following image applications, only a small number of iterations is needed, and thus we will not investigate this issue here further since it is out of the scope of this paper. 
%Rather we refer to  existing methods in the setting of total variation minimization and beyond, such as, e.g., \cite{BarDieNoc18,HinRauHah14}.

We have initialized the discussion in Section \ref{subsec:Ana_solution_stvf} that the initial velocity can influence the evolution of the trajectory. 
Here we show some numerical observation when $u_0$ and $v_0$ connect to different families of eigenfunctions of $\partial \text{TV}$. In Figure \ref{fig:diff_eigenfunctions}, the left and middle images at the above row are corresponding to $u_0$ and $u_1$ functions, respectively. Both $u_0$ and $u_1$ are taken to be $256\times 256$ images with intensity value $255$ for the white and $0$ for the black. We take $v_0=-\eta u_1$, use the same discretization as in the previous example, and choose the parameters $\Delta t=0.001$ and $\eta=10$. The second-order TV flow under such initial conditions shows interesting behaviors.  Note that the second-order MC flow has a different perspective and is not tested here. 
 \begin{figure}[h!]
	\includegraphics[width=0.32\textwidth]{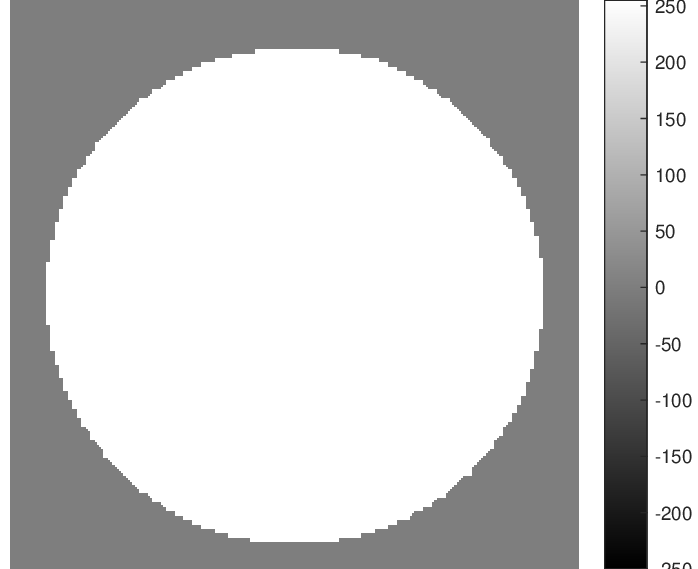} 
	\includegraphics[width=0.32\textwidth]{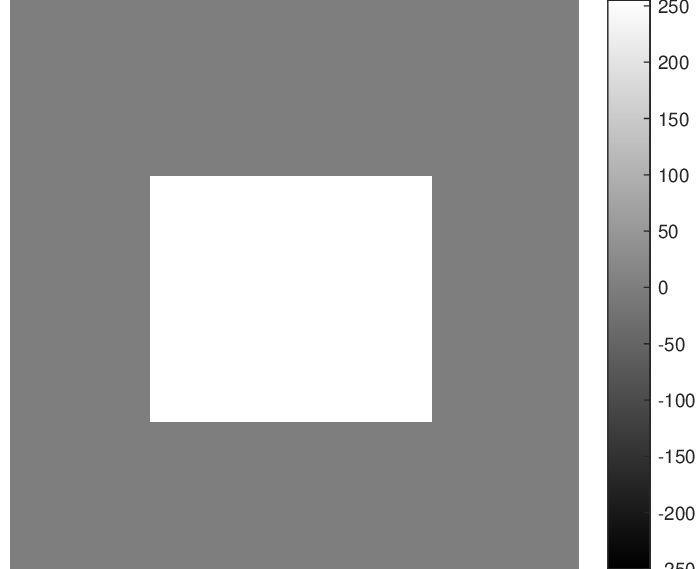}
	\includegraphics[width=0.32\textwidth]{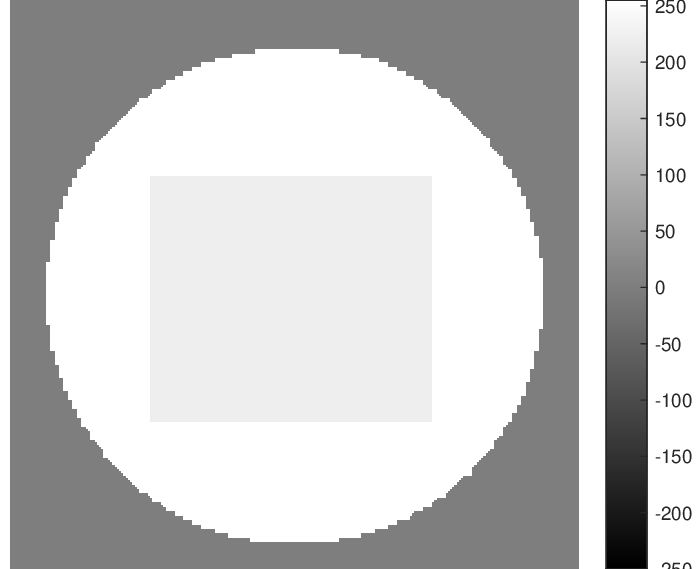}\\ 	\vskip 0.1pt
	\includegraphics[width=0.32\textwidth]{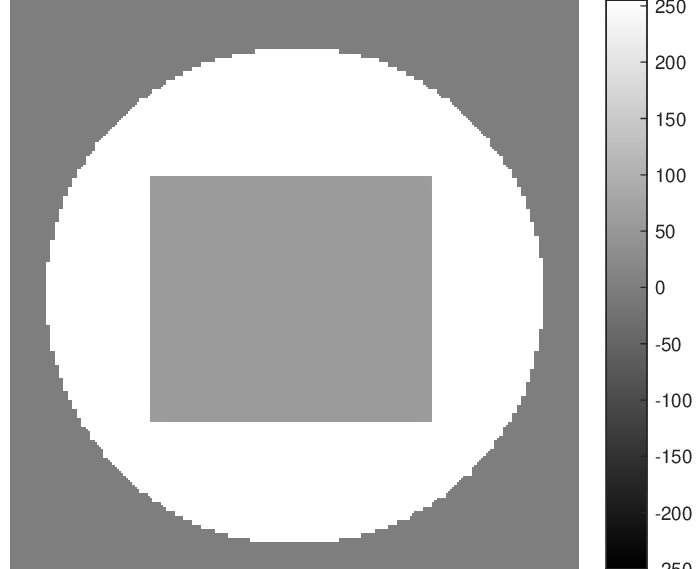}
	\includegraphics[width=0.32\textwidth]{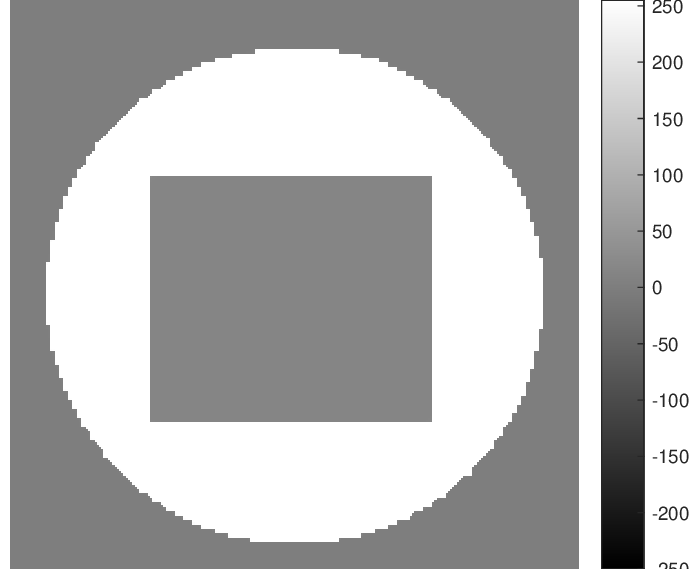}
	\includegraphics[width=0.32\textwidth]{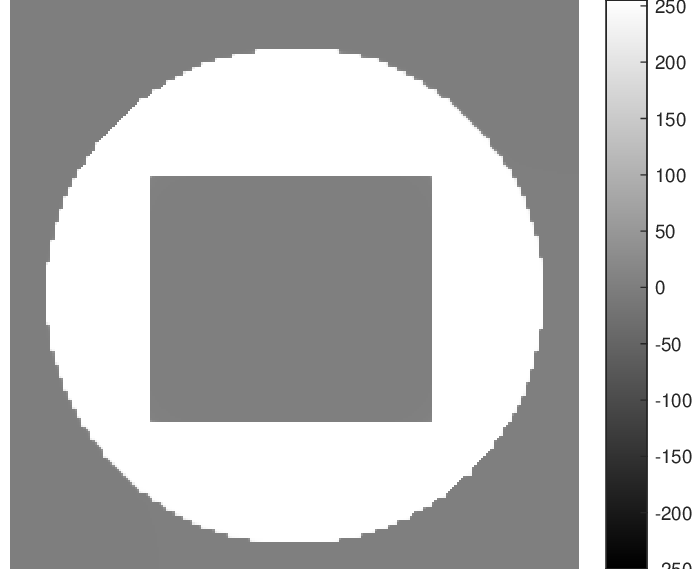}
	\caption{Initial value $u_0$ and initial velocity $v_0$ with respect to different eigenfunctions. From left to right, and from above to bottom: $u_0$, $u_1$ and the result from the second-order TV flow after 50, 500, 1000, 5000 iterations, respectively. Note that we choose the initial velocity $v_0=-\eta u_1$ ($\eta=10$).}
	\label{fig:diff_eigenfunctions}
\end{figure}	
 \begin{figure}[h!]
	\centering
	\includegraphics[width=0.32\textwidth]{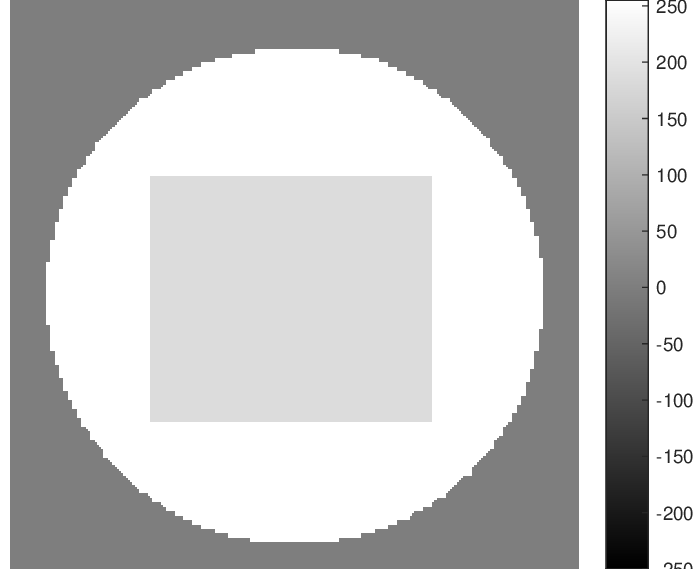}
	\includegraphics[width=0.32\textwidth]{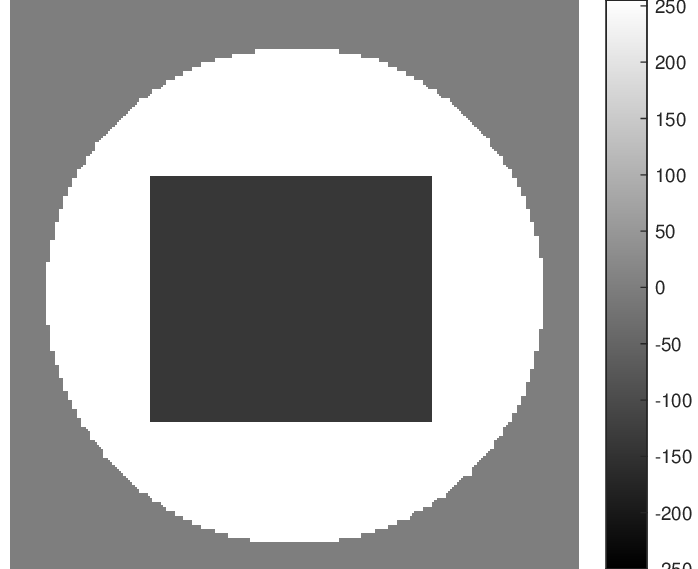}
	\includegraphics[width=0.32\textwidth]{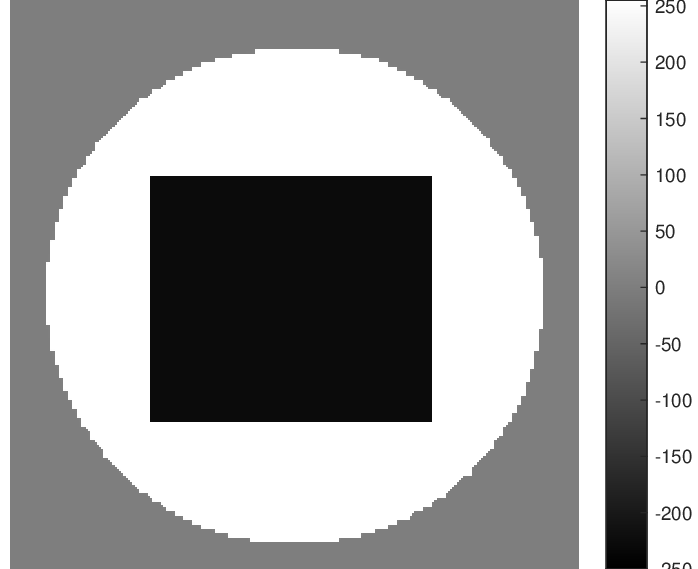}
	\caption{Initial value $u_0$ and initial velocity $v_0$ with respect to different eigenfunctions. From left to right: Result from the second-order TV flow after 50, 500, 1000 iterations, respectively. Initial velocity $v_0=-2\eta u_1$ where $u_1$ is the same as Figure \ref{fig:diff_eigenfunctions1} for $\eta=10$.}
	\label{fig:diff_eigenfunctions1}
\end{figure}	
In Figure \ref{fig:diff_eigenfunctions}, the square intensity gradually decreases, and then it stays around $0$, while the intensity value of the circle with $0$ initial velocity region is only very mildly decreased. 
In Figure \ref{fig:diff_eigenfunctions1}, we re-scale the initial velocity as $v_0=-2\eta u_1$, but use the same $u_0$ as the one in Figure \ref{fig:diff_eigenfunctions}.  Now we see a different phenomenon in the evolution of the solutions when compared to the one in Figure \ref{fig:diff_eigenfunctions}. Here, the intensity of the square decreases continuously  after it reaches $0$ and eventually become negative.

\subsubsection*{Image de-noising}
We notice that the translation of Nestorov's acceleration algorithm to the continuous second-order dynamics gives naturally rise to a homogeneous initial velocity. 
In this section, we first depict the examples under such an initial condition.
However, inhomogeneous initial velocity, as it is shown in the last section, can also be used and it provides some new aspects in imaging. We demonstrate its application potential by a simple toy case in the second part of this section. 
\paragraph{Using homogeneous initial velocity}
Our second set of examples concern image de-noising using both the TVFs and the level-set MCFs.
For the reason of comparing the evolutionary effect, we will always consider uniform time steps $\Delta t_k=\Delta t$ for each four of the discretized flows.
The image we tested here are of pixel size $400\times 400$, and we fix it to be in the domain $\Omega=(0,1)\times (0,1)$; therefore the spatial step size is $\Delta x = \Delta y= 1/399$. We choose  $\Delta t_1=0.003$, and $\eta=\frac1{50 \Delta t_1}$ for the TVF methods, and choose $\Delta t_2=0.0001$, and $\eta=\frac1{10 \Delta t_2}$ for the level-set MCF methods.
Using $\eta\equiv \frac{1}{\Delta t}$ in Algorithm \ref{alg:symplectic} for the first-order flows, the first-order TVF and the first-order MCF have step sizes $(\Delta t_i)^2$ for $i=1,2$, respectively. For setting the final iteration of the algorithm we choose $\rho=0.2$, and $tol=1.0$ in this example.
The noise is generated using a random variable $\delta$ follow Gaussian distribution of mean $0$ and standard deviation $20$. Adding it to the pepper image results in an image with approximately $15$ percentage of noise.
\begin{figure}[h!]
	\begin{center}
		\includegraphics[width=0.3\textwidth]{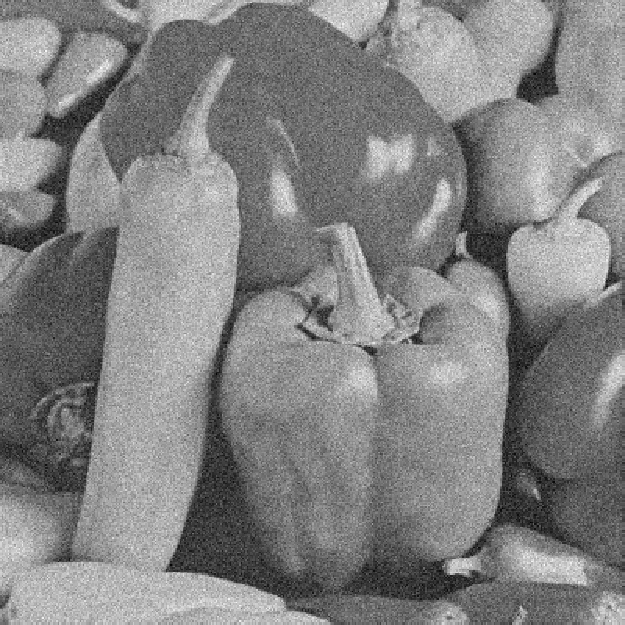}\quad
		\includegraphics[width=0.3\textwidth]{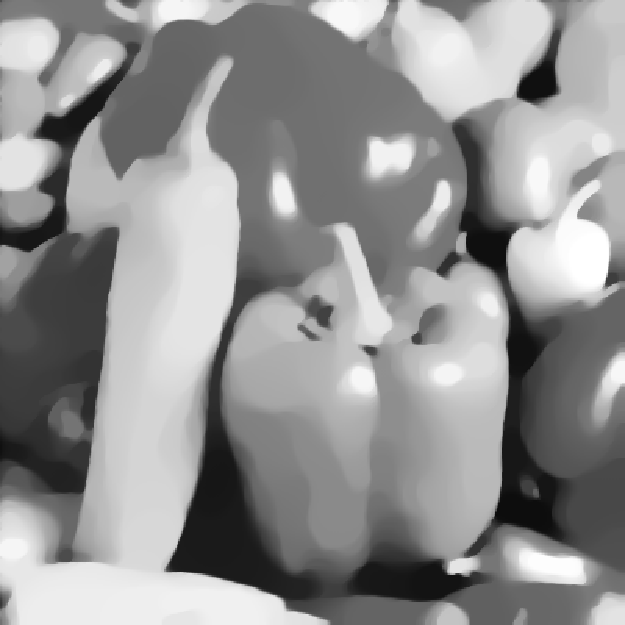}\quad
		\includegraphics[width=0.3\textwidth]{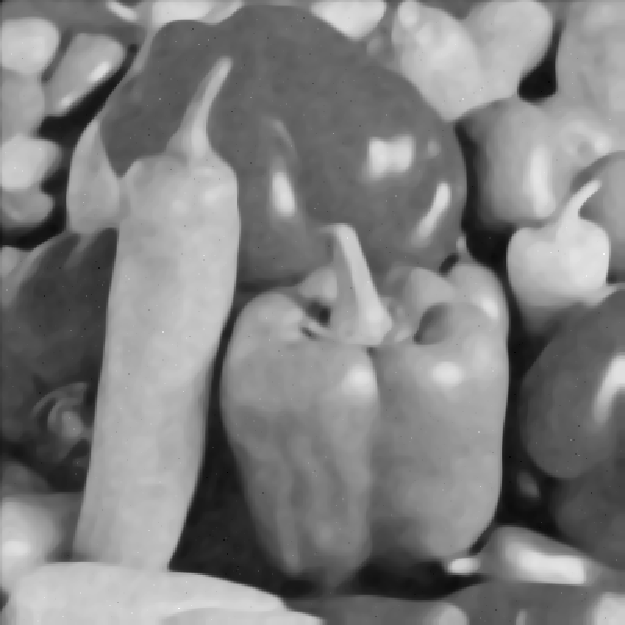} \\ \vskip 0.1pt
		\includegraphics[width=0.3\textwidth]{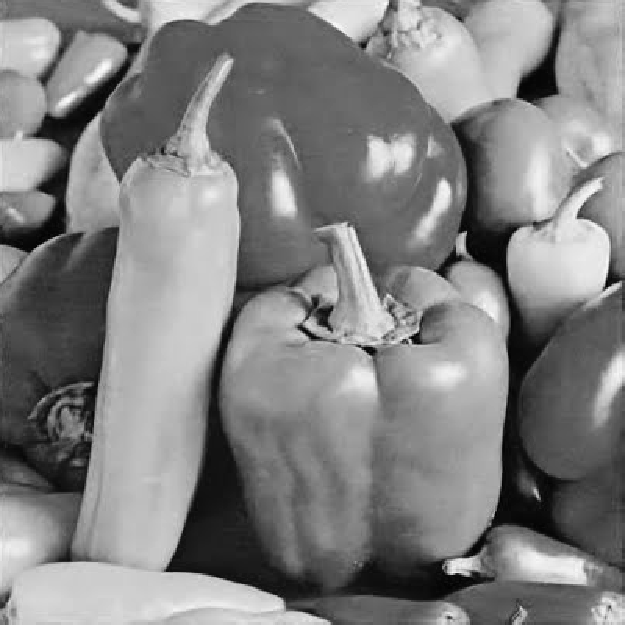}\quad
		\includegraphics[width=0.3\textwidth]{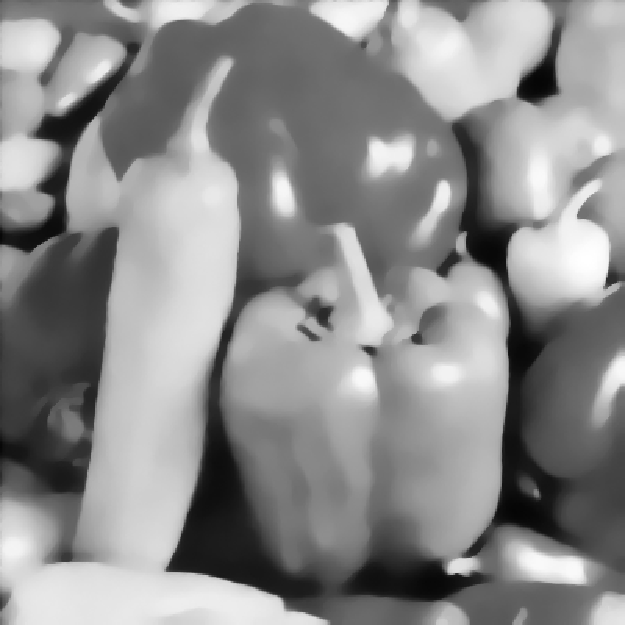}\quad
		\includegraphics[width=0.3\textwidth]{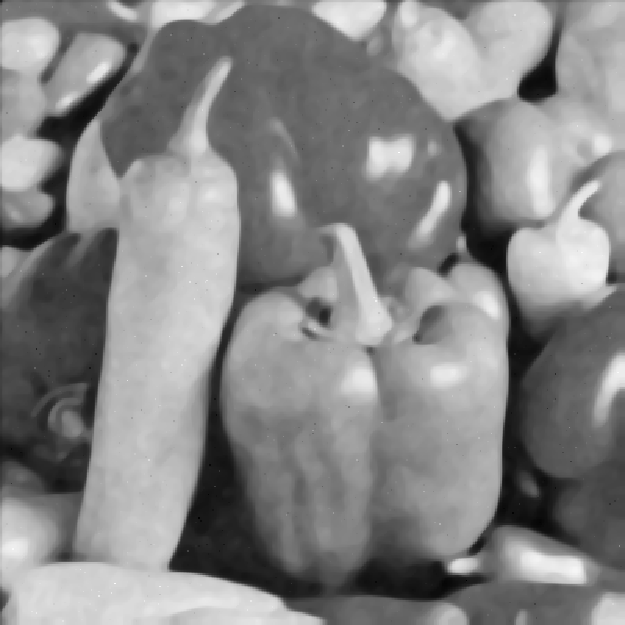}
	\end{center}
	\caption{Comparison of denoising for pepper image using the algorithms by TVFs and also level-set MCFs. From left to right: The first column: the noisy image; the noise-free image; the second column: the result of first-order TVF, the result of  second-order TVF; the third column: the result of first-order level-set MCF, the result of  second-order level-set MCF. }
	\label{fig:denoising}
\end{figure}

The results in Figure \ref{fig:denoising}  indicate that all four methods yield competitive results for denoising, while there is a big difference between the first-order flows and the damped second-order flows in terms of CPU time.
Note that this has been reflected in the different iteration times to reach the stopping rule. For the first-order TVF and the second-order one, it is $19189  \text{ vs. }  338$ iterations, while for the first-order level-set MCF and the second-order one, it is $4299  \text{ vs. }  331$ iterations.
Each scheme is run on a computer with Intel 3687U CPU, 2.10GHz$\times$4,  15.5GB RAM, and using Matlab 2017b.
One may notice that the algorithm by level-set MCFs is capable of denoising almost as good as the TVFs.
\paragraph{Using inhomogeneous initial velocities }
Our numerical example here shows that if we choose the initial velocity properly, the second-order flow can behave significantly different to the one with homogeneous initial velocity. This property distinguishes second-order dynamics from their first-order counterparts.
Based on the observation from Section \ref{subsec:Ana_solution_stvf}, when the initial value $u_0$ is considered positive with respect to some eigenfunctions, and if we properly scale the initial velocity, then the decay of the total variation can be accelerated in comparison to the homogeneous initial velocity case. This inspires the next experiments.
Let $g$ be the noise in the initial data $u_0$, i.e., $u_0=u+g$.
The idea is then if we can have the initial velocity to be negatively proportional to $g$, then the algorithm of second-order flow has the potential to eliminate the noise more efficiently than the flow with homogeneous initial velocity. 
To verify this, we design the following model example.
Our tests are again based on the pepper image with additive noise. 
We take a rather strongly degraded image $u_0=u+g$ with $g=5\delta$. Here $\delta$ is an instance of a random variable following a Gaussian distribution with mean $0$ and standard deviation $20$. The discretization settings are exactly the same for the homogeneous and inhomogeneous initial conditions, where $\Delta t=0.003$, and the spatial discretization is the same as in the previous example.
The inhomogeneous velocity is obtained by applying a simple highpass filter. More specifically, we apply a fast Fourier transform (fft2 in MATLAB) to $u_0$, and only keep the $19\%$ of  the Fourier coefficients from the high frequency region in Fourier space. Then applying the inverse Fourier transform (ifft2) we get the reconstructed image from these Fourier coefficients and denote it by $h_g$ which approximates $g$. We choose $v_0=-\eta h_g$, and parameter values $\eta=10$, $\rho=0.125$, $tol=1$. The results for homogeneous and inhomogeneous initial velocities are presented in Figure \ref{fig:denoising_nonhomo1}. The former consumes $1936$ iterations to stop while the latter terminates already after $922$ iterations. We notice that the right image also has slightly better quality in terms of the standard image measures. More precisely, we use the following two indexes: the mean square error (MSE), and the structural similarity (SSIM).
MSE measures the absolute differences of pixels between the corrected image and the ideal image, for which a smaller value indicates a better match; SSIM is more focus on the structure differences, and a larger value refers to a better structure preservation. The right image in Figure \ref{fig:denoising_nonhomo1} has MSE=246.91, and SSIM=0.35, while  it is: MSE=266.13, and SSIM=0.31 for the middle image.
	\begin{figure}[h!]
	\includegraphics[width=0.3\textwidth]{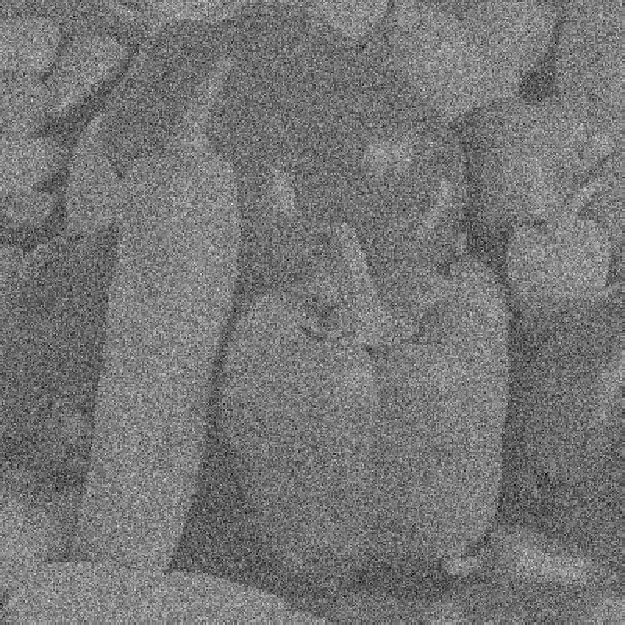}\quad
	\includegraphics[width=0.3\textwidth]{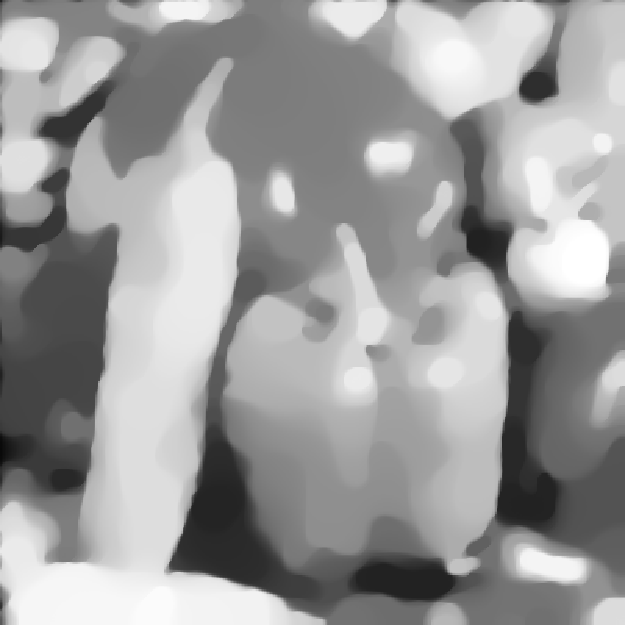}\quad
	\includegraphics[width=0.3\textwidth]{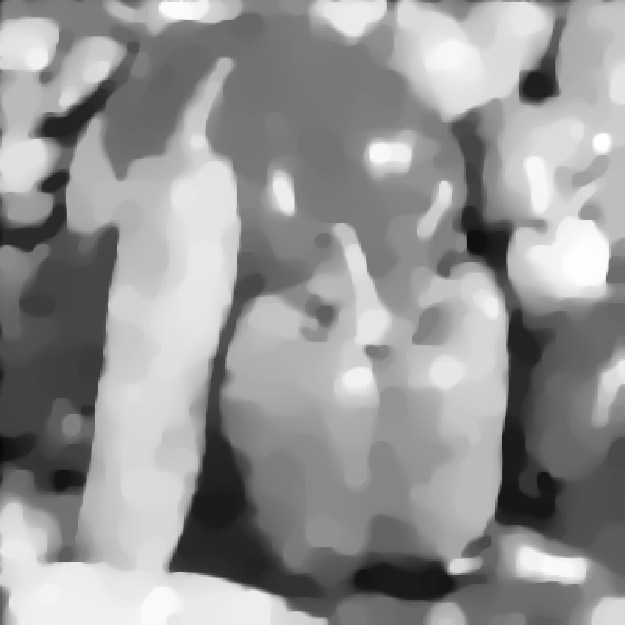}
	\caption{From left to right: Noisy pepper $u_0=u+5\delta$; The result using homogeneous initial velocity; The results using inhomogeneous initial velocity $v_0=-\eta h_g$ for $\eta=10$.}
	\label{fig:denoising_nonhomo1}
\end{figure}
 In order to highlight the effect of the choice of $\psi_0=-\eta g$, we show an ideal example by amplifying the noise $100$ times (i.e., $g=100\delta$)  with the same $\delta$ as before. We choose parameter values $\Delta t=0.001$, $\eta=10$, $\rho=0.25$, $tol=1$, and compare the outcome from Algorithm \ref{alg:symplectic} by the first-order TV flow, the second-order TV flow with homogeneous initial velocity $\dot{u}(0)=0$, and inhomogeneous initial velocities $\dot{u}(0)=-0.5\eta g$, $\dot{u}(0)=-2\eta g$ and $\dot{u}(0)=-\eta g$, respectively. These results are presented in Figure \ref{fig:denoising_nonhomo2}. 
	\begin{figure}[h!]
		\includegraphics[width=0.3\textwidth]{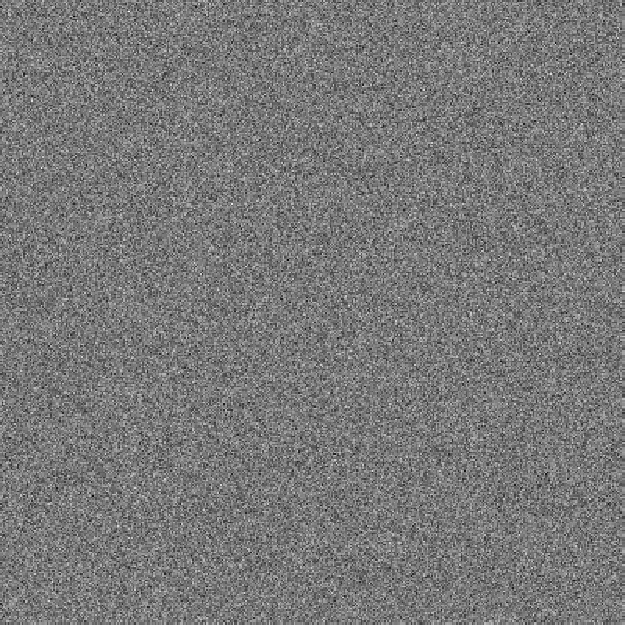}\quad
		\includegraphics[width=0.3\textwidth]{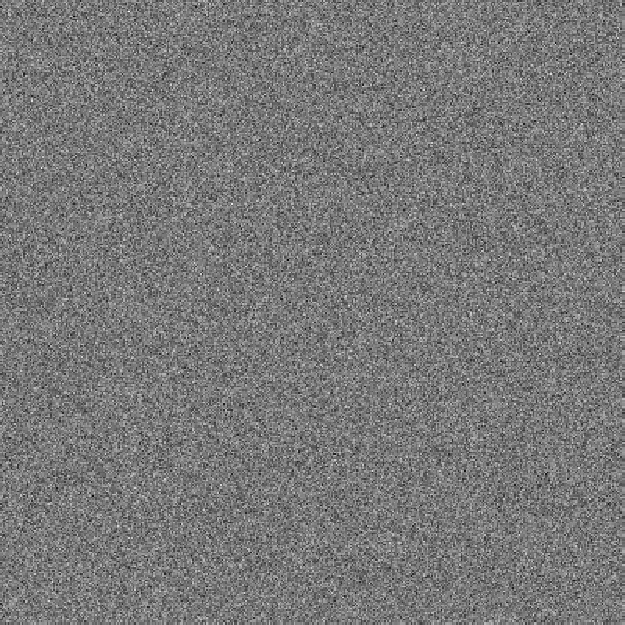}\quad
		\includegraphics[width=0.3\textwidth]{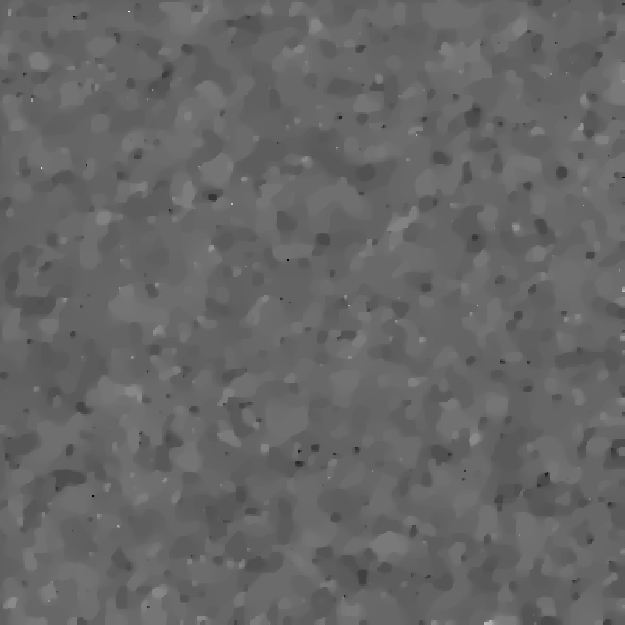}\\	\vskip 0.1pt
		\includegraphics[width=0.3\textwidth]{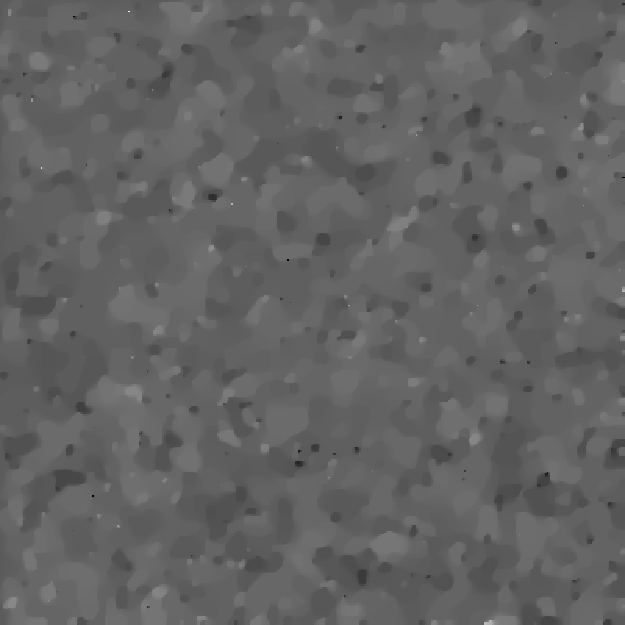}\quad	\includegraphics[width=0.3\textwidth]{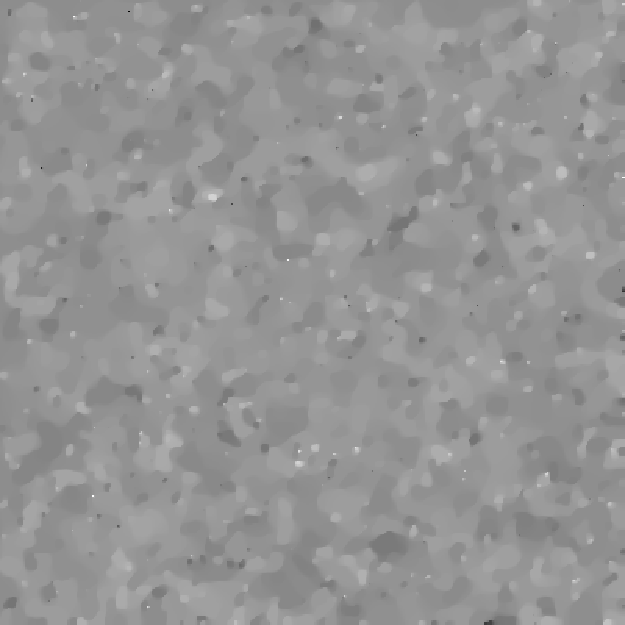}\quad
		\includegraphics[width=0.3\textwidth]{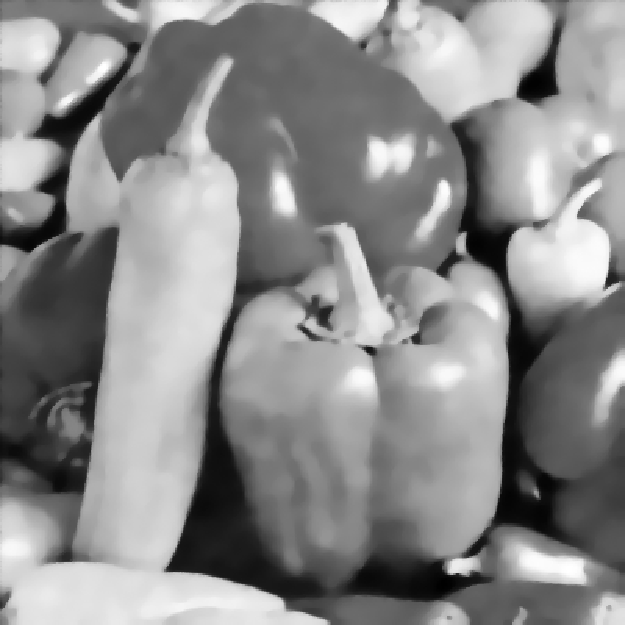}
		\caption{From left to right and from up to down: A completely noisy pepper image; Results corresponding to the cases of first-order TV flow, second-order TV flow with homogeneous initial velocity, inhomogeneous initial velocity $\dot{u}(0)=-0.5\eta g$ $\dot{u}(0)=-2\eta g$, and $\dot{u}(0)=-\eta g$, respectively, where  $\eta=10$.}
		\label{fig:denoising_nonhomo2}
	\end{figure}
Using the same stopping rule for all the cases, the algorithms with the first-order TV flow, the second-order TV flow with homogeneous initial velocity and  inhomogeneous initial velocities $\dot{u}(0)=-0.5\eta g$ and $\dot{u}(0)=-2\eta g$ have eventually all ran out of $50000$ iterations but still returned nothing meaningful. While using the initial velocity $\dot{u}(0)=-\eta g$, the algorithm of second-order TV flow terminates after $1221$ iterations with an almost perfect image.
The experiment here indicates that the setting $\dot{u}(0)=-\eta g$ seems to be critical among other inhomogeneous initial velocities. 
This would need more investigation both theoretically and numerically, which we do not pursue further in this paper.
For the second-order MCF, it is not clear how to identify a meaningful inhomogeneous initial velocity. Therefore, in the following examples, we restrict to homogeneous ones for both cases.
Nevertheless, the above example shows some interesting potential of second-order flows which seems to be capable of decomposing an image by providing some prior information in the initial velocity.

\subsubsection*{Displacement error correction}
Now, we show the results of correcting displacement errors in Figure \ref{fig:dejittering}.
We again consider the pepper image of pixel size $400\times 400$ to be in the domain $\Omega=(0,1)\times (0,1)$. We degrade the image by some displacement error yielding a line-jittered pepper image. 
This is done by shifting every horizontal row of the image randomly, that is
\[u^d(x_1,x_2)=u(x_1+d(x_2),x_2), \text{ where } d(\cdot) \text{ is a  random process, and } \norm{d}_\infty \leq \sigma.\]
In the experiments, we set $\sigma$ to be $8$ pixels, and choose the time-step-size to be $\Delta t_1=0.003$, and $\eta=\frac1{50 \Delta t_1}$ for the damped second-order TVF, and the step size $\Delta t_2=0.0001$, and $\eta=\frac1{30 \Delta t_2}$ for the damped second-order level-set MCF. For the stopping rule of the algorithm, we set $\rho=0.2,\; tol=0.3$.
Notice that the iteration numbers are again different between the algorithms due to the first-order flows and the second-order flows, in order to reach the stopping criteria.
The number of iterations of the first-order TVF vs. the second-order TVF is $47769 \text{ vs. } 952$, and the number of iterations of the first-order level-set MCF vs. the second-order level-set MCF is $5614 \text{ vs. } 243$.
We see that the results of the TVFs and the level-set MCFs turn out to be quite different in this example. This is not surprising,  as we have observed from Figure \ref{fig:square} that the second-order TVF has very limited effect in changing the curvature of the level lines in comparison with the second-order level-set MCF.
Consequently, the algorithm with TVFs pays a bigger price to correct the jitter error as it sacrifices the contrast of the images which we can see clearly from Figure \ref{fig:dejittering}.
\begin{figure}[h!]
		\includegraphics[width=0.3\textwidth]{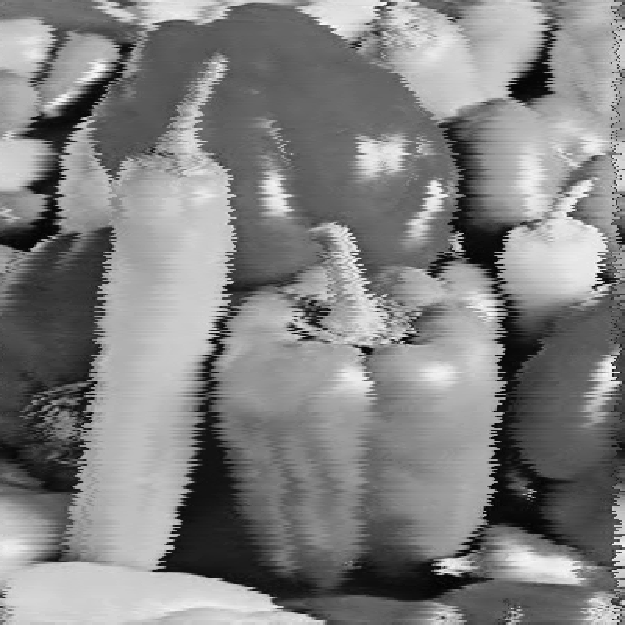} \quad
		\includegraphics[width=0.3\textwidth]{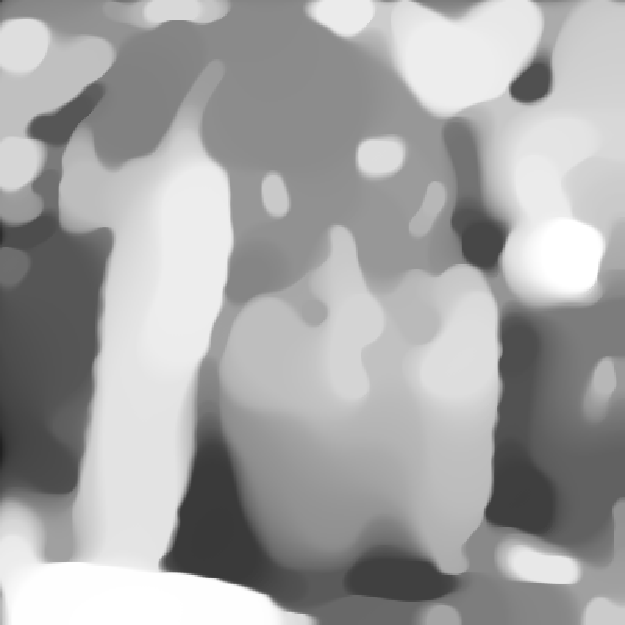} \quad
		\includegraphics[width=0.3\textwidth]{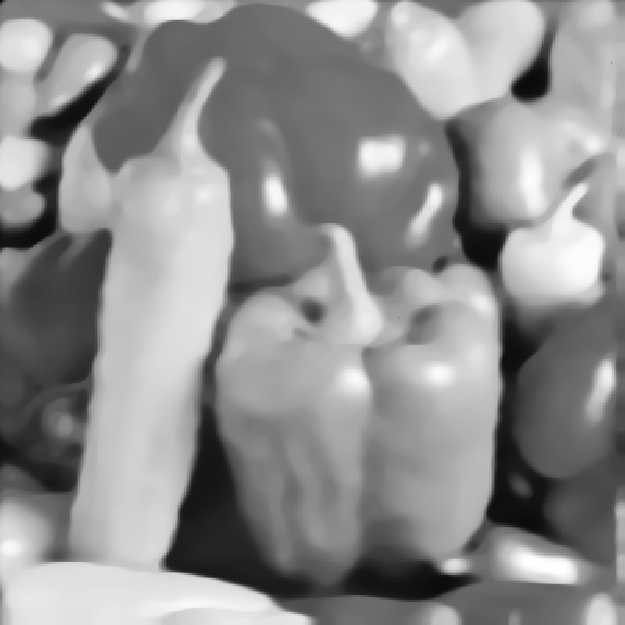}\\ \vskip 0.1pt
		\includegraphics[width=0.3\textwidth]{fig/pepper.eps} \quad
		\includegraphics[width=0.3\textwidth]{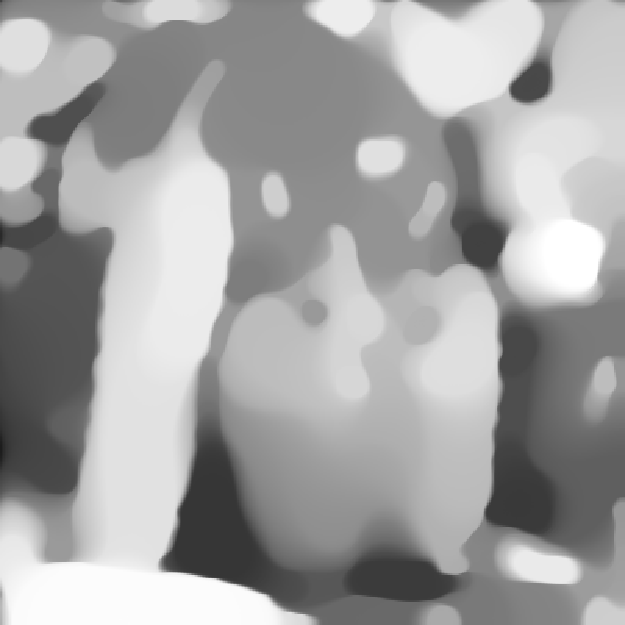} \quad
		\includegraphics[width=0.3\textwidth]{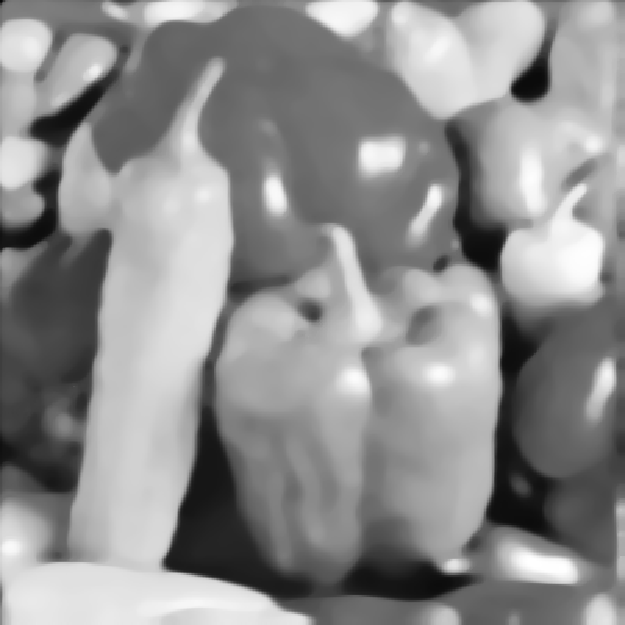}
		\caption{Dejittering of pepper image. Above: from left to right: the jittered image;
			the result of first-order TVF and the result of first-order level-set MCF.	
			Below: from left to right: the ground truth; the result of second-order TVF and the result of second-order level-set MCF. }
		\label{fig:dejittering}
\end{figure}

\subsubsection*{Simultaneous denosing and correcting displacement errors}
In Figure \ref{fig:denoising+dejittering}, we show the results on dejittering and denoising simultaneously using the algorithm by the damped second-order level-set MCF and comparing it with the results from the damped second-order TVF.
We use the jittered pepper image from the last example and then add the same amount of Gaussian noise as we did in the denoising example (appr. $15$ percentage of noise), and also we select the same disretization parameters as before.
We do not show the results given by the first-order flows as they are similar to the second-order ones but requiring larger iteration times.
From  Figure \ref{fig:denoising+dejittering},  we see that, up to the stopping threshold with ($\rho=0.2, \;tol=0.5$), the second-order level-set MCF performs better in correcting the displacement error than the second-order TVF, while the latter does better for denoising than the former. Overall, the algorithm by second-order level-set MCF outperforms in this example as both the noise and the jitter are significantly reduced simultaneously.
The observed iteration times are $581$ for the second-order TVF and $241$ for  second-order level-set MCF, respectively.
\begin{figure}[h!]
	\begin{center}
		\includegraphics[width=0.3\textwidth]{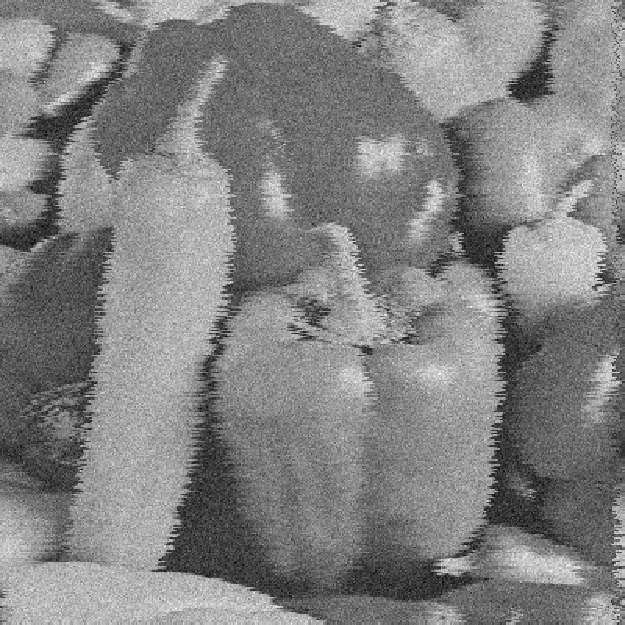} \quad
		\includegraphics[width=0.3\textwidth]{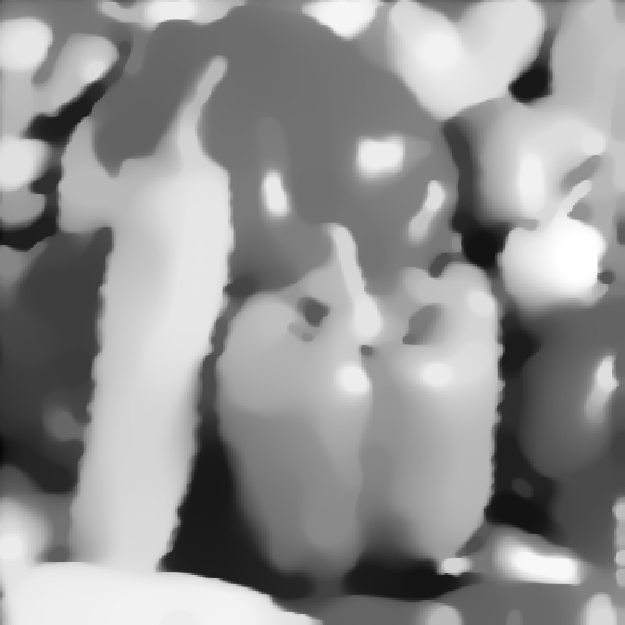} \quad
		\includegraphics[width=0.3\textwidth]{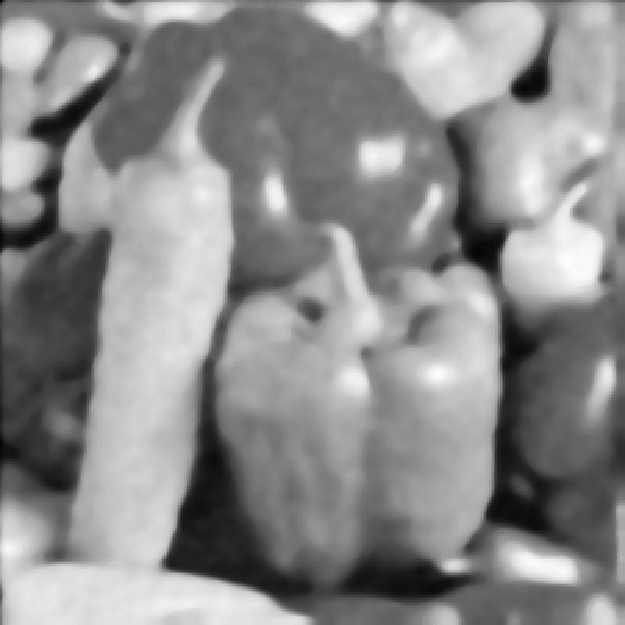}
		\caption{Dejittering and denoising of pepper image simultaneously. From left to right: the jittered and noisy image; the result of  second-order TVF and the result of  second-order level-set MCF. }
		\label{fig:denoising+dejittering}
	\end{center}
\end{figure}
\begin{remark}
	Notice that a smaller value of $\eta$ in Algorithm \ref{alg:symplectic} usually results in better efficiency (less number of iterations to achieve the same outcome with respect to the same discrete time step size) in both denoising and dejittering tasks for both the second-order flows. However, there is a trade off as too small $\eta$ causes unstable evolution. This is particularly relevant for the second-order level-set MCF algorithm as explained in Section \ref{SecHeuristics}. There we have argued that $\eta$ needs to be sufficiently large to provide an energy decay with respect to the level sets evolution.
\end{remark}

	\paragraph{Summary of quantitative comparison using standard image measures for homogeneous initial velocity}
	\begin{table}[h!]
		\begin{center}
			\renewcommand{\arraystretch}{1.2}
			\resizebox{\textwidth}{!}{
				\begin{tabular}{|l|l|l|l|l|l|l|l|l|l|l|l|l|}
					\hline
					Methods  &	Iterations & MSE   & SSIM &	Iterations & MSE   & SSIM&	Iterations & MSE   & SSIM \\\hline
					$\rho=0.2$ &\multicolumn{3}{c|}{Denoising (tol$=1.0$)} &\multicolumn{3}{c|}{Dejittering (tol$=0.3$)} &\multicolumn{3}{c|}{Denoising \& jitering (tol$=0.5$)} \\			\hline
					SO-TVF & $338 $ &  $ 92.68 $  &    $0.52  $ & $952  $ &  $762.72  $  &     $0.18  $& $ 584 $ &  $ 695.37 $   &  $ 0.25 $   \\	\hline
					TVF & $ 19189 $ &  $128.79  $   &  $  0.44$ & $ 47769 $ &  $ 779.74 $   &  $ 0.17 $& $ 31305 $ &  $ 714.68 $   &  $ 0.21 $   \\	\hline
					SO-MCF & $331  $ &  $ 89.39 $    &  $ 0.58 $ & $ 243 $ &  $ 739.07 $   &  $ 0.34 $& $242  $ &  $ 688.63 $    &  $0.31  $   \\	\hline
					MCF  & $4299  $ &  $105.95  $   &  $ 0.57 $ & $5614  $ &  $728.83  $    &  $ 0.35 $& $6952  $ &  $ 700.28 $   &  $0.32  $   \\	\hline
			\end{tabular}}
			\caption{Quantitative comparison (second-order flows here are with homogeneous initial velocity).}\label{tab:comparison}
		\end{center}
	\end{table}
	In Table \ref{tab:comparison}, we summarize the quantitative performance of the above visualized results using again standard computational measures MSE and SSIM.
	We only present the result using homogeneous initial velocity.
	The same conclusion as the visual comparisons can be made here.
	For the same set of experiments, using the same parameters $\rho$ and tol for the stopping criteria in Algorithm \ref{alg:symplectic}, the algorithms by MCFs produce either equally or better results as by TVFs for almost all the tasks, and the algorithm from second-order MCF shows better numerical efficiency than the one from the first-order MCF.

\section{Concluding remarks}
\label{sec:Conclusion}
This paper has studied two geometric quasilinear hyperbolic partial differential equations, namely the second-order total variation flow and the second-order level set mean curvature flow.
For the former equation, we have a relatively complete analytical result on its well-posedness, which is attributed to the convexity of the total variational functional. However, for the latter, we have only obtained a very preliminary result on its well-posedness  by considering a regularized version.
The main difficulty there comes from the degeneracy of the hyperbolic structure of the equation. Particularly, different to the former problem, there has no associated convex functional been found out for the latter one.
Instead, we identified some novel geometric PDEs evolving hypersurfaces to understand the behavior of the solution of the second-order level-set MCF. 
From an application point of view, we have observed that both second-order flows are able to generate efficient numerical algorithms for the motivating tasks in imaging sciences.
The two types of flows have different behaviors, and this has been verified by numerical examples. As a consequence, they have different strengths in our imaging applications. The TVFs are able to remove additive noise efficiently, but cannot properly treat displacement errors, while the MCFs seem to simultaneously deal with these two tasks, at least to some extent.
Based on the above observation, several interesting theoretical problems have been identified. For instance, the well-posedness of the original second-order level-set MCF \eqref{eq:acc_mcf_u}, and asymptotic analysis of its solution.
On the other hand, it would also be interesting to conduct a systematic investigation of the newly derived geometric PDE and its corresponding level sets equation pointed out in equations \eqref{eq:second_level_set} and  \eqref{eq:new_acc_mcf_u}, respectively.
We notice that applications for the respective first-order flows are quite rich in the literature, which are far beyond denoising and dejittering as we studied here. It remains an open problem to see whether the second-order flows can be generalized to take care of these applications as well. 
Moreover, inhomogeneous initial velocity and spatial dependent parameter $\eta$ for second-order dynamics seems to open interesting avenues for further investigation, particularly, in imaging applications.

\subsection*{Acknowledgement}
GD thanks M{\aa}rten Gulliksson for his hospitality when he was visiting \"Orebro University, and he also thanks Otmar Scherzer for some stimulating discussions.
GD and MH acknowledge support of Institut Henri Poincar\'e (IHP) (UMS 839 CNRS-Sorbonne Université), and LabEx CARMIN (ANR-10-LABX-59-01), while they both got funded for visiting `The Mathematics of Imaging' research trimester held at IHP in 2019, where a first version of this manuscript was finalized.
The authors are extremely grateful to the reviewers for their thorough and constructive comments which have significantly improved the paper, particularly on the suggestion of constructing an analytical solution for the second-order TV flow, discussed in Section \ref{subsec:Ana_solution_stvf} and the question on extinction time.

\appendix
\section{Convergence analysis of Algorithm \ref{alg:symplectic}}
Discrete images consist of pixels and are stored as matrices. For the convenience of  theoretical analysis of the algorithm, we represent the matrices by column vectors in the following. However, in practice, direct matrix operations can be performed and they are preferable in terms of computational efficiency.
\begin{definition}
	Given a matrix $\mathbf{u}\in \mathbb{R}^{M}\times \mathbb{R}^{N}$, one can obtain a vector $\vec{\mathbf{u}}\in \mathbb{R}^{MN}$ by stacking the columns of $\mathbf{u}$. This defines a linear operator $vec: \mathbb{R}^{M}\times \mathbb{R}^{N} \to \mathbb{R}^{MN}$, 	$\vec{\mathbf{u}} = vec(\mathbf{u})$, where
	\begin{equation*}\label{vec}
	 vec(\mathbf{u}) = (\mathbf{u}_{1,1}, \mathbf{u}_{2,1}, \cdot\cdot\cdot, \mathbf{u}_{M,1}, \mathbf{u}_{1,2}, \mathbf{u}_{2,2}, \cdot\cdot\cdot, \mathbf{u}_{M,2}, \cdot\cdot\cdot, \mathbf{u}_{1,N}, \mathbf{u}_{2,N}, \cdot\cdot\cdot, \mathbf{u}_{M,N})^\top.
	\end{equation*}
\end{definition}
Note that $vec(\mathbf{u})$ corresponds to a lexicographical column ordering of the components in the matrix $\mathbf{u}$. The symbol $array$ denotes the inverse of the $vec$ operator, i.e.,
\begin{equation*}\label{array}
array(vec(\mathbf{u})) = \mathbf{u} \; \text{ and }\;  vec(array(\vec{\mathbf{u}})) = \vec{\mathbf{u}},
\end{equation*}
whenever $\mathbf{u}\in \mathbb{R}^{M}\times \mathbb{R}^{N}$ and $\vec{\mathbf{u}}\in \mathbb{R}^{MN}$.
Denote by $\mathbf{u}^{k}$ the reconstructed image at iteration $k$. Then, based on the above definition and the discretization formula \eqref{linearApproximation0}, the right-hand side of our damped second-order flows, i.e.,
\[\abs{\nabla \mathbf{u}^{k}}\operatorname{div}\left(\frac{\nabla \mathbf{u}^{k}}{\abs{\nabla \mathbf{u}^{k}}+ \epsilon} \right) \quad  \text{ or } \quad  \operatorname{div}\left(\frac{\nabla \mathbf{u}^{k}}{\abs{\nabla \mathbf{u}^{k}}+ \epsilon} \right),\]
can be rewritten in an abstract matrix form as $\mathbf{F}^{k} \vec{\mathbf{u}}^{k}$, where the matrix $\mathbf{F}^{k}$ depends on $\vec{\mathbf{u}}^{k}$. The precise form of $\mathbf{F}^{k}$ is given in \eqref{eq:Fk}, where its spectral properties and their usage in convergence considerations are discussed.
For the sake of simplicity and clarity of statements, let us consider a uniform grid $\Omega_{MN}=\{(x_i,y_j)\}^{M,N}_{i,j=1}$, discretizing $\Omega$ with the uniform step size $h=x_{i+1}-x_i=y_{j+1}-y_j$. Define $\mathbf{u}(t)=[u(x_i,y_j,t)]^{M,N}_{i,j=1}$, and denote by $\mathbf{u}^{k}$ the projection of $u(x,y,t)$ onto the spatial grid $\Omega_{MN}$ and time point $t=t_k$. Denote by 
\[c^{\epsilon,k}_{i,j}(\mathbf{u}^{ k})= 1/ \left( \epsilon + \frac{1}{h} \sqrt{( \mathbf{u}^{k}_{i+1,j} -  \mathbf{u}^{k}_{i,j})^2 + ( \mathbf{u}^{k}_{i,j+1} -  \mathbf{u}^{k}_{i,j})^2}  \right).\]
Using forward-backward differences, we obtain
\begin{equation}\label{linearApproximation0}
\begin{aligned}
&\left[ \textmd{div}\left( c^{\epsilon,k}(\mathbf{u}^{ k}) \nabla \mathbf{u}^{k} \right) \right]_{i,j} 
=\frac{1}{h^2} \left( c^{\epsilon, k}_{i-1,j} \mathbf{u}^{k}_{i-1,j} +  c^{\epsilon, k}_{i,j-1} \mathbf{u}^{k}_{i,j-1}    \right) \\
&\qquad- \frac{1}{h^2} \left( 2 c^{\epsilon, k}_{i,j} + c^{\epsilon, k}_{i-1,j} + c^{\epsilon, k}_{i,j-1} \right) \mathbf{u}^{k}_{i,j} +  \frac{1}{h^2} \left(c^{\epsilon, k}_{i,j} \mathbf{u}^{k}_{i,j+1} +  c^{\epsilon, k}_{i,j} \mathbf{u}^{k}_{i+1,j}\right) .
\end{aligned}
\end{equation}
To put TVFs and MCFs under a common umbrella, we use $b(\mathbf{u}^{k}) \textmd{div}\left( c^{\epsilon,k}(\mathbf{u}^{ k}) \nabla \mathbf{u}^{k} \right)$ to represent the nonlinear part of the equations: $b(\mathbf{u}^{k})\equiv 1$ for TVFs, and 
\[b(\mathbf{u}^{k})=\abs{\nabla \mathbf{u}^{k}}=\left( \left[ \frac{1}{2h} \sqrt{( \mathbf{u}^{k}_{i+1,j} -  \mathbf{u}^{k}_{i-1,j})^2 + ( \mathbf{u}^{k}_{i,j+1} -  \mathbf{u}^{k}_{i,j-1})^2} \right]_{i,j} \right)\]
for MCFs. 
By applying lexicographical column ordering of $\mathbf{u}^{k}_{i,j}$ and assuming the Neumann boundary condition, we obtain $b(\mathbf{u}^{k}) \textmd{div}\left( c(\mathbf{u}^{\epsilon, k}) \nabla \mathbf{u}^{k} \right)$ in matrix representation, denoted as $\mathbf{F}^{k} \vec{\mathbf{u}}^{k}$, where  
\begin{equation}\label{eq:Fk}
\mathbf{F}^{k}=\mathbf{B}^{k} \mathbf{G}^{k}, \quad \mathbf{B}^{k}=\operatorname{diag}(b^k_{1,1}, b^k_{2,1}, \cdots, b^k_{M,1}, b^k_{1,2}, \cdots, b^k_{M,N}),
\end{equation}
and $\mathbf{G}^{k}$ is the $MN\times MN$ matrix with $N\times N$ block entries given by
\begin{equation}\label{eq:Gk}
\mathbf{G}^{k} = \left(
\begin{array}{cccccc}
L^k_1 &  I^k_1 & \mathbf{0} & \cdots & \mathbf{0} & \mathbf{0} \\
I^k_1 & L^k_2 & I^k_2  & \mathbf{0} & \ddots & \mathbf{0} \\
\mathbf{0} & I^k_2 & L^k_3  & I^k_3 & \mathbf{0} & \vdots \\
\vdots &  & \ddots  & \ddots & \ddots &  \\
\mathbf{0} &  \ddots & \mathbf{0}  & I^k_{N-2} & L^k_{N-1} & I^k_{N-1} \\
\mathbf{0} &  \mathbf{0} & \cdots  & \mathbf{0} & I^k_{N-1} & L^k_{N} \\
\end{array}
\right) .
\end{equation}
Here $I^k_j$ is the $M\times M$ diagonal matrix $I^k_j = \operatorname{diag}\left( c^{\epsilon, k}_{1,j}, \cdots, c^{\epsilon, k}_{M,j}\right)$, $\mathbf{0}$ represents the $M\times M$ zero matrix, and $L^k_j$ is the $M\times M$ matrix of the form
\begin{equation*}
L^k_j = \frac{1}{h^2} \left(
\begin{array}{cccccc}
-\tilde{c}^{\epsilon, k}_{1,j} & c^{\epsilon, k}_{1,j} & 0 & \cdots & 0 & 0 \\
c^{\epsilon, k}_{1,j} & -\tilde{c}^{\epsilon, k}_{2,j} & c^{\epsilon, k}_{2,j}  & 0 & \ddots & 0 \\
0 & c^{\epsilon, k}_{2,j}  & -\tilde{c}^{\epsilon, k}_{3,j}  & c^{\epsilon, k}_{3,j} & 0 & \vdots \\
\vdots &  & \ddots  & \ddots & \ddots &  \\
0 &  \ddots & 0  & c^{\epsilon, k}_{M-2,j} & -\tilde{c}^{\epsilon, k}_{M-1,j} & c^{\epsilon, k}_{M-1,j} \\
0 &  0 & \cdots  & 0 & c^{\epsilon, k}_{M-1,j} & -\tilde{c}^{\epsilon, k}_{M,j}  \\
\end{array}
\right),
\end{equation*}
where $\tilde{c}^{\epsilon, k}_{i,j}: = 2 c^{\epsilon, k}_{i,j} + c^{\epsilon, k}_{i-1,j} + c^{\epsilon, k}_{i,j-1}$.

\begin{proposition}\label{NegativeEigen}
	All eigenvalues of $\mathbf{G}^{k}$ (for all $k\in \N $) are non-positive.
\end{proposition}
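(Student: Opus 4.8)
The plan is to recognize $\mathbf{G}^{k}$ as a real \emph{symmetric, negative semi-definite} matrix -- the discrete analogue of the second-order elliptic operator $\mathbf{w}\mapsto\operatorname{div}(c^{\epsilon,k}(\mathbf{u}^{k})\nabla\mathbf{w})$ with Neumann boundary conditions and strictly positive diffusion coefficients $c^{\epsilon,k}_{i,j}$. Since the eigenvalues of a symmetric matrix are real and pinched by the Rayleigh quotient, it suffices to prove $\vec{\mathbf{w}}^{\top}\mathbf{G}^{k}\vec{\mathbf{w}}\le 0$ for every $\vec{\mathbf{w}}\in\R^{MN}$.

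First I would record that $\mathbf{G}^{k}$ is symmetric. In \eqref{eq:Gk} the off-diagonal blocks occur in transposed-equal pairs (the $(j,j{+}1)$ and $(j{+}1,j)$ blocks both equal the diagonal matrix $I^{k}_{j}$), and each $L^{k}_{j}$ is itself symmetric because, in the stencil \eqref{linearApproximation0}, the coefficient of $\mathbf{u}^{k}_{i+1,j}$ in the equation at $(i,j)$ and the coefficient of $\mathbf{u}^{k}_{i,j}$ in the equation at $(i{+}1,j)$ are both $c^{\epsilon,k}_{i,j}$; the analogous statement in the $y$-direction accounts for the blocks $I^{k}_{j}$. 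Hence $\mathbf{G}^{k}=(\mathbf{G}^{k})^{\top}$.

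Next, using \eqref{linearApproximation0}, I would rewrite the action of $\mathbf{G}^{k}$ in conservative forward--backward difference form,
\[
[\mathbf{G}^{k}\vec{\mathbf{u}}^{k}]_{i,j}=\frac{1}{h^{2}}\Big(D_{x}^{-}\big(c^{\epsilon,k}_{i,j}\,D_{x}^{+}\mathbf{u}^{k}_{i,j}\big)+D_{y}^{-}\big(c^{\epsilon,k}_{i,j}\,D_{y}^{+}\mathbf{u}^{k}_{i,j}\big)\Big),
\]
with $D_{x}^{+}v_{i,j}=v_{i+1,j}-v_{i,j}$, $D_{x}^{-}v_{i,j}=v_{i,j}-v_{i-1,j}$, and analogously in $y$, where the Neumann condition is encoded by requiring the outward fluxes $c^{\epsilon,k}D^{+}\mathbf{u}^{k}$ to vanish on $\partial\Omega_{MN}$ -- exactly the modification already built into the first and last rows of each $L^{k}_{j}$ and into the block pattern of \eqref{eq:Gk}. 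A discrete summation by parts then yields, for any grid function $\mathbf{w}$,
\[
\vec{\mathbf{w}}^{\top}\mathbf{G}^{k}\vec{\mathbf{w}}=-\frac{1}{h^{2}}\sum_{i,j}c^{\epsilon,k}_{i,j}\Big[(D_{x}^{+}\mathbf{w}_{i,j})^{2}+(D_{y}^{+}\mathbf{w}_{i,j})^{2}\Big],
\]
the sum extending over interior edges only. Because $c^{\epsilon,k}_{i,j}=1/\big(\epsilon+\tfrac1h\sqrt{(\mathbf{u}^{k}_{i+1,j}-\mathbf{u}^{k}_{i,j})^{2}+(\mathbf{u}^{k}_{i,j+1}-\mathbf{u}^{k}_{i,j})^{2}}\big)>0$ for all $i,j,k$, the right-hand side is $\le 0$, which gives the claim.

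I expect the only delicate point to be the bookkeeping of boundary terms in the summation by parts: one has to check that, with the Neumann discretization actually used to assemble the boundary rows of $L^{k}_{j}$, no boundary contribution survives -- equivalently, that $\mathbf{G}^{k}$ is weakly diagonally dominant with nonpositive diagonal, in which case Gershgorin's circle theorem also confines every eigenvalue to a disc contained in $(-\infty,0]$ and provides a convenient cross-check. Everything else is the standard discrete integration-by-parts identity. Note that the same computation shows constant grid functions lie in $\ker\mathbf{G}^{k}$, so $0$ is an eigenvalue and the spectrum of $\mathbf{G}^{k}$ is exactly contained in $(-\infty,0]$.
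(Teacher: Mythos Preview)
Your proof is correct but follows a different route from the paper. The paper's argument is shorter and more direct: it notes that $\mathbf{G}^{k}$ is symmetric (hence has real spectrum) and weakly diagonally dominant with non-positive diagonal entries, and then applies Gershgorin's circle theorem to trap every eigenvalue in a disc centred at a non-positive number with radius at most the absolute value of that number, i.e.\ in $(-\infty,0]$. What you mention only as a ``cross-check'' is in fact the entire proof in the paper.

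Your approach instead establishes negative semi-definiteness via the quadratic form: a discrete summation by parts turns $\vec{\mathbf{w}}^{\top}\mathbf{G}^{k}\vec{\mathbf{w}}$ into $-h^{-2}\sum c^{\epsilon,k}_{i,j}\big[(D_{x}^{+}\mathbf{w}_{i,j})^{2}+(D_{y}^{+}\mathbf{w}_{i,j})^{2}\big]\le 0$. This is more structural---it exhibits $-\mathbf{G}^{k}$ as the stiffness matrix of a weighted discrete Dirichlet form, yields the constant-kernel observation for free, and would extend more naturally if one later wanted quantitative spectral estimates---but it requires the boundary bookkeeping you flag, whereas the Gershgorin route sidesteps that entirely. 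Both arguments are standard and valid; the paper simply chose the quicker one.
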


\begin{proof}
	By the definition of $\mathbf{G}^{k}$, i.e. \eqref{eq:Gk}, it is not difficult to show that $\mathbf{G}^{k}$ is a symmetric and diagonally dominant matrix. Then, all eigenvalues of $\mathbf{G}^{k}$ (for all $k\in \N $)  are real and, by Gershgorin's circle theorem \cite{Ger31}, for each eigenvalue $\lambda$ there exists an index $\nu$ such that
	\begin{equation*}
	\lambda \in \left[ [\mathbf{G}^{k}]_{\nu,\nu} - \sum^{MN}_{\imath \neq \nu} |[\mathbf{G}^{k}]_{\nu,\imath}|, [\mathbf{G}^{k}]_{\nu,\nu} + \sum^{MN}_{\imath \neq \nu} |[\mathbf{G}^{k}]_{\nu,\imath}| \right],
	\end{equation*}
	which implies, by definition of the diagonal dominance, $\lambda\leq 0$. Here, $[\mathbf{G}^{k}]_{\nu,\imath}$ denotes the element of the matrix $\mathbf{G}^{k}$ at the position $(\nu,\imath)$.
\end{proof}

Denote $\vec{\mathbf{v}}^k=\frac{d \vec{\mathbf{u}}^k}{dt}$, and recall Algorithm \ref{alg:symplectic} where the symplectic Euler scheme is applied to discretize the second-order flow \eqref{eq:acc_tvf} or \eqref{eq:acc_mcf_u}, i.e.,
\begin{equation}\label{symplectic}
\left\{\begin{array}{l}
\vec{\mathbf{v}}^{k+1} =(1- \Delta t_k\eta) \vec{\mathbf{v}}^{k} + \Delta t_k   \mathbf{F}^{k} \vec{\mathbf{u}}^{k} , \\
\vec{\mathbf{u}}^{k+1} = \vec{\mathbf{u}}^{k} + \Delta t_k \vec{\mathbf{v}}^{k+1}, \\
\vec{\mathbf{u}}_{0}=\vec{\mathbf{u}}^d, \vec{\mathbf{v}}_0=0,
\end{array}\right.
\end{equation}
where $\vec{\mathbf{u}}^d= vec(\mathbf{u}^d)$ and $\mathbf{u}^d$ is the project of $u^d(x)$ onto the grid $\Omega_{MN}$.

Now, we are in a position to give a numerical analysis for the scheme \eqref{symplectic} in Algorithm \ref{alg:symplectic}.

\begin{theorem}\label{ThmFixDamping}
	Let  $\eta>0$ be a fixed damping parameter. If the step size is chosen to fulfill
	\begin{equation}\label{eq:parametersDtSteady}
	\Delta t_k \leq \min \left( \frac{1}{\eta} , \frac{1}{\sqrt{b^{k}_{max} \lambda^{k}_{max}}} \right),
	\end{equation}
	where $\lambda^{k}_{max}$ is the maximal eigenvalue of $-\mathbf{F}^{k}$, and $b^{k}_{max}:= \max^{M,N}_{i,j=1} b^k_{i,j}$, then the scheme \eqref{symplectic} is convergent.
\end{theorem}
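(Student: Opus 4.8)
Scheme \eqref{symplectic} is the symplectic (semi-implicit) Euler discretization of $\ddot{\vec{\mathbf{u}}}+\eta\dot{\vec{\mathbf{u}}}=\mathbf{F}^{k}\vec{\mathbf{u}}$ with the matrix $\mathbf{F}^{k}=\mathbf{B}^{k}\mathbf{G}^{k}$ frozen (lagged) at the current iterate. I read ``convergent'' as: the iterates $(\vec{\mathbf{u}}^{k},\vec{\mathbf{v}}^{k})$ stay bounded and converge, and I would prove it by constructing a discrete energy that mimics the continuous entropy $\mathcal{E}(t)=\tfrac12\norm{\dot u(t)}^2+\Phi(u(t))$ from the proof of Proposition \ref{acceleration}. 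The first ingredient is spectral: by Proposition \ref{NegativeEigen} the symmetric matrix $\mathbf{G}^{k}$ is negative semi-definite, and since $\mathbf{B}^{k}$ is a positive diagonal matrix, $-\mathbf{F}^{k}$ is self-adjoint and positive semi-definite for the weighted inner product $\langle\cdot,\cdot\rangle_{(\mathbf{B}^{k})^{-1}}$, with spectrum in $[0,\lambda^{k}_{max}]$; in the standard inner product this gives $\langle\vec{\mathbf{w}},(-\mathbf{F}^{k})\vec{\mathbf{w}}\rangle\le b^{k}_{max}\lambda^{k}_{max}\norm{\vec{\mathbf{w}}}^2$. Also, for the Neumann discretization $\mathbf{G}^{k}\mathbf{1}=0$, so $\vec{\mathbf{v}}^{k}$ and $\vec{\mathbf{u}}^{k}$ keep constant spatial means (exactly for the TVF case $b\equiv 1$, up to a controllable correction for the MCF case), which I need in order to turn a seminorm bound into a genuine $L^2$ bound via a discrete Poincar\'e inequality.

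Next I would introduce a ``mixed-potential'' (leapfrog-type) discrete energy,
\[
\mathcal{E}_{k}:=\tfrac12\norm{\vec{\mathbf{v}}^{k+1}}^2+\tfrac12\big\langle\vec{\mathbf{u}}^{k+1},(-\mathbf{F}^{k})\vec{\mathbf{u}}^{k}\big\rangle,
\]
with both terms understood in the $(\mathbf{B}^{k})^{-1}$-weighted inner product in the MCF case, and I would replace the quadratic potential $\tfrac12\langle\vec{\mathbf{w}},(-\mathbf{G}^{k})\vec{\mathbf{w}}\rangle$ by the true convex discrete total variation / perimeter functional $\Phi_{h}$, using the elementary half-quadratic inequality $\sqrt{a}\le\tfrac12(a/\sqrt{b}+\sqrt{b})$: this produces a convex majorant $Q_{k}$ of $\Phi_{h}$ which touches $\Phi_{h}$ at $\vec{\mathbf{u}}^{k}$ (up to an $O(\epsilon)$ term) and whose gradient $-\mathbf{G}^{k}\vec{\mathbf{u}}^{k}$ is exactly the operator driving the step. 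Writing $\langle\vec{\mathbf{u}}^{k+1},(-\mathbf{F}^{k})\vec{\mathbf{u}}^{k}\rangle=\tfrac12\langle\vec{\mathbf{u}}^{k},(-\mathbf{F}^{k})\vec{\mathbf{u}}^{k}\rangle+\tfrac12\langle\vec{\mathbf{u}}^{k+1},(-\mathbf{F}^{k})\vec{\mathbf{u}}^{k+1}\rangle-\tfrac{\Delta t_{k}^2}{2}\langle\vec{\mathbf{v}}^{k+1},(-\mathbf{F}^{k})\vec{\mathbf{v}}^{k+1}\rangle$ shows that $\mathcal{E}_{k}\ge 0$ precisely when $\Delta t_{k}^2\langle\vec{\mathbf{v}}^{k+1},(-\mathbf{F}^{k})\vec{\mathbf{v}}^{k+1}\rangle\le\norm{\vec{\mathbf{v}}^{k+1}}^2$, which is exactly what the CFL bound $\Delta t_{k}^2 b^{k}_{max}\lambda^{k}_{max}\le 1$ in \eqref{eq:parametersDtSteady} guarantees.

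Then the core estimate: inserting the two update relations into $\mathcal{E}_{k+1}-\mathcal{E}_{k}$, the symplectic cancellation removes the mixed term $\langle\vec{\mathbf{v}}^{k+1},\mathbf{F}^{k}\vec{\mathbf{u}}^{k}\rangle$; the change of frozen coefficients from $\mathbf{F}^{k}$ to $\mathbf{F}^{k+1}$ is absorbed by the majorant inequality $\Phi_{h}(\vec{\mathbf{u}}^{k+1})\le Q_{k}(\vec{\mathbf{u}}^{k+1})$; and the damping contributes $\tfrac12\big((1-\Delta t_{k}\eta)^2-1\big)\norm{\vec{\mathbf{v}}^{k}}^2$, which is $\lesssim-\Delta t_{k}\eta\norm{\vec{\mathbf{v}}^{k}}^2\le 0$ once $\Delta t_{k}\le 1/\eta$ forces $1-\Delta t_{k}\eta\in[0,1)$. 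The outcome should be $\mathcal{E}_{k+1}\le\mathcal{E}_{k}-c\,\Delta t_{k}\norm{\vec{\mathbf{v}}^{k+1}}^2$ (up to an inessential $O(\epsilon)$ regularization term) for a fixed $c>0$. Hence $(\mathcal{E}_{k})$ is bounded below and essentially non-increasing, so it converges; this bounds $\vec{\mathbf{v}}^{k}$ and $\langle\vec{\mathbf{u}}^{k},(-\mathbf{G}^{k})\vec{\mathbf{u}}^{k}\rangle$, hence — with the conserved mean and discrete Poincar\'e — bounds $\vec{\mathbf{u}}^{k}$, i.e.\ the scheme does not blow up (the stability content). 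Telescoping yields $\sum_{k}\Delta t_{k}\norm{\vec{\mathbf{v}}^{k+1}}^2<\infty$, so $\vec{\mathbf{v}}^{k}\to 0$ and $\norm{\vec{\mathbf{u}}^{k+1}-\vec{\mathbf{u}}^{k}}=\Delta t_{k}\norm{\vec{\mathbf{v}}^{k+1}}\to 0$; every accumulation point of $(\vec{\mathbf{u}}^{k})$ is then a discrete steady state $\mathbf{F}(\vec{\mathbf{u}})\vec{\mathbf{u}}=0$, and a finite-dimensional LaSalle / \L{}ojasiewicz-type argument upgrades this to convergence of the full sequence.

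\textbf{The main obstacle.} The essential difficulty is that $\mathbf{F}^{k}$ is re-frozen at every iteration, so the ``potential'' is not a fixed quadratic form; the symplectic-Euler energy identity has to be interlocked with a monotonicity property of the nonlinear functional, which is why the convexity of $\Phi_{h}$ (through the half-quadratic majorant) is indispensable. For the level-set MCF this is genuinely delicate because, as already noted in Section \ref{SecHeuristics}, no convex energy in $u$ is available — one is forced to work in the $k$-dependent weighted inner product $\langle\cdot,\cdot\rangle_{(\mathbf{B}^{k})^{-1}}$, and the mismatch between the metrics at consecutive steps is exactly what the extra factor $b^{k}_{max}$ in \eqref{eq:parametersDtSteady} pays for (consistent with the heuristic that $\eta$ must be large enough to stabilize the MCF iteration). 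A secondary point is the passage from boundedness/stability to genuine convergence of the sequence, which in the non-quadratic case requires the LaSalle/\L{}ojasiewicz step rather than a direct contraction argument.
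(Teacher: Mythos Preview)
Your approach is \emph{genuinely different} from the paper's, and in several respects more careful.  The paper does not build a discrete Lyapunov function at all.  Instead it writes one step of \eqref{symplectic} as a linear map on the phase vector $\mathbf{z}_{k}=(\vec{\mathbf{v}}^{k};\vec{\mathbf{u}}^{k})$,
\[
\mathbf{z}_{k+1}=A_{k}\mathbf{z}_{k},\qquad
A_{k}=\begin{pmatrix} I+\Delta t_{k}^{2}\mathbf{F}^{k} & \Delta t_{k}(1-\Delta t_{k}\eta)I\\ \Delta t_{k}\mathbf{F}^{k} & (1-\Delta t_{k}\eta)I\end{pmatrix},
\]
diagonalises $-\mathbf{G}^{k}=Q\Lambda^{k}Q^{\top}$, and computes the $2MN$ eigenvalues of $A_{k}$ in closed form,
\[
\mu^{k}_{i,\pm}=1-\tfrac{\Delta t_{k}}{2}\Big[(\Delta t_{k}b^{k}_{i}\lambda^{k}_{i}+\eta)\pm\sqrt{(\Delta t_{k}b^{k}_{i}\lambda^{k}_{i}+\eta)^{2}-4b^{k}_{i}\lambda^{k}_{i}}\Big].
\]
It then shows case-by-case (over\-damped / under\-damped / critically damped) that $|\mu^{k}_{i,\pm}|\le 1$ under \eqref{eq:parametersDtSteady}, declares $A_{k}$ ``contractive'', and concludes.

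What each route buys.  The paper's spectral argument is short and gives a transparent pairing between the two clauses of \eqref{eq:parametersDtSteady} and the two mechanisms (the $\eta\Delta t_{k}\le 1$ clause controls the complex/underdamped modes via $|\mu|=\sqrt{1-\eta\Delta t_{k}}$, the CFL clause $\Delta t_{k}^{2}b^{k}_{max}\lambda^{k}_{max}\le 1$ controls the real/overdamped modes).  On the other hand it is somewhat formal: the eigenvalue formula tacitly treats $\mathbf{B}^{k}$ and $\mathbf{G}^{k}$ as simultaneously diagonalisable, spectral radius $\le 1$ for a non-normal $A_{k}$ does not give $\|A_{k}\|_{2}\le 1$, and no argument is offered for the product $\prod_{k}A_{k}$ with varying $A_{k}$.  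Your energy/majorant route is heavier but addresses exactly those points: the $(\mathbf{B}^{k})^{-1}$-weighted inner product is the correct setting in which $-\mathbf{F}^{k}$ is self-adjoint, the half-quadratic majorant is what lets you pass from $\mathbf{F}^{k}$ to $\mathbf{F}^{k+1}$ without assuming commutation, and the LaSalle step is what converts ``non-expansive'' into ``convergent''.  The one place where your plan is still soft is the MCF case: since the weight $(\mathbf{B}^{k})^{-1}$ itself changes with $k$, the $\mathcal{E}_{k+1}-\mathcal{E}_{k}$ identity picks up an extra metric-change term that is not obviously controlled by the $b^{k}_{max}$ factor alone --- you should expect to need a bound on how fast $\mathbf{B}^{k}$ varies (equivalently, a bound on $|\nabla u^{k+1}|/|\nabla u^{k}|$), which neither your sketch nor the paper supplies.
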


\begin{proof}
	
	Denote $\mathbf{z}_k=(\mathbf{v}^{k};\mathbf{u}^{k})$, and rewrite equation \eqref{symplectic} by
	\begin{equation}\label{ODE2Steady}
	\mathbf{z}_{k+1} = A_k \mathbf{z}_k,
\quad
\text{ 	where }\quad	A_k  = \left(
	\begin{array}{cc}
	I_n + \Delta t_k^2 \mathbf{F}^{k} & \Delta t_k \left( 1 - \Delta t_k \eta \right) I_n \\
	\Delta t_k \mathbf{F}^{k} &  \left( 1 - \Delta t_k \eta \right) I_n
	\end{array}
	\right).
	\end{equation}	
	It is well-known that a sufficient condition for the convergence of the iteration scheme \eqref{ODE2Steady} is that $A_k$ is a contractive operator, i.e., $\|A_k\|_2<1$.
	By the elementary calculation and the decomposition $-\mathbf{G}^{k}=Q \Lambda^{k} Q^\top$ with $\Lambda^{k}=\operatorname{diag}(\lambda^{k}_{i}), \lambda^{k}_{i}\geq0, i =1,...,MN$, we derive that the eigenvalues of $A_k$ are
	\begin{equation*}\label{Spectral}
	\mu^{k}_{i,\pm} = 1- \frac{\Delta t_k}{2} \left[ \left( \Delta t_k b^{k}_i \lambda^{k}_i + \eta \right) \pm  \sqrt{\left( \Delta t_k b^{k}_i \lambda^{k}_i + \eta \right)^2 - 4 b^{k}_i\lambda^{k}_i} \right] \quad i=1,2,\cdots, MN,
	\end{equation*}
	where $b^{k}_i\geq0$ represents the $i$-th element in the diagonal of matrix $\mathbf{B}^{k}$.
	Hence, in order to prove $\|A_k\|_2 \leq1$, it is sufficient to show that for all $i=1, ..., MN$: $|\mu^{k}_{i,\pm}| \leq1$ for the time step size $\Delta t_k$, defined in \eqref{eq:parametersDtSteady}.
	
	For each fixed $i$, there are three possible cases: 
(i)	$ \left( \Delta t_k b^{k}_i \lambda^{k}_{i} + \eta \right)^2 > 4 b^{k}_i \lambda^{k}_{i}$ overdamped case;
(ii) $\left( \Delta t_k b^{k}_i \lambda^{k}_{i} + \eta \right)^2 < 4 b^{k}_i \lambda^{k}_{i}$ underdamped case;
(iii)  $ \left( \Delta t_k b^{k}_i \lambda^{k}_{i} + \eta \right)^2 = 4 b^{k}_i \lambda^{k}_{i}$ critical damping case.	
We consider each case separately.

	For the overdamped case, define $a:= \frac{\eta + \Delta t_k b^{k}_i \lambda^{k}_{i}}{2 \sqrt{b^{k}_i \lambda^{k}_{i}}}$ ($a>1$). Then,
$	\mu^{k}_{i,\pm} = 1- \Delta t_k \sqrt{b^{k}_i \lambda^{k}_{i}} (a\pm \sqrt{a^2-1}).$
	Obviously, $\mu^{k}_{i,\pm}<1$ by noting the positivity of the second term on the right-hand side of the equation above. Now, let us show the inequality $\mu^{k}_{i,\pm}>-1$. By the choice of the time step size $\Delta t_k$ in \eqref{eq:parametersDtSteady}, we know that $\eta\Delta t_k<1$ and $b^{k}_{max} \lambda^{k}_{max}\Delta t^2_k <1$, which implies that $\eta\Delta t_k + b^{k}_{max} \lambda^{k}_{max}\Delta t^2_k \leq 2$. Therefore, we have
	\begin{equation}\label{bounded1Steady}
	\frac{2}{\eta + b^{k}_{max} \lambda^{k}_{max}\Delta t_k} \geq \Delta t_k.
	\end{equation}
	
	Since $a =\frac{\eta}{2 \sqrt{b^{k}_i\lambda^{k}_{i}}} + \frac{\Delta t_k}{2} \sqrt{b^{k}_i\lambda^{k}_{i}}$, using inequality \eqref{bounded1Steady}, we deduce that
	\begin{equation*}
	\begin{aligned}
	&\frac{2}{\sqrt{b^{k}_i\lambda^{k}_{i}} \left( a\pm\sqrt{a^2-1} \right)} \geq  \frac{2}{\sqrt{b^{k}_i\lambda^{k}_{i}} \left( a+\sqrt{a^2-1} \right)} > \frac{1}{\sqrt{b^{k}_i\lambda^{k}_{i}} a}\\
	 = &\frac{1}{\sqrt{b^{k}_i\lambda^{k}_{i}} \left( \frac{\eta}{2 \sqrt{b^{k}_i\lambda^{k}_{i}}} + \frac{\Delta t_k}{2} \sqrt{b^{k}_i\lambda^{k}_{i}} \right)} = \frac{2}{\eta+ b^{k}_i\lambda^{k}_{i} \Delta t_k }  \geq \frac{2}{\eta+ b^{k}_{max} \lambda^{k}_{max} \Delta t_k } \geq \Delta t_k,
	\end{aligned}
	\end{equation*}
	which implies that $\mu^{k}_{i,\pm} = 1- \Delta t_k \sqrt{b^{k}_i\lambda^{k}_{i}} (a\pm \sqrt{a^2-1}) > -1. $
	Therefore, we conclude that $|\mu^{k}_{i,\pm}|\leq1$ for the overdamped case.
	
	Now, consider the underdamped case. In this case, since $\left( \Delta t_k b^{k}_i \lambda^{k}_{i} + \eta \right)^2 < 4 b^{k}_i \lambda^{k}_{i}$, we have
	\begin{equation*}
	| \mu^{k}_{i,\pm} | = \left|  1- \frac{\Delta t_k}{2} \left( \Delta t_k b^{k}_i \lambda^{k}_i + \eta \right) \pm  i \frac{\Delta t_k}{2} \sqrt{4 b^{k}_i \lambda^{k}_i -\left( \Delta t_k b^{k}_i \lambda^{k}_i + \eta \right)^2}   \right| = \sqrt{1- \eta \Delta t_k} .
	\end{equation*}
	which implies $| \mu^{k}_{i,\pm} |<1$ for any fixed pair $(\eta,\Delta t_k)$ satisfying \eqref{eq:parametersDtSteady}.
	
	Finally, consider the critical damping case. In this case, the eigenvalue for $\mu^{k}_{i,\pm}$ is simply given by
	\begin{equation*}
	| \mu^{k}_{i,\pm} | = |1- \Delta t_k \sqrt{b^{k}_i \lambda^{k}_{i}}| = \sqrt{1- \eta \Delta t_k},
	\end{equation*}
	which yields the desired result according to the argument in the critical damping case.
	In conclusion, all the eigenvalues of matrix $A_k$ are smaller or equal than $1$, therefore it is a contractive operator. Then the scheme \eqref{symplectic} is convergent.
\end{proof}

%\section*{Bibliography}

\bibliographystyle{siamplain}

\end{document}